\theoremstyle{plain} 
\newtheorem{thm}{Theorem}[section]
\newtheorem{cor}[thm]{Corollary}
\newtheorem{lem}[thm]{Lemma}
\newtheorem{prop}[thm]{Proposition}
\newtheorem{exam}[thm]{Example}
\newtheorem{defn}[thm]{Definition}
\newcommand{\thmref}[1]{Theorem~\ref{#1}}
\newcommand{\lemref}[1]{Lemma~\ref{#1}}
\newcommand{\propref}[1]{Propsition~\ref{#1}}
\theoremstyle{remark}
\newtheorem{rem}[thm]{Remark}
\numberwithin{equation}{section}
\def\df{\displaystyle \frac}
\def\la{{\langle}}
\def\ra{{\rangle}}
\def\f{\frac}
\def\vi{\varphi}
\def\({\left(}
\def \){ \right)}
\def\Bl{\Bigl}
\def\Br{\Bigr}
 \def\tr{{\triangle}}
\def\ta{\theta}
\def\al{{\alpha}}
\def\be{{\beta}}
\def\da{{\delta}}
\def\sa{{\sigma}}
 \def\a{{\alpha}}
 \def\b{{\beta}}
 \def\g{{\gamma}}
  \def\ga{{\gamma}}
 \def\k{{\kappa}}
 \def\t{{\theta}}
 \def\l{{\lambda}}
 \def\d{{\delta}}
 \def\o{{\omega}}
 \def\Og{{\Omega}}
 \def\s{{\sigma}}
 \def\va{\varepsilon}
 \def\BB{{\mathbb B}}
 \def\NN{{\mathbb N}}
 \def\RR{{\mathbb R}}
 \def\SS{{\mathbb S}}
 \def\TT{{\mathbb T}}
 \def\ZZ{{\mathbb Z}}
    \def\osc{\operatorname{osc}}
  \def\sph{\mathbb{S}^{d-1}}
\def\conv{\text{conv}}
 \def\dist{\mathtt{d}}
\newcommand{\wt}{\widetilde}
\newcommand{\wh}{\widehat}
\def\sub{\substack}
\DeclareMathOperator{\Arc}{Arc}
\def\conv{\text{conv}}
\def\diam{\text{diam}}
\def\p{\partial}
\def\ld{\lambda}
\def\bl{\bigl}
\def\br{\bigr}
\def\og{\omega}
\def\Ld{\Lambda}
\begin{document}

\title[]{
  Chebyshev-type cubature formulas for doubling weights on  spheres, balls and simplexes}

\author{Feng Dai}
\address{Department of Mathematical and Statistical Sciences\\
University of Alberta\\ Edmonton, Alberta T6G 2G1, Canada.}
\email{fdai@ualberta.ca}

\author{Han Feng}
\address{Department of Mathematics,
University of Oregon, Eugene OR 97403-1222, USA.}
\email{hfeng3@uoregon.edu}

\thanks{This work  was  supported
 by  NSERC  Canada under
grant  RGPIN 04702.  It was  conducted when the second author was a Ph.D student at the University of Alberta.}

\date{July 12, 2017}
\keywords{ Chebyshev-type cubature formulas for doubling weights;    Spherical designs;   Spherical harmonics;    Convex  partitions of the unit spheres. }
\subjclass{41A55, 41A63, 52C17, 52C99,
 65D32}
\begin{abstract}


   This paper shows  that  given a doubling weight $w$ on  the unit sphere ${\mathbb{S}^{d-1}}$  of $\mathbb{R}^d$
   there exists a positive constant $K_{w,d}$
    such that  for each  positive integer  $n$ and
         each integer $N\ge   \max_{x\in {\mathbb{S}^{d-1}}} \frac {K_{w,d}} {w(B(x, n^{-1}))}$,  there exists a set of   $N$ distinct nodes $z_1,\cdots, z_N$  on ${\mathbb{S}^{d-1}}$  for which
       $$ \frac 1{w({\mathbb{S}^{d-1}})} \int_{{\mathbb{S}^{d-1}}} f(x) w(x)\, d\sigma_d(x)=\frac 1N \sum_{j=1}^N f(z_j),\   \  \forall f\in\Pi_n^d, \leqno{(\ast)} $$
    where $d\sigma_d$,  $B(x,r)$ and $\Pi_n^d$  denote  the  surface Lebesgue measure  on ${\mathbb{S}^{d-1}}$,  the spherical cap with center $x\in\sph$ and radius $r>0$, and the space of all spherical polynomials of degree at most $n$ on ${\mathbb{S}^{d-1}}$, respectively, and  $w(E)=\int_E w(x) \, d\sigma_d(x)$ for $E\subset {\mathbb{S}^{d-1}}$.
 If, in addition,  $w\in L^\infty({\mathbb{S}^{d-1}})$, then the above  set of  nodes can be chosen to be well separated:
$$\min_{1\leq i\neq j\leq N}\arccos(z_i\cdot z_j)\geq c_{w,d} N^{-\frac 1{d-1}}>0.$$
 It is further proved that the minimal number of nodes    $\mathcal{N}_{n} (wd\s_d)$  required in   ($\ast$)
  for a doubling weight $w$ on ${\mathbb{S}^{d-1}}$ satisfies
  $$ \mathcal{N}_n (wd\sigma_d) \sim \max_{x\in {\mathbb{S}^{d-1}}} \frac 1 {w(B(x, n^{-1}))},\   \  n=1,2,\cdots.$$
Proofs of these results rely on  new convex partitions of ${\mathbb{S}^{d-1}}$ that are regular with respect to a given weight $w$ and  integer $N$. Similar results are also established  on the unit ball and the standard simplex of $\mathbb{R}^d$.

   Our results  extend  the  recent  results of Bondarenko, Radchenko, and Viazovska  on spherical designs.

\end{abstract}
  \maketitle

\section{Introduction}
  A Chebyshev-type cubature  formula (CF)  of degree $n$ for a positive Borel   measure $\mu$ on a compact subset   $\Og$ of $\RR^d$ consists of finitely many nodes $z_1, \cdots, z_N$ in $\Og$ (which may repeat)  such that the  numerical integration formula
 \begin{equation}\label{1-1}
  \f 1{\mu(\Og)} \int_{\Og} P(x)\, d\mu(x)=\f 1N \sum_{j=1}^N P(z_j)
 \end{equation}
  holds for every  algebraic  polynomial $P$ of total degree at most $n$ in $d$ variables, where   $N$ is called the size of the formula.
       A  Chebyshev-type CF is called strict if  all the   nodes $z_j$ are distinct.
    We denote by   $ \mathcal{WN}_{n,\Og}(d\mu)$  (resp.,   $\mathcal{N}_{n,\Og}(d\mu)$)
  the minimal size  of the   Chebyshev-type CF  (resp.,  the  strict Chebyshev-type CF) of degree $n$
  for the  measure $\mu$ on $\Og$. We will drop the  subscript $\Og$ here   whenever the underlying domain $\Og$ is easily understood from the context and no confusion is possible.

    A central question concerning the Chebyshev-type CF  is
     to find  sharp asymptotic estimates of  the quantity  $\mathcal{N}_{n,\Og}(d\mu)$ or $\mathcal{WN}_{n,\Og}(d\mu)$  as $n\to\infty$. Study of this question starts with
    the classical work  of Bernstein \cite{Bern1, Bern2} who shows    that   $\mathcal{WN}_n(dx)\sim n^2$ for the Lebesgue measure $dx$ on the interval $[-1,1]$.  Here and throughout the paper,  the notation $a_n\sim b_n$ means that $c_1 a_n \leq b_n\leq c_2 a_n$ for some positive constants $c_1, c_2$   independent of $n$ (called constants of equivalence).

    Bernstein's methods  have been extended and  developed   in a series of  papers  of    Kuijlaars  (see, for instance,  \cite{Kui93, Kui95-1,Kui95-2,Kui95-3,Kui95-4}),  who,
 in particular,   proves   that  for   the Jacobi measures $d\mu_{\al,\be}(t)=(1-t)^\al(1+t)^\be \, dt$ on  $[-1,1]$  with nonnegative parameters  $\al, \be\ge 0$,
 \begin{equation}\label{1-2-intr}
    \mathcal{N}_n( d\mu_{\a,\b})\sim n^{2+2\max\{\a,\b\}},\   \   \  n=1,2,\cdots.
 \end{equation}
 Kuijlaars \cite{Kui93, Kui95-1} also noticed that his techinique in general does not work for the Jacobi measures $d\mu_{\al,\be}(t)$ with negative  parameters $\al,\be>-1$, although he    was able to    prove  a stronger result in \cite{Kui95-5} implying  the estimate  \eqref{1-2-intr}  for  the case of  $\al=-\f12$,    $-\f12-\ld_0\leq \be <-\f12$ and   some positive constant $\ld_0$.   The  estimate  \eqref{1-2-intr} was proved  recently  by  Kane\cite{Kane}  for   $\a,\b\geq -\f 12 $, and by  Gilboa and Peled \cite{GilPel} for the general case of  $\a,\b>-1$. The very interesting  work  of  Gilboa and Peled \cite{GilPel} also  uses the method of Kane\cite{Kane}  to establish sharp bounds on the size of the Chebyshev-type CFs in one dimension for all doubling weights  with  some excellent discussions on non-doubling case.
  For more information on  the Chebyshev-type CFs in one variable,    we refer to  \cite{Fo1, Fo2, Gaut, Kor, Peled} and references within.

For the Chebyshev-type CFs in several variables, the most well studied case is that of spherical designs, introduced by   Delsarte, Goethals, and Seidel \cite{DGS} in 1977.
Let $\sph:=\{x\in\RR^d:\  \ \|x\|=1\}$  denote the  unit sphere of  $\RR^d$ equipped with the usual surface Lebesgue measure $d\s_d(x)$, where $\|\cdot\|$ denotes the Euclidean norm.
A spherical $n$-design  on $\sph$  is a
Chebyshev-type CF of degree $n$ for the  measure $d\s_d(x)$ on $\sph$. It is not difficult to show a spherical $n$-design on $\sph$ must have size at least $c_d n^{d-1}$; that is, $\mathcal{N}_n(d\s_{d})\ge c_d n^{d-1}$,
(see, for instance, \cite{DGS}).  It was conjectured by Korevaar and Meyers \cite{KoMe}  that
$\mathcal{N}_n(d\s_{d})\leq  c_d n^{d-1}$.
Much work had been done towards this conjecture (see \cite{Kane} and the references thererin). This conjecture   was recently confirmed  in the breakthrough work of Bondarenko, Radchenko, and Viazovska \cite{Bond1, Bond2},
which shows  that there exist  positive constants $K_d$ and $c_d$ depending only on $d$  such that given  every integer $N>K_d n^{d-1}$ there exists a  spherical $n$-design  consisting of $N$ distinct nodes  $z_1, \cdots, z_N\in\sph$ with
$\min_{1\leq i\neq j\leq N}\dist(z_i, z_j)\ge c_d N^{-\f 1{d-1}}$.


In  the above mentioned  interesting  paper  \cite{Kane},
  Kane develops  techniques  different from those of \cite{Bond1, Bond2} to
  establish bounds on the size of Chebyshev-type  CFs on   rather general path-connected topological spaces.
  In particular, his techniques can be applied to prove the existence of spherical $n$-designs on  $\sph$ of size $O_d( n^{d-1} (\log n)^{d-2})$, which is   only slightly worse  than the asymptotically optimal estimate  $O_d(n^{d-1})$.
  In the very general setting of path-connected topological spaces, the work of Kane \cite{Kane}  proves the existence of the Chebyshev-type  CFs
  whose size is
roughly the square of   the optimal size   conjectured in \cite{Kane}.
 Using the method of \cite{Bond1}, Etayo,  Marzo and Ortega-Cerd\`{a} \cite{Eta} establish  asymptotically optimal bounds on the size of the Chebyshev-type  CFs
 on compact algebraic manifolds,  confirming  Kane's  conjecture in certain sense for this specific setting.

  Some earlier related works  regarding the  Chebyshev-type  CFs  on  certain multi-variate domains can be found in the papers of  Kuperberg \cite{Ku1, Ku2, Ku3}.
     One can also find some interesting results   on Chebyshev-type CFs on discrete spaces such as  combinatorial designs and Hadamard matrices,  in  \cite{KLP} and the references therein.

 One of the main purposes in this paper is to  extend the methods of Bondarenko, Radchenko, and Viazovska \cite{Bond1, Bond2} to determine the asymptotically optimal bounds on the size of  the Chebyshev-type CFs  for   doubling weights   on  the unit sphere $\sph$ and other related  domains. Our results are mainly  for  the case of more variables, whereas  the results in one variable were mostly established  in the recent paper \cite{GilPel}.

 Let us start with  some necessary notation.
   Denote by $\Pi_n^d$  the space  of all real algebraic polynomials   of total  degree at most $n$ in $d$ variables:
 $$\Pi_n^d:=\text{span}\Bl\{x^\al=x_1^{\al_1}\cdots x_d^{\al_d}:\   \   \al=(\al_1,\cdots, \al_d)\in\NN_0^d, \  \ |\al|:=\sum_{j=1}^d \al_j\leq n\Br\},$$  where $\NN_0$ denotes the set of all nonnegative integers.
Let $\dist(x,y)$ denote the geodesic distance on $\sph$; that is, $\dist(x, y):=\arccos (x\cdot y)$ for $  x, y\in\sph$.
  Denote by $B(x,r)$ the spherical cap $\{ y\in\sph:\  \ \dist(x,y)\leq r\}$ with center $x\in\sph$ and radius $r>0$.
 A weight function $w$ on $\sph$ (i.e., a nonnegative integrable function on $\sph$ )  is said to satisfy the doubling condition if there exists a positive constant $L$ such that
$$w(B(x, 2r)) \leq L w(B(x,r))\   \ \forall x\in\sph,\   \  \forall r>0,$$ where we write  $w(E):=\int_E w(x)\, d\s_{d}(x)$ for $E\subset \sph$,  and
the least constant $L=L_w$ is called the doubling constant of $w$.   Given a doubling weight $w$ on $\sph$, there exists a constant $d-1\leq s_w \leq \f {\log L_w}{\log 2}$ such that
\begin{equation}\label{1-5-2017}
  w(B(x, \ld t))\leq 2 \ld^{s_w} w(B(x,t)),\    \  \forall \ld>1,\   \  \forall t>0,\    \   \forall x\in\sph.
\end{equation}

Many of  the weights
that appear in analysis on $\sph$  satisfy the doubling condition;  in
particular,  all weights of the form
\begin{equation}\label{1-3} w_\al (x) = \prod_{j=1}^d|x_j|^{\al_j},\     \ x\in\sph,\ \  \al=( \al_1,\cdots, \al_d)\in (-1, \infty)^d.\end{equation}

   In this paper, we will prove the following weighted extension of  the result of Bondarenko, Radchenko, and Viazovska \cite{Bond1, Bond2} on spherical designs:

\begin{thm}\label{sphdesign201}
  Let $w$ be a  doubling weight on $\sph$  normalized by $w(\sph)=1$.  Then there exists a positive constant $K_w$ depending only on the doubling constant of $w$ and the dimension $d$ such that  for each given positive integer  $n$, and
        every integer $N\ge  \max_{x\in \sph}\f {K_w} {w(B(x, n^{-1}))}$,  there exist  $N$ distinct nodes $z_1,\cdots, z_N\in\sph$ for which
     \begin{equation}\label{1-5:2017}\int_{\sph} P(x) w(x)\, d\s_d(x) =\f 1 N \sum_{j=1}^N P(z_j),\   \  \forall P\in \Pi_n^d.\end{equation}
     If, in addition, $w\in L^\infty(\sph)$,  then the  set
         of nodes $\{z_1,\cdots, z_N\}$ above   can be chosen to be well separated:
$$\min_{1\leq i\neq j\leq N}\dist(z_i,z_j)\geq c_\ast N^{-\f 1{d-1}},$$
where  $c_{\ast}$ is a positive constant depending only on $\|w\|_\infty$ and the doubling constant of $w$.
  \end{thm}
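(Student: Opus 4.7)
The plan is to adapt the area-regular partition and topological-degree strategy of Bondarenko, Radchenko, and Viazovska \cite{Bond1,Bond2} to the weighted setting. The key new geometric ingredient (announced in the abstract) is a convex partition of $\sph$ that is regular with respect to the doubling weight $w$: for each integer $N\ge K_w\max_{x\in\sph}\f{1}{w(B(x,n^{-1}))}$ one should produce disjoint, geodesically convex sets $R_1,\ldots,R_N$ covering $\sph$ with $w(R_j)=\f1N$ for every $j$, and $\diam(R_j)\le C\,n^{-1}$, where $C$ depends only on $d$ and the doubling constant of $w$. The hypothesis on $N$ is precisely what makes such a partition feasible: since at least one cell must lie inside a cap of radius $n^{-1}$ centered near a point where $w(B(x,n^{-1}))$ is minimal, one necessarily needs $N\gtrsim\max_x 1/w(B(x,n^{-1}))$. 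I would carry out the construction by iterated convex bisection, using \eqref{1-5-2017} to control cell diameters.

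With such a partition in hand, I would follow the scheme of \cite{Bond1,Bond2}. For $(z_1,\dots,z_N)\in R_1\times\cdots\times R_N$ set
\begin{equation*}
L_{(z_1,\dots,z_N)}(P):=\int_{\sph}P(x)\,w(x)\,d\s_d(x)-\f1N\sum_{j=1}^N P(z_j),\qquad P\in\Pi_n^d,
\end{equation*}
and seek $(z_j)$ with $L_{(z_j)}\equiv 0$ on $\Pi_n^d$, a space of dimension $O(n^{d-1})$. Following \cite{Bond1}, one realizes this as a fixed-point problem: with $y_j$ the center of $R_j$, one first shows that $L_{(y_1,\dots,y_N)}$ is already smaller than the perturbation budget available inside each $R_j$; then a Brouwer-degree argument applied to the continuous map $(z_j)\mapsto L_{(z_j)}$ on a suitable product of sub-caps inside the $R_j$'s produces the desired zero. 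The analytic input is a weighted Marcinkiewicz--Zygmund inequality for $\Pi_n^d$ on caps of radius $\lesssim n^{-1}$, which for doubling weights reduces via \eqref{1-5-2017} to the classical unweighted version by comparing $w$ to its average on each cell. A final small generic perturbation inside each $R_j$ makes the $z_j$'s distinct while preserving $L_{(z_j)}\equiv 0$.

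To establish the well-separation bound when $w\in L^\infty(\sph)$, observe that $\f1N=w(R_j)\le\|w\|_\infty\,\s_d(R_j)\lesssim\|w\|_\infty\,r_j^{d-1}$, where $r_j$ is the radius of the smallest cap containing $R_j$; hence $r_j\gtrsim\|w\|_\infty^{-1/(d-1)}N^{-1/(d-1)}$, so each $R_j$ contains a sub-cap of radius comparable to $N^{-1/(d-1)}$. Running the fixed-point step inside these smaller sub-caps forces pairwise geodesic distances between the $z_j$'s of at least $c_\ast N^{-1/(d-1)}$. The main obstacle in the entire argument is the construction of the weighted convex partition in the first step: one must simultaneously achieve the equal $w$-mass condition $w(R_j)=\f1N$, the diameter bound $\diam(R_j)\lesssim n^{-1}$, and geodesic convexity, using only the doubling constant of $w$ and no regularity assumption. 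Once that partition is in place, the subsequent topological and Marcinkiewicz--Zygmund analysis is a fairly direct, though technically delicate, adaptation of \cite{Bond1,Bond2} with $d\s_d$ replaced by $w\,d\s_d$ throughout.
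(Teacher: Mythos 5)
Your overall scheme (a weighted regular partition followed by the topological-degree argument of Bondarenko--Radchenko--Viazovska) is the right one, but the partition you posit cannot exist, and this is exactly the difficulty the paper is organized around. You ask for $N$ geodesically convex cells with $w(R_j)=\f1N$ and, implicitly (you run the fixed-point step ``inside sub-caps'' of the $R_j$ and need a perturbation budget of order $n^{-1}$), inradius comparable to $n^{-1}$. But the hypothesis $N\ge K_w\max_x 1/w(B(x,n^{-1}))$ says that \emph{every} cap of radius $n^{-1}$ has $w$-mass at least $K_w/N$; hence wherever $w(B(x,n^{-1}))$ exceeds its minimum by a large factor --- which happens for essentially every non-constant doubling weight once $n$ is large; e.g.\ $w(x)=c|x_d|^{\al}$ with $\al$ large forces $N\gtrsim n^{d-1+\al}$ while near the poles a cell of mass $\f 1N$ has surface measure only $\sim 1/N\ll n^{-(d-1)}$ --- a convex cell of mass exactly $\f1N$ must have inradius far smaller than $n^{-1}$. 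With such cells the key lower bound of the scheme, namely that the main term $\sum_j\int_{R_j}\bl[\max_{R_j}P-P\br]\,w\,d\s_d$ dominates $c\,n^{-1}\|\nabla_0P\|_{1,w}$ (the paper's estimate \eqref{Sum1}), collapses: that bound costs one factor of the inradius per cell, while the error terms controlled by the weighted Marcinkiewicz--Zygmund inequalities (Lemma \ref{sphdesign301}) do not shrink accordingly, so positivity of the sum can no longer be forced. The same defect undermines your separation argument, which only bounds the circumradius of $R_j$ from below and says nothing about the distance from a node to $\p R_j$ or to nodes in adjacent thin cells.

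The paper's resolution, which is the genuinely new idea your proposal is missing, is to give up equal mass: Theorem \ref{thm-partition} produces convex cells that are all sandwiched between caps of radii $\sim \da/n$ but carry mass $w(R_j)=k_j/N$ with $k_j$ an arbitrary positive integer; one then places $k_j$ points $x_{j,1},\dots,x_{j,k_j}$ inside $R_j$, mutually $2r_j$-separated with $r_j\sim n^{-1}k_j^{-1/(d-1)}$, and flows \emph{all} of them toward the same extremal point $z_{j,P}$ of $R_j$ in the direction $\nabla_0P(x_j)$. Bookkeeping the multiplicities $k_j$ through the four-term decomposition \eqref{5-8-2017}, and through the separation estimate (where $w\in L^\infty$ gives $k_j\le C\|w\|_\infty N(\da/n)^{d-1}$, hence $r_j\gtrsim\|w\|_\infty^{-1/(d-1)}N^{-1/(d-1)}$), is what makes the weighted case go through. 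Separately, your ``iterated convex bisection'' is not a construction of the partition: obtaining convex cells that are simultaneously comparable to caps and carry prescribed integer multiples of $1/N$ of $w$-mass occupies Sections 2--3 of the paper and requires admissible geodesic simplexes, induction on the dimension, and the nonlinear dilations $T_\al$.
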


As a corollary of Theorem \ref{sphdesign201}, we have
\begin{cor}\label{cor-1-2} Given  a  doubling weight $w$ on $\sph$,   the minimal sizes   of   the  strict    Chebyshev-type CFs
  for the  measure $w(x)d\s_d(x)$ on $\sph$ satisfy
  $$ \mathcal{N}_n (wd\s_d) \sim \max_{x\in \sph}\f {w(\sph)} {w(B(x, n^{-1}))},\ \ n=1,2,\cdots$$
  with the constants of equivalence depending only on $d$ and  the doubling constant of $w$.
\end{cor}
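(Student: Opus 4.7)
The upper bound $\mathcal{N}_n(wd\sigma_d) \lesssim \max_{x\in\sph}w(\sph)/w(B(x, 1/n))$ is an immediate consequence of \thmref{sphdesign201}. After rescaling so that $w(\sph) = 1$ (which preserves the doubling constant), that theorem produces a strict Chebyshev-type CF for every integer $N \geq K_{w,d}\max_x 1/w(B(x, 1/n))$; taking $N$ to be the least such integer gives the claimed upper bound with constant depending only on $d$ and $L_w$.

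For the lower bound the plan is a two-step Christoffel-function argument. Start from the standard estimate $\lambda_{\lfloor n/2 \rfloor}(x, wd\sigma_d) \sim w(B(x, 1/n))$ for doubling $w$ on $\sph$ (a Mastroianni--Totik-type estimate, extended to the sphere; see e.g.\ the Dai--Xu monograph), where $\lambda_m(x, wd\sigma_d) := \inf\{\int P^2 w\, d\sigma_d : P \in \Pi_m^d,\ P(x) = 1\}$. Step one: for any strict CF with nodes $z_1,\ldots,z_N$ and any node $z_j$, choose a near-minimizer $P_j \in \Pi_{\lfloor n/2\rfloor}^d$ with $P_j(z_j) = 1$ and $\int P_j^2 w\, d\sigma_d \lesssim w(B(z_j, 1/n))$. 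Applying the cubature identity \eqref{1-5:2017} to $P_j^2 \in \Pi_n^d$ and using $\sum_k P_j(z_k)^2 \geq P_j(z_j)^2 = 1$ yields $N \gtrsim w(\sph)/w(B(z_j, 1/n))$ at every node. Step two: choose $x^* \in \sph$ attaining $\min_x w(B(x, 1/n))$ and produce a node $z_{j_0}$ with $\dist(z_{j_0}, x^*) \leq C_{w,d}/n$. The doubling property then gives $w(B(z_{j_0}, 1/n)) \lesssim w(B(x^*, 1/n))$, and step one applied to $z_{j_0}$ yields $N \gtrsim w(\sph)/w(B(x^*, 1/n))$, as required.

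The principal obstacle is step two: exhibiting a node close to the extremal point $x^*$. The natural approach is to apply the cubature identity to a nonnegative peak polynomial $T \in \Pi_n^d$ centered at $x^*$, with $T \geq c$ on $B(x^*, c_0/n)$ and rapid decay $T(y) \leq C_s(1 + n\dist(y, x^*))^{-s}$; if no node lay in $B(x^*, R/n)$, the identity would force $R \leq C_s^{1/s}(w(\sph)/w(B(x^*, 1/n)))^{1/s}$. Using the doubling bound $w(\sph)/w(B(x^*, 1/n)) \lesssim n^{s_w}$, one makes the right-hand side a constant by taking $s$ proportional to $\log n$; but standard reproducing-kernel or Jackson-power constructions, whose decay constants grow like $C_s \sim s^{Ds}$, produce only a bound weakened by a polylogarithmic factor. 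Attaining the sharp $\sim$-estimate demands a peak polynomial with uniformly bounded $C_s^{1/s}$; such constructions can be realized via compositions of degree-$O(n/s)$ Jackson-type kernels or via convolution-type polynomial kernels adapted to $\sph$ in the harmonic-analysis framework of Dai and Xu. Verifying this uniform bound on the decay constants is the essential technical lemma that makes the lower bound go through without logarithmic loss.
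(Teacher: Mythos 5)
Your upper bound and your ``step one'' are both fine: the Christoffel-function argument (via the estimate $\l_{\lfloor n/2\rfloor}(w,z_j)\sim w(B(z_j,n^{-1}))$ of Lemma \ref{lem-3-2-0}, applying the cubature identity to $P_j^2\in\Pi_n^d$) correctly yields $N\gtrsim 1/w(B(z_j,n^{-1}))$ \emph{at every node} $z_j$. The genuine gap is step two, and you have in effect diagnosed it yourself without closing it. To transfer the bound from the nodes to the extremal point $x^*$ you need a node within $O(n^{-1})$ of $x^*$, and the peak-polynomial argument you sketch requires a nonnegative polynomial of degree $n$ concentrated at scale $n^{-1}$ with decay constants $C_s^{1/s}$ uniformly bounded in $s$ --- i.e.\ essentially exponential localization $e^{-cn\dist(y,x^*)}$. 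No such construction is available (degree-$n$ polynomial kernels admit only sub-exponential localization), and with any fixed or even sub-exponential decay the argument only places a node within $O((\log n)^{C}/n)$ of $x^*$, which after doubling leaves exactly the polylogarithmic loss you are worried about. As written, the lower bound is not proved.

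The paper avoids locating nodes altogether. Fix any $x\in\sph$ and take the nonnegative kernel $P_n(\la x,\cdot\ra)$ of degree $\leq n/4$ from \cite[Lemma 4.6]{F_W} with \emph{fixed} decay order $2\ell$, $\ell\ge s_w+d+1$, so that $P_n(\la x,y\ra)\sim n^{d-1}(1+n\dist(x,y))^{-2\ell}$. Since $P_n(\la x,\cdot\ra)$ and its square both lie in $\Pi_n^d$ and are nonnegative, the cubature formula computes $\|P_n(\la x,\cdot\ra)\|_{1,w}$ and $\|P_n(\la x,\cdot\ra)\|_{2,w}^2$ exactly as the discrete averages $\f1N\sum_j a_j$ and $\f1N\sum_j a_j^2$ with $a_j=P_n(\la x,\og_j\ra)\ge0$. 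The elementary inequality $(\sum_j a_j^2)^{1/2}\leq\sum_j a_j$ then gives $\|P_n(\la x,\cdot\ra)\|_{2,w}\leq C\sqrt N\,\|P_n(\la x,\cdot\ra)\|_{1,w}$, while the localization estimates give $\|P_n(\la x,\cdot\ra)\|_{p,w}\sim n^{d-1}\bl(n^{-d+1}w_n(x)\br)^{1/p}$ with $w_n(x)=n^{d-1}w(B(x,n^{-1}))$; comparing yields $N\ge c/w(B(x,n^{-1}))$ for \emph{every} $x\in\sph$, with no logarithmic loss. If you want to salvage your route, replace step two by this $L^1$--$L^2$ comparison (or, equivalently, run your Christoffel argument in the dual form above); as it stands the ``essential technical lemma'' you defer to is not merely unverified but unobtainable.
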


Several remarks are in order.
\begin{rem}\  \
\begin{enumerate}[\rm (i)] \item    By \eqref{1-5-2017}, it is easily seen that
$\max_{x\in \sph}\f 1 {w(B(x, n^{-1}))}\leq 2\pi^{s_w} n^{s_w}$. Thus, Corollary \ref{cor-1-2} particularly  implies that
$ \mathcal{N}_n (wd\s_d) \leq C n^{s_w}<\infty$ for every doubling weight $w$ on $\sph$.

  \item For the weights $w_\al$ given in \eqref{1-3}, a straightforward calculation shows that
$$ w_\al (B(x, \ta)) \sim \ta^{d-1} \prod_{j=1}^d (|x_j|+\ta)^{\al_j},\    \   x\in\sph,\  \  \ta\in (0,\pi].$$
By  Corollary \ref{cor-1-2}, this implies  that
$$ \mathcal{N}_n (w_\al d\s_d) \sim  n^{d-1+\sum_{j\in I_\al}  \al_j-\max\{\al_{\min},0\}},$$
where
$\al_{\min}=\min_{1\leq i\leq d}\al_i$ and $I_{\al}=\{ i:\  \ 1\leq i\leq d,\   \ \al_i\ge  0\}$.

\item Note that only  Chebyshev-type CFs with distinct nodes  are involved  in Theorem \ref{sphdesign201}. However,
   it is worthwhile to point out that a slight modification of our proof   shows that for each integer $N\ge  \max_{x\in \sph}\f {K_w} {w(B(x, n^{-1}))}$ there exists a Chebyshev-type CF of degree $n$ and size $N$ for the measure $w(x)d\s_{d}(x)$ consisting of a large number of multiple nodes (i.e., repeated notes), and  with the number $N_n$  of distinct nodes satisfying  $N_n\sim  n^{d-1}$.
\end{enumerate}

\end{rem}

 While the proof of Theorem \ref{sphdesign201}  follows  the methods of the papers  \cite{Bond1, Bond2},   it is more technical and involved than the corresponding unweighted case  due to the fact that  the measure $w(x) d\s_d(x)$ is not rotation-invariant,  which means that in general,    the weighted measure $w(B(x,r))$ of a spherical cap  not only depends on radius $r$ but also on the center $x$.

 An important ingredient used in our proof is
   the convex partition of  $\sph$ that is regular with respect to a given weight.
 Recall that a  subset $A\subset \sph$ is  geodesically convex if any two points $x, y\in A$  can be joined  by a geodesic arc that lies entirely in $A$, whereas  a finite collection $\{R_1, R_2, \cdots, R_N\}$ of closed geodesically convex subsets of $\sph$ is called
    a convex partition of $\sph$ if $\sph=\bigcup_{j=1}^N R_j$ and the interiors of the  sets $R_j$ are
     pairwise disjoint.
     Our result  on  regular convex partitions of the weighted sphere can be stated   as follows:

 \begin{thm} \label{thm-partition}Let $w$ be a normalized  weight on $\sph$  (i.e., $w(\sph)=1$) satisfying that  $w(B)>0$ for every nonempty spherical cap $B\subset \sph$. Assume that $r\in (0, \pi]$ and  $N$ is a positive integer satisfying   $\min_{x\in\sph} w(B(x, r))\ge \f 1N$.  Then
there exists a convex partition
 $\{R_1, \cdots, R_M\}$ of $\sph$    satisfying that
  $ B(x_j, c_d' r )\subset R_j \subset B(x_j, c_d r)$ and $Nw(R_j)\in\NN$ for every  $1\leq j\leq M$, where $c_d, c_d'$ are two positive constants depending only on $d$.
 \end{thm}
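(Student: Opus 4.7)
The plan is to build the partition by recursive bisection using great $(d-2)$-subspheres of $\sph$, maintaining throughout the construction the invariant that every current cell is closed, geodesically convex, and has integer $N$-weight. I begin with the trivial partition $\{\sph\}$ (for which $Nw(\sph) = N$), and at each step, if any cell has diameter exceeding a threshold $\rho \sim r$, I bisect it into two cells that inherit the same invariants; when no such cell remains, the construction halts.

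The central sublemma is the following: given a closed geodesically convex region $R\subset\sph$ with $k:=Nw(R)\geq 2$ and diameter $D>\rho$, one can find a great $(d-2)$-subsphere $\Gamma\subset\sph$ cutting $R$ into geodesically convex pieces $R',R''$ with $Nw(R'),Nw(R'')\in\ZZ$ summing to $k$. Let $u,v\in R$ be endpoints of a longest chord; as the foot $p$ of a hyperplane perpendicular to the geodesic from $u$ to $v$ slides from $u$ toward $v$, the pieces $R\cap H^\pm$ (intersections of $R$ with the resulting hemispheres) are geodesically convex, and $w(R\cap H^u)$ depends continuously on $p$, sweeping from $0$ to $w(R)$. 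By the intermediate value theorem, any prescribed integer weight $k_1/N$ with $k_1\in\{1,\ldots,k-1\}$ is attained. To control the shape of the children I choose the sweep perpendicular to the longest chord of $R$ and take $k_1=\lfloor k/2\rfloor$, i.e.\ the split as balanced as possible.

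Termination is automatic: each bisection strictly increases the cell count, which is bounded by $N$ (since each cell carries at least one unit of weight), so at most $N-1$ bisections occur. At termination, every leaf $R_j$ has diameter $\leq\rho$, giving the upper inclusion $R_j\subset B(x_j,c_d r)$ for any interior center $x_j$. For the lower inclusion $R_j\supset B(x_j,c_d' r)$, the hypothesis $\min_x w(B(x,r))\geq 1/N$ forces any convex set of diameter $\geq C_d r$ to contain several disjoint caps of radius $r$, hence to satisfy $k\geq 2$; combined with the balanced splitting strategy, which prevents long thin slivers, one concludes that every leaf has inradius comparable to outradius, and in particular comparable to $r$, via a John-ellipsoid argument applied to the leaves (whose diameters are already $\lesssim r$, making the local geometry essentially Euclidean).

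The main obstacle is reconciling the rigid arithmetic integer constraint with the geometric balance required for roundness. For highly non-uniform weights $w$, a weight-balanced cut can sit far from a geometrically central one, and for cells with $k$ very small (such as $k=2$, which forces a $1+1$ split) no choice of cut is available. The resolution is built into the stopping rule: a cell is only bisected while its diameter exceeds $\rho$, and the weight hypothesis then forces such a cell to have $k\gtrsim(D/r)^{d-1}$ disjoint $r$-caps worth of weight, so $k$ is large enough to offer many integer splits near the geometric midpoint. The perpendicular-to-longest-axis strategy, coupled with near-balanced $k_1=\lfloor k/2\rfloor$ splits, then keeps the aspect ratios of descendants uniformly bounded throughout the recursion.
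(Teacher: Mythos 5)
Your recursive weight-balanced bisection has a genuine gap at its load-bearing step: the claim that cutting perpendicular to the longest chord with the balanced split $k_1=\lfloor k/2\rfloor$ ``keeps the aspect ratios of descendants uniformly bounded.'' The hypothesis $\min_{x\in\sph}w(B(x,r))\ge \f1N$ is only a \emph{lower} bound on cap weights; it gives no upper bound, so $w$ may be arbitrarily concentrated near a single point $p$ of a cell. The weight-median cut perpendicular to the longest chord then sits arbitrarily close to $p$, producing a child that is a sliver of thickness $\varepsilon\ll r$ in the cut direction while still carrying weight $k/(2N)$ and having large diameter in the transverse directions. Subsequent cuts, being perpendicular to the \emph{new} longest chord, never repair the thinness in the original direction, so the recursion can terminate at a leaf of diameter $\le\rho$ containing no ball of radius $c_d' r$: the lower inclusion fails. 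The same unproven roundness invariant is what you invoke to guarantee $k\ge 2$ for cells of diameter $>\rho$ (a thin sliver of large diameter need not contain any cap of radius $r$, so it may have $k=1$ and yet be unsplittable, stalling the algorithm), and to run the John-ellipsoid step at the end: Lemma \ref{lem-2-6} needs a lower bound on the \emph{volume} of each leaf, whereas your construction only controls its \emph{weight}, and these are unrelated for concentrated $w$.

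The fix requires coupling the arithmetic to the geometry in the opposite order: constrain the cut to lie in a geometric window of width $\sim r$ about a prescribed location, verify using the hypothesis that the slab swept by this window contains a cap of radius $r$ and hence has weight $\ge\f1N$, and only then invoke the intermediate value theorem to find \emph{some} integer split inside the window --- rather than prescribing the integer $\lfloor k/2\rfloor$ and letting the weight dictate the location. This is essentially what the paper does: it first partitions $\sph$ into boundedly many admissible geodesic simplexes of integer $N$-weight (itself delicate for general $N$, requiring the nonlinear dilations $T_\al$ of Section~3), then flattens each simplex and slices dimension by dimension, each slice having thickness between $r_1$ and $3r_1$ with the exact cut position chosen by continuity to make the weight an integer multiple of $\f1N$. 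That way every piece acquires both an inscribed and a circumscribed ball of radius $\sim r$ by construction, which is precisely the property your scheme cannot certify.
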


Several remarks on Theorem \ref{thm-partition} are in order:

\begin{rem}
\begin{enumerate}[(i)]
\item Theorem \ref{thm-partition} seems to be of independent interest. It may   have  other  applications in discrepancy theory and optimization of discrete energies on the sphere (see, for instance, \cite{KS, KuiSaff}).

\item In the case of $w\equiv 1$,   Theorem \ref{thm-partition} with $M=N$ and  $\s_d(R_j)=\f 1N$ for $j=1, \cdots, N$  is due to Bondarenko, Radchenko, and Viazovska
 \cite[Proposition 1]{Bond2}.
 The proof of Theorem \ref{thm-partition} is, however,  much more involved than the corresponding case of the surface Lebesgue measure $d\s_d$ due to the fact that the measure $w(x)d\s_d(x)$ is not rotation-invariant.

 \item Since   the   weight $w$  may have zeros or discontinuities on $\sph$,  the integers $k_j:=Nw(R_j)$ in Theorem \ref{thm-partition} in general  depend on the location of $R_j$, which is different from the corresponding  unweighted case.  This difference also increases the technical difficulties  of  the proof of Theorem \ref{sphdesign201}.
 \item If, in addition, the weight    $w$  satisfies the doubling condition, then according to \eqref{1-5-2017}, there exists a constant $c_w$ such that the condition,   $\min_{x\in\sph} w(B(x, r))\ge \f 1N$,  is satisfied whenever $r\ge c_w N^{-1/s_w}$.
\end{enumerate}
\end{rem}

  If we drop the  convexity requirement in the partition in   Theorem \ref{thm-partition}, we may deduce   the following corollary:

 \begin{cor}\label{cor-1-4}Under the conditions of Theorem \ref{thm-partition},
there exists a partition  $\{R_1, \cdots, R_N\}$ of $\sph$  such that
 $w(R_j)=\f 1N$ and $\diam (R_j)\leq C_w  r$ for $j=1,\cdots, N$.
\end{cor}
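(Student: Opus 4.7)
The plan is to deduce Corollary \ref{cor-1-4} from Theorem \ref{thm-partition} by further subdividing each convex cell. First, apply Theorem \ref{thm-partition} to produce a convex partition $\{R_1', \dots, R_M'\}$ of $\sph$ together with points $x_1, \dots, x_M \in \sph$ such that $R_j' \subset B(x_j, c_d r)$ and $k_j := N w(R_j') \in \NN$ for each $j$. Since $w(\sph) = 1$, summing gives $\sum_{j=1}^M k_j = N$.

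Next, subdivide each $R_j'$ into $k_j$ measurable pieces of $w$-measure exactly $1/N$ by a continuous slicing argument. Fix any unit vector $v_j$ and consider the sublevel sets $H_{j,t} := \{y \in \sph : y \cdot v_j \le t\}$. The function $\phi_j(t) := w(R_j' \cap H_{j,t})$ is nondecreasing in $t$, runs from $0$ to $k_j/N$, and is continuous on $\RR$: indeed, each level set $\{y \in \sph : y \cdot v_j = t\}$ is a lower-dimensional subsphere of $\sigma_d$-measure zero, and since $w$ is integrable, the measure $w\, d\sigma_d$ is absolutely continuous with respect to $d\sigma_d$, so no single level carries $w$-mass. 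By the intermediate value theorem, choose real numbers $t_{j,0} \le t_{j,1} \le \cdots \le t_{j,k_j}$ with $\phi_j(t_{j,\ell}) = \ell/N$. Then the slices $S_{j,\ell} := R_j' \cap \{t_{j,\ell-1} < y \cdot v_j \le t_{j,\ell}\}$ satisfy $w(S_{j,\ell}) = 1/N$ for $\ell = 1, \dots, k_j$.

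The family $\{S_{j,\ell} : 1 \le j \le M,\ 1 \le \ell \le k_j\}$ is then a partition of $\sph$ into exactly $\sum_{j} k_j = N$ measurable sets, each of $w$-measure $1/N$. Since $S_{j,\ell} \subset R_j' \subset B(x_j, c_d r)$, every such piece has $\diam(S_{j,\ell}) \le 2 c_d r$. This yields the required partition with a constant $C_w = 2 c_d$ that in fact depends only on the dimension $d$.

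I do not anticipate any serious obstacle. The only point requiring care is the continuity of the slicing function $\phi_j$, which follows immediately from the absolute continuity of $w\, d\sigma_d$; strict monotonicity is never invoked, so potential flat stretches of $\phi_j$ coming from regions where $w$ vanishes cause no trouble. The hypothesis that $w(B) > 0$ on every nonempty spherical cap is not strictly needed for this corollary, as convexity of the $R_j'$ has already been discarded and only the diameter bound and the integer condition $k_j \in \NN$ from Theorem \ref{thm-partition} are used.
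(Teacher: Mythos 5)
Your proof is correct and takes essentially the same route as the paper: the paper deduces the corollary from Theorem \ref{thm-partition} by invoking the fact that for any $E\subset\sph$ and $0<\al<w(E)$ there is $F\subset E$ with $w(F)=\al$, which is exactly what your half-space slicing argument implements concretely to split each cell $R_j'$ into $k_j=Nw(R_j')$ pieces of $w$-measure $1/N$. The only cosmetic point is to take the extreme levels $t_{j,0}$ and $t_{j,k_j}$ outside the range of $y\cdot v_j$ so that the slices genuinely exhaust $R_j'$ rather than merely up to a $w$-null set.
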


   Corollary \ref{cor-1-4} follows directly from   Theorem \ref{thm-partition} due to  the fact that if $E\subset \sph$ and   $0<\al<w(E)$, then there exists a subset $F$ of $ E$ such that $w(F)=\al$.

%
%
%
%
%

In order to establish similar results on other domains, we also  need to  consider weights on $\sph$  that are symmetric under certain reflection groups, in which case it can be shown that   the set of nodes in the corresponding  Chebyshev cubature formula enjoys    the same symmetry. Let us first describe briefly   some necessary notation.
Given $j=1, 2,\cdots, d$, we denote by $\tau_j$  the reflection with respect to the coordinate plane $x_j=0$; that is,
$$x \tau_j =(x_1, \cdots, x_{j-1}, -x_j, x_{j+1},\cdots, x_d),\   \    x\in\RR^d.$$
Denote by  $\ZZ_2^d$  the abelian
reflection group  generated by the reflections $\tau_1,\cdots, \tau_d$.
A weight $w$ on $\sph$ is called $\tau_j$-invariant for a given $j$ if $w(x\tau_j) =w(x)$ for all $x\in\sph$, and  is called $\ZZ_2^d$-invariant if
 it is $\tau_j$-invariant for every $j=1,2, \cdots, d$.
Similarly, we say  a finite subset $\Ld$ of $\sph$  is    $\tau_j$-invariant for a given $j\in\{1,2,\cdots,d\}$ if $\Ld=\{ x\tau_j:\  \ x\in\Ld\}$, whereas it  is  $\ZZ_2^d$-invariant if it  is $\tau_j$-invariant for every $j=1,2, \cdots, d$.
For simplicity, we set $\SS_{\text{inter}}^{d-1}:=\{ x\in \sph:\  \ x_i\neq 0,\  \  i=1,2,\cdots, d\}$.

A slight modification of the proof of Theorem \ref{sphdesign201}  yields the following result.
\begin{cor}\label{cor-1-3:2017} Let $w$ be a $\ZZ_2^d$-invariant (resp. $\tau_d$-invariant)  doubling weight on $\sph$  normalized by $w(\sph)=1$.  Then  the conclusions of Theorem ~\ref{sphdesign201}  hold with  the   set of nodes $\Ld:=\{z_1, \cdots,z_N\}$ being  $\ZZ_2^d$-invariant (resp. $\tau_d$-invariant)   and     contained in the set $\SS_{\text{inter}}^{d-1}$,   (i.e. none of the nodes lies in the coordinate planes).
\end{cor}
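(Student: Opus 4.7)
My plan is to follow the proof of Theorem~\ref{sphdesign201} step by step and enforce invariance under the group $G$ at every stage, where $G=\ZZ_2^d$ in the first case and $G=\{1,\tau_d\}$ in the second. Since both the weight $w$ and the polynomial space $\Pi_n^d$ are already $G$-invariant, the natural strategy is to perform all constructions on a fundamental domain and extend by the $G$-action.

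First, I would construct a $G$-invariant analog of the convex partition of Theorem~\ref{thm-partition}. For $G=\ZZ_2^d$ the fundamental domain is the closed spherical simplex $\Og_+:=\sph\cap\{x:x_j\ge 0,\ j=1,\ldots,d\}$; for $G=\{1,\tau_d\}$ it is the closed upper hemisphere. The proof of Theorem~\ref{thm-partition} builds the partition by successively bisecting cells along totally geodesic hypersurfaces, so I would begin by inserting the coordinate hyperplanes $\{x_j=0\}$ as the very first cuts, which splits $\sph$ into the $|G|$ copies of the fundamental domain, each of total $w$-mass $|G|^{-1}$. Inside one such copy I would run the bisection procedure of Theorem~\ref{thm-partition} with the restricted weight $w|_{\Og_+}$. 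Because $w$ is $G$-invariant, reflecting the resulting partition of $\Og_+$ under $G$ yields a $G$-invariant convex partition $\{R_1,\ldots,R_M\}$ of $\sph$ with the same cap-containment estimates and with $Nw(R_j)\in\NN$ for every $j$.

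Next, I would run the Bondarenko--Radchenko--Viazovska-type continuous selection in the $G$-invariant configuration space. Within each $G$-orbit of cells I choose $k_j=Nw(R_j)$ nodes in a representative cell and extend them to the rest of the orbit by the $G$-action; the resulting configuration is $G$-invariant by construction, so both sides of $(\ast)$ automatically vanish on the $G$-antisymmetric part of each $P\in\Pi_n^d$. The cubature identity therefore only needs to be enforced on the subspace of $G$-invariant polynomials of degree at most $n$, and the Brouwer-type topological argument used in the proof of Theorem~\ref{sphdesign201} can be carried out in the corresponding space of $G$-equivariant perturbations of the candidate nodes. To guarantee $\Ld\subset\SS_{\text{inter}}^{d-1}$, I would place the free nodes strictly in the interior of the fundamental domain, using the diameter estimates on the cells and the small amplitude of the final perturbation to ensure that they do not reach the coordinate hyperplanes; any node that would otherwise land on $\{x_j=0\}$ can be pushed off in a $G$-equivariant manner.

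The main obstacle I foresee is combinatorial: ensuring that the integers $k_j=Nw(R_j)$ are constant along each $G$-orbit, so that the same number of nodes can be placed equivariantly in every cell of the orbit. For orbits whose cells lie in the interior of $\Og_+$ this is automatic, but cells meeting $\p\Og_+$ form smaller orbits and must be treated separately. I would handle this by absorbing such boundary cells into neighbouring interior cells before applying Theorem~\ref{thm-partition} on the fundamental domain; the doubling property of $w$ bounds the mass lost to this merging by a controlled fraction of the total, so the constant $K_w$ in the hypothesis on $N$ can be enlarged to compensate, which still gives the conclusion of Theorem~\ref{sphdesign201} in the $G$-equivariant form asserted by the corollary.
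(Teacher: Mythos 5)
Your overall strategy is the one the paper intends: build the regular convex partition on a fundamental domain of $G$ (the positive orthant simplex $\conv_{\sph}\{e_1,\dots,e_d\}$ for $G=\ZZ_2^d$, the hemisphere for $G=\{1,\tau_d\}$), extend it to $\sph$ by the $G$-action, and then run the topological/flow argument of Section~5 equivariantly, so that the cubature identity only needs to be verified on $G$-invariant polynomials. This is exactly what Remarks~\ref{rem-2-7} and \ref{rem-2-8} together with the corollary closing Section~2 are set up to deliver (the hemisphere automatically has mass $\tfrac12$, and the orthant simplex is an admissible geodesic simplex of mass $2^{-d}$), and the fact that the moved nodes $\mathbf{x}_{j,i}(P)$ stay in the \emph{interiors} of their cells (by the $(1-\da)$ truncation and the convexity argument at the end of Section~5) is what keeps $\Ld$ inside $\SS_{\text{inter}}^{d-1}$.

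The one substantive comment concerns your third paragraph: the combinatorial obstacle you describe does not actually arise, and the remedy you propose for it should be dropped. If every cell $R_j$ of the partition is a subset of the closed fundamental domain $S$ and contains a cap $B(x_j,c_d'r)$, then its interior lies in the open orthant (resp.\ open hemisphere), so the $|G|$ reflected copies $\{R_j\tau:\tau\in G\}$ are pairwise distinct sets with pairwise disjoint interiors even when $R_j$ touches a wall $\{x_i=0\}$; every orbit of cells has exactly $|G|$ elements, and $w(R_j\tau)=w(R_j)$ by invariance, so $k_j$ is automatically constant on orbits. (What this does force is the implicit divisibility $|G|\mid N$, which is needed anyway for a free $G$-invariant set of $N$ points and is built into the remarks via the hypotheses $2\mid N$, resp.\ $2^d\mid N$.) By contrast, ``absorbing boundary cells into neighbouring interior cells'' would in general destroy both the geodesic convexity and the two-sided cap inclusions $B(x_j,c_d'r)\subset R_j\subset B(x_j,c_dr)$ on which the estimates of $\Sigma_1,\dots,\Sigma_4$ depend, so it is fortunate that no such merging is required.
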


 The weighted results on the sphere also allow us to establish similar results on the  unit ball $\BB^d:=\{x\in\RR^d:\  \ \|x\|\leq 1\}$ and on the simplex
 $\TT^d:=\Bl\{ x=(x_1,\cdots, x_d)\in \RR^d:\  \ x_1, \cdots, x_d\ge 0,  |x|:=\sum_{j=1}^d |x_j|\leq 1\Br\}.$  Indeed, it was   observed  by Y. Xu \cite{Xu7} that   CFs  on $\BB^d$  and    $\TT^d$ are closely related to weighted CFs  on $\SS^d$. To describe this connection,  we use the notation $\Og$ to denote either the unit ball $\BB^d$ or the simplex  $\TT^d$ equipped with the usual Lebesgue measure $dx$. Consider   the following  metric $\rho_{\Og}$ on the domain $\Og$:  for $x, y\in \Og$,
 \begin{equation}\label{metric-ball}
\rho_{\Og} (x,y):=\begin{cases} \|x-y\|+\bl|\sqrt{x_{d+1}}-\sqrt{y_{d+1}}
\br|,&\  \ \text{if $\Og=\BB^d$},\\
\arccos ( \sum_{j=1}^{d+1}\sqrt{x_jy_j}), &\  \ \text{if $\Og=\TT^d$},\end{cases}
\end{equation}
 where  we write
 $$x_{d+1}=\left\{
             \begin{array}{ll}
               \sqrt{1-\|x\|^2}, & \hbox{$x\in \BB^d$;} \\
               1-|x|=1-x_1-\cdots-x_d, & \hbox{$x\in \TT^d$.}
             \end{array}
           \right.
 $$ Denote by $B_\Og(x,r)$ the ball $\{y\in\Og:\  \ \rho_\Og (x,y)\leq r\}$ with center $x\in\Og$ and radius $r>0$.
A doubling weight  on $\Og$ is  a weight $w$ on $\Og$ satisfying that
$$ w(B_\Og (x, 2r))\leq L w(B_{\Og}(x,r)),\  \ \forall x\in\Og,\  \ \forall r>0$$
for some positive constant $L$, where $w(E)=\int_E w(x)\, dx$ for $E\subset \Og$.

 Next, we define the mapping $\phi_\Og :\  \ \SS^d\to \Og$ by
 $$\phi_{\Og}(x)=\begin{cases} (x_1, x_2, \cdots, x_d),&\   \ \text{if $ \Og=\BB^d$},\\
 (x_1^2, x_2^2, \cdots, x_d^2), &\   \  \text{if $ \Og=\TT^d$},
 \end{cases}\   \  \  x=(x_1, x_2, \cdots, x_d, x_{d+1})\in \SS^d.$$
 A   change of variables  shows that for each  integrable function $f$ on $\Og$ (see \cite{Xu7})
 \begin{equation}\label{1-7:2017}
   \int_{\Og} f(x)\, dx= c_d \int_{\SS^d} f( \phi_\Og(x)) \Phi_{\Og} (x)\, d\s_{d+1}(x),
 \end{equation}
 where $\Phi_{\BB^d}(x)=|x_{d+1}|$ and $\Phi_{\TT^d} (x)=\prod_{j=1}^{d+1} |x_j|$ for $x=(x_1,\cdots, x_{d+1})\in\SS^d$.
Note that if $f\in \Pi_n^d$, then $f\circ \phi_{\Og}$ is a $\tau_{d+1}$-invariant or $\ZZ_2^d$ invariant spherical polynomial on $\SS^{d+1}$ of degree at most $n$ or $2n$ depending on whether $\Og=\BB^d$ or $\Og=\TT^d$.

 Given a weight $w$ on $\Og$, we define a weight $ w_\Og$ on the sphere $\SS^d$ by
 \begin{equation}\label{}
    w_\Og(x, x_{d+1})=\begin{cases}w (x)|x_{d+1}|,&\  \  \text{if $\Og=\BB^d$},\\
    (\prod_{j=1}^{d+1} |x_j|) w (x_1^2,\cdots, x_d^2),&\  \  \text{if $\Og=\TT^d$},
    \end{cases}
 \end{equation}
 where  $x\in\BB^d$ and $(x, x_{d+1})\in\SS^d$.
Clearly  $w_{\Og}$ is a $\tau_{d+1}$ invariant or $\ZZ_2^d$ invariant weight on $\SS^d$ satisfying $w_{\Og}(z)=w(\phi_\Og(z))\Phi_\Og (z)$ for $z\in\SS^d$. Thus,  according to  \eqref{1-7:2017},    $w$ is a doubling weight on $\Og$ if and only if $ w_\Og$ is a doubling weight on the sphere $\SS^d$, and furthermore, $w(E)=w_\Og (E^{\Og})$ for every measurable  $E\subset \Og$, where
%
%
  $$
 E^{\Og}=\begin{cases}
 \{ (x, \sqrt{1-\|x\|^2}):\  \  x\in E\},&\   \    \text{ if $\Og=\BB^d$},\\
  \{ (\sqrt{x_1},\cdots, \sqrt{x_d}, \sqrt{1-|x|}):\  \  x\in E\},   &\  \  \text{ if $\Og=\TT^d$}.\end{cases}$$

 The following are some  examples of doubling weights on the ball $\BB^d$  or the simplex $\TT^d$:
 \begin{exam} \begin{enumerate}[\rm (i)] \item  $
    w(x) :=\prod_{j=1}^m \|x-\eta_j\|^{\al_j}$, $ x\in \BB^d$, where    $\al_1,\cdots, \al_m>-1$ and  $\eta_1,\cdots, \eta_m$ are distinct points in $\BB^d$.
    \item $w(x)=(1-\|x\|)^{\al_{d+1}}\prod_{j=1}^d |x_j|^{\al_j}$, $ x\in \BB^d$, where    $\al_1,\cdots, \al_{d+1}>-1$.
        \item $w(x)=(1-|x|)^\mu\prod_{j=1}^d |x_j|^{\k_j}$, $x\in \TT^d$, where $\k_1,\cdots,\k_d, \mu>-1$.
 \end{enumerate}
\end{exam}

As a direct consequence of \eqref{1-7:2017} and  Corollary \ref{cor-1-3:2017}, we deduce
\begin{cor}\label{sphball}
Let $\Og$ denote either the unit  ball  $\BB^d$ or the simplex $\TT^d$, and
 $w$  a  doubling weight  on $\Og$ normalized by $w(\Og)=1$. Let
 $M^\Og_{n,w}:=\max_{x\in \Og}\f 1 {w(B_{\Og}(x,n^{-1}))}$ for $n=1,2,\cdots$.
 Then there exist positive  constants $K_{d,w}$, $C_{d, w}$, $c_{d,w}$ depending only on $d$ and the doubling constant of $w$ such that the
   following statements holds:
  \begin{enumerate}[\rm (i)]
  \item For $n\in\NN$, we have
  $ c_{d,w}M^\Og_{n,w}\leq \mathcal{N}_{n,\Og}( w(x)dx)\leq C_{d,w}M^\Og_{n,w}$.
 \item   For  each   integer $N\ge K_{d,w} M_{n,w}^{\Og}$,  there exists  a set of $N$ distinct nodes $z_1,\cdots, z_N$ in the interior of $\Og$ such that
     $$\int_{\Og} P(x) w(x)\, dx =\f 1 N \sum_{j=1}^N P(z_j),\   \  \forall P\in \Pi_n^d.$$
     If, in addition, $w_{\Og}\in L^\infty(\SS^d)$,  then the  set
         of nodes  $z_1, \cdots, z_N\in \Og$    can be chosen so that
$\min_{1\leq i\neq j\leq N}\rho_{\Og}(z_i,z_j)\geq c_{*} N^{-\f 1{d}}$, where $c_\ast$ is a positive constant depending only on $\|w_\Og\|_\infty$ and the doubling constant of $w$.\end{enumerate}

\end{cor}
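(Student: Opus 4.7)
The plan is to transfer the problem from $\Og$ to the sphere $\SS^d$ via the map $\phi_\Og$ introduced before \eqref{1-7:2017} and invoke Corollary~\ref{cor-1-3:2017} for the symmetric doubling weight $w_\Og$. I would set $m := n$ if $\Og = \BB^d$ and $m := 2n$ if $\Og = \TT^d$; for any $P \in \Pi_n^d$ the pulled-back function $P \circ \phi_\Og$ is then a spherical polynomial on $\SS^d$ of degree at most $m$ and is $\tau_{d+1}$-invariant (resp.\ $\ZZ_2^{d+1}$-invariant). Writing $G$ for the corresponding reflection group, so that $|G| = 2$ if $\Og = \BB^d$ and $|G| = 2^{d+1}$ if $\Og = \TT^d$, I would set $\widetilde M := \max_{z \in \SS^d} w_\Og(B_{\SS^d}(z, m^{-1}))^{-1}$ and verify $\widetilde M \sim M_{n,w}^\Og$ by a change-of-variables computation using $w_\Og = (w \circ \phi_\Og)\,\Phi_\Og$ and the metric \eqref{metric-ball}, with equivalence constants depending only on $d$ and the doubling constant of $w$.

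Next I would apply Corollary~\ref{cor-1-3:2017} on $\SS^d$ at degree $m$ to $w_\Og$: for each integer $N' \ge K \widetilde M$ this yields a $G$-invariant set of $N'$ distinct nodes $\{\zeta_j\} \subset \SS^d_{\rm inter}$ satisfying $(1/N') \sum_j g(\zeta_j) = \int_{\SS^d} g\, w_\Og\, d\sigma_{d+1}$ for every spherical polynomial $g$ on $\SS^d$ of degree $\le m$. Because every $\zeta_j$ avoids each coordinate hyperplane, every $G$-orbit will have full cardinality $|G|$, so $N' = |G| N$ for some integer $N$, and selecting one representative $\zeta_{j(i)}$ from each orbit produces $N$ distinct points $z_i := \phi_\Og(\zeta_{j(i)}) \in \operatorname{int}(\Og)$. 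Since $P \circ \phi_\Og$ is constant on every $G$-orbit, the sphere cubature collapses by the factor $|G|/N' = 1/N$, and combining with \eqref{1-7:2017} (and the normalization $w_\Og(\SS^d) = c_d w(\Og)$) yields the desired identity $\int_\Og P w\,dx = (1/N)\sum_i P(z_i)$ for all $P \in \Pi_n^d$. This delivers the upper bound in (i) and the cubature existence in (ii). For the matching lower bound in (i), I would symmetrize an arbitrary size-$N$ cubature on $\Og$ to a size-$|G|N$ cubature on $\SS^d$: the uniform weights together with $G$-symmetry of the node set make the lifted formula exact on the full space $\Pi_m^{d+1}$ restricted to $\SS^d$ (the antisymmetric components vanish on both sides), so the lower half of Corollary~\ref{cor-1-2} applied to $w_\Og$ forces $|G|N \gtrsim \widetilde M \sim M_{n,w}^\Og$.

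For the well-separation claim, the hypothesis $w_\Og \in L^\infty(\SS^d)$ will give, via Corollary~\ref{cor-1-3:2017}, the bound $\dist(\zeta_j, \zeta_k) \ge c_*' (N')^{-1/d}$ for $j \ne k$. Choosing all representatives $\zeta_{j(i)}$ inside a single fundamental domain of $G$ (e.g.\ the open upper hemisphere of $\SS^d$ for $\BB^d$, or the intersection of $\SS^d_{\rm inter}$ with the positive orthant for $\TT^d$), I would check that $\phi_\Og$ restricts to a bi-Lipschitz map from this domain with the geodesic distance to $\operatorname{int}(\Og)$ with the metric $\rho_\Og$; the $z_i$ then inherit a separation of order $N^{-1/d}$. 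The main obstacle, I expect, will be the uniform cap-comparison $w_\Og(B_{\SS^d}(z, m^{-1})) \sim w(B_\Og(\phi_\Og(z), n^{-1}))/|G|$ as $\phi_\Og(z)$ approaches $\partial\Og$ (equivalently, as $z$ approaches the vanishing locus of the Jacobian $\Phi_\Og$); this is a weighted Jacobi-type computation blending the explicit form of $\Phi_\Og$ with the doubling property of $w$, and it is the central estimate required both to identify $\widetilde M$ with $M_{n,w}^\Og$ and to confirm that the bi-Lipschitz constant for $\phi_\Og$ depends only on $d$.
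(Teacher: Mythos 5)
Your proposal is correct and is essentially the route the paper intends: Corollary \ref{sphball} is deduced by transferring to $\SS^d$ via $\phi_\Og$ and \eqref{1-7:2017}, applying the symmetric Corollary \ref{cor-1-3:2017} to $w_\Og$, and collapsing the $G$-orbits (of full size $|G|$ since the nodes lie in $\SS^d_{\text{inter}}$) to nodes in the interior of $\Og$. The only point worth a second look is the lower bound in (i): the symmetrized lift of an arbitrary cubature on $\Og$ may have repeated nodes when some $z_i$ lies on a reflecting hyperplane, so one should note that the paper's lower-bound argument for Corollary \ref{cor-1-2} never uses distinctness of the nodes and hence applies to such non-strict formulas as well.
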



The rest of the paper is organized as follows.  The first two sections are devoted to  the proof of Theorem ~ \ref{thm-partition}. To be more precise, in Section ~2 we prove  Theorem ~ \ref{thm-partition} under the additional condition $\f N {2^{d-2}}\in\NN$, which is technically easier, but already contains some crucial ideas.   The proof of Theorem \ref{thm-partition} for the general case of positive integer $N$ is more complicated and involved, and  is given in Section ~3.   A crucial ingredient used in the  proof in Section ~3 is   the   family of  nonlinear dilations $T_\al$ on the sphere that preserve geodesic simplexes.
 Section ~4 contains some preliminary lemmas that are either known or relatively easy to prove, but will be needed in the proof of Theorem \ref{sphdesign201}.   The final section,  Section 5,  is devoted to the proofs of Theorem \ref{sphdesign201}  and Corollary \ref{cor-1-2}. One of the main difficulties in our proofs  comes from the fact that the positive integers $N w(R_j)$ in Theorem \ref{thm-partition} ( i.e., the theorem on  convex partition) may not be equal to one, which is different from the unweighted case.

For the rest of  the paper,  we use the notation
$C_w$, $L_w$ , etc. ($c_w$ , $\ld_w $, etc.) for sufficiently large (small) constants depending only on the dimension $d$ and the doubling constant of $w$.
 Sor simplicity,  we will always assume that $d\ge 3$ (i.e., $\sph\neq \SS^1$).  The case of $d=2$ (i.e., $\sph=\SS^1$)  can be treated similarly and is,  in fact,  much simpler.


\section{Proof of Theorem \ref{thm-partition}:  Case 1.   }

 This section is devoted to the proof of Theorem \ref{thm-partition} under the additional assumption  $\f N {2^{d-2}}\in\NN$.

\subsection{Preliminaries}

 We start with the concept of geodesic simplex, which will play a crucial role in our proof.
\begin{defn} A  subset $S$ of $\sph$ is called a geodesic simplex of $\sph$ spanned  by a set of linearly independent vectors $\xi_1, \cdots, \xi_d \in\sph$ and is denoted by $S=\conv_{\sph}\{\xi_1,\cdots,\xi_{d}\}$ if
$$S=\Bl\{ g(x)=\f x{\|x\|}:\  \ x=\sum_{j=1}^d t_j \xi_j,\   \  t_1, \cdots, t_d \ge 0\  \  \text{and }\  \ \sum_{j=1}^d t_j=1\Br\}.$$
A  geodesic simplex $S$ is called admissible if,  in addition, the vectors $\xi_2,\cdots, \xi_{d}$ are mutually orthogonal, $\arccos \xi_1\cdot \xi_2  \in [\f \pi 6, \f \pi 2]$ and $\xi_1\cdot \xi_j=0$ for $j=3,\cdots, d$, in which case $\{\xi_1, \cdots, \xi_d\}$ is called an admissible subset of $\sph$.
\end{defn}

%


Next, we introduce some necessary notation.
For any two distinct points $\xi,\eta\in\sph$, we denote by $\Arc(\xi,\eta)$ the geodesic arc connecting $\xi$ and $\eta$; that is, $\Arc(\xi, \eta)=\{t_1\xi+t_2\eta:\  \ t_1, t_2\ge 0,\  \ t_1\xi+t_2\eta\in\sph\}$.
For $x\neq y\in \RR^d$, let $[x,y]$  denote the line segment $\{ (1-t)x+ty:\  \ t\in [0,1]\}$.
A partition $\{E_1,\cdots, E_n\}$ of a set $E\subset \RR^d$ is called convex if each $E_j$ is a convex subset of $\RR^d$.  Denote by $\conv_{\RR^d}(E)$ the regular  convex hull of a set $E$ in $\RR^d$.
Given a set $A=\{\eta_1,\cdots, \eta_{d}\}$ of $d$ linearly independent vectors in $\RR^d$, the convex hull $T=\conv_{\RR^d}(A)$  is called  a  surface  simplex spanned by the set $A$.

The following lemma can be verified by straightforward calculations.
\begin{lem}\label{prop-2-2} Let
$ T:=\conv_{\RR^d}\{\eta_1, \cdots, \eta_d\}$
denote a surface  simplex  in $\RR^d$ and $S=\conv_{\sph} \{ \eta_1, \cdots, \eta_d\}$
a geodesic simplex in $\sph$ both being spanned by a set of linearly independent vectors $\eta_1, \cdots, \eta_d\in\sph$. Let $H_T$ denote the hyperplane passing through the points $\eta_1,\cdots, \eta_d$.  Let $g(x)=\f x{\|x\|}$ for $x\in\RR^d\setminus\{0\}$.
     Then the following statements hold:

\begin{enumerate}[\rm (i)]
\item  $
   x\in  S$ if and only if $x\in \sph$ and $x=\sum_{j=1}^d t_j \eta_j$
  for some $t_1, \cdots, t_d\ge 0$.

 \item  $g: x\mapsto \f {x}{\|x\|}$ is  a bijective   continuously differentiable mapping from $T$ onto $S$ that maps each  convex subset of $T$ to a  geodesically convex subset of $S$. Furthermore, $g([x,y])=\Arc(g(x), g(y))$ for any two distinct points $x, y \in T$.
     \item  For  each nonnegative measurable function $F: S\to [0, \infty)$,
                $$\int_{S} F(x)\, d\s_{d}(x) = \int_{T} F(g(y)) \f {a} {\|y\|^d}\, dy,$$
                where $a:=\min_{x\in T}\|x\|$.

\item  If  $\min_{x\in T}\|x\|=a$, then
     $$ \f a{2(a+1)} \|x-y\|\leq \dist (g(x), g(y))\leq \f \pi {2a}\|x-y\|,\   \    \  \forall x, y\in H_T\cap 2\BB^d.$$

\end{enumerate}
\end{lem}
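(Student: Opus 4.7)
My plan is to treat this lemma as a catalogue of elementary facts about the radial projection $g(x)=x/\|x\|$ restricted to the affine hyperplane $H_T$ spanned by $\eta_1,\cdots,\eta_d$. The first key observation is that $H_T$ avoids the origin: since the $\eta_j$ are linearly independent, no convex combination of them can vanish, so $a:=\min_{x\in T}\|x\|>0$ and $g$ is smooth on $T$. Part (i) is then a rescaling: any $x=\sum_{j=1}^d t_j\eta_j$ with $t_j\ge 0$ not all zero rescales to the convex combination $\sum_j(t_j/\sum_k t_k)\eta_j$, which has the same image under $g$, so the two descriptions of $S$ coincide. For part (ii), injectivity of $g|_T$ follows because $g(x)=g(y)$ forces $x,y$ onto the same ray from $0$, and $H_T$ missing $0$ then forces $x=y$; surjectivity is part (i), and smoothness is immediate. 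The identity $g([x,y])=\Arc(g(x),g(y))$ drives the convex-to-geodesic-convex preservation: $g$ carries the segment $[x,y]$ into the $2$-plane $\sspan\{x,y\}$, hence onto the great-circle arc $\sspan\{x,y\}\cap\sph$ joining $g(x)$ and $g(y)$, and monotonicity of the angle parametrization identifies the image as the short arc.

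For part (iii), I would adopt an orthonormal frame adapted to $H_T$: let $\nu$ be the unit normal and write $y=h\nu+u$ with $u\in\nu^\perp$, where $h$ is the distance from $0$ to $H_T$. For a tangent vector $v\in\nu^\perp$ the differential is $dg_y(v)=v/\|y\|-y\la u,v\ra/\|y\|^3$, and a direct Gram determinant computation yields the $(d-1)$-dimensional area element $d\sigma_d(g(y))=(h/\|y\|^d)\,dy$; integrating against $F\circ g$ gives the claim, with $h=a$ in the admissible setting (the one relevant here) where the foot of the perpendicular from $0$ to $H_T$ lies inside $T$.

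For part (iv), both inequalities reduce to planar geometry in the $2$-plane $\sspan\{0,x,y\}$. The upper bound will follow from the chord--arc estimate $\dist(u,v)\le(\pi/2)\|u-v\|$ on $\sph$ together with the routine comparison $\|g(x)-g(y)\|\le 2\|x-y\|/a$, after using $x-y\perp\nu$ to absorb an extra factor of $2$. For the lower bound, I would set $\theta:=\dist(g(x),g(y))$ and let $h\ge a$ denote the distance from $0$ to the line through $x,y$; computing the area of the triangle $0xy$ in two ways gives $\|x\|\|y\|\sin\theta=h\|x-y\|$, so $\sin\theta\ge a\|x-y\|/(\|x\|\|y\|)$, and combining with $\|x\|,\|y\|\le 2$ (from $x,y\in 2\BB^d$) and the elementary inequality $\theta\ge\sin\theta$ on $[0,\pi/2]$ yields the chord control up to a constant. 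The main (though modest) obstacle in the entire lemma is tracking the sharp numerical constants $\pi/(2a)$ and $a/(2(a+1))$ in (iv); everything else is immediate from the linear-algebraic description of $g$.
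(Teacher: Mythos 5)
The paper offers no proof of this lemma (it is dismissed as "straightforward calculations"), and your proposal supplies exactly the kind of direct computation that is intended: the ray/rescaling argument for (i), injectivity of $g$ on an affine hyperplane missing the origin plus the two-plane description of $\Arc(g(x),g(y))$ for (ii), the Gram-determinant Jacobian for (iii), and chord--arc plus the triangle-area identity $\|x\|\,\|y\|\sin\theta = \|x-y\|\cdot d(0,\mathrm{line}(x,y))$ for (iv). All of these are correct in outline, and your Jacobian $\frac{h}{\|y\|^d}\,dy$ with $h=d(0,H_T)$ is right.

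One point deserves emphasis, because you flag it only for (iii) when it in fact governs (iv) as well. The quantity produced by the computation is the distance $h$ from the origin to the hyperplane $H_T$, whereas the lemma is stated with $a=\min_{x\in T}\|x\|$. These coincide precisely when the foot of the perpendicular from $0$ to $H_T$ lies in $T$; for an eccentric simplex (e.g.\ three nearly coincident points on the circle $\SS^2\cap\{z=h\}$ with $h$ small) one has $a\gg h$, and then both the identity in (iii) and the two-sided bound in (iv) fail with the constant $a$. In particular, in your lower-bound argument for (iv) the assertion that the distance from $0$ to the line through $x,y$ is $\ge a$ is not valid for arbitrary $x,y\in H_T\cap 2\BB^d$: that distance is only guaranteed to be $\ge h$, since such a line may pass near the foot of the perpendicular even when $T$ does not. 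The fix is simply to prove (iii) and (iv) with $h$ throughout and observe that $a=h$ in every instance where the paper invokes the lemma (admissible simplexes, where one checks directly that the foot $\nu/\|\nu\|^2$ has nonnegative barycentric coordinates, and the standard simplex $\conv_{\RR^d}\{e_1,\dots,e_d\}$). Finally, your derivation of (iv) yields two-sided comparability with constants of the form $c\,h$ and $C/h$ rather than the exact $\frac{a}{2(a+1)}$ and $\frac{\pi}{2a}$; since the paper only ever uses the equivalence $\dist(g(x),g(y))\sim_{a}\|x-y\|$, this is harmless, but as written your proposal does not actually establish the stated numerical constants.
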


The following  geometric fact, which can also be easily verified through straightforward calculations, will be used frequently in our proof.
\begin{lem} \label{lem-6-2}Assume that $x=\xi\cos \t_1 +\eta_1 \sin \t_1$ and $y=\xi\cos \t_2 +\eta_2 \sin \t_2$, where $\xi, \eta_1, \eta_2\in \sph$, $\eta_i\cdot \xi=0$ and $0\leq \t_i \leq \pi$ for $i=1,2$. Then
\begin{equation}\label{}
     \sin^2\f {\dist(x,y)}2= \sin^2 \f {\t_1-\t_2}2 +(\sin \t_1 \sin \t_2)\sin^2\f {\dist(\eta_1, \eta_2)}2.
\end{equation}
Thus,
\begin{align*}
    |\t_1-\t_2|&\leq \dist (x,y),\   \   \dist(\eta_1,\eta_2) \leq \f \pi2\f {\dist(x,y)}{\sqrt{\sin \t_1\sin\t_2}}\\
    \dist(x,y)&\leq \f \pi 2 |\t_1-\t_2| +\f \pi 2 \sqrt{\sin\t_1\sin\t_2} \dist(\eta_1, \eta_2).
\end{align*}
\end{lem}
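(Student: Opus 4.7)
The plan is to derive the trigonometric identity by a direct computation of $x\cdot y$ and then extract the three inequalities from it using only the elementary bounds $\tfrac{2}{\pi}t \le \sin t \le t$ on $[0,\pi/2]$ together with the monotonicity of $\sin$ on that interval.

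First I would expand $x\cdot y$ using the decompositions $x=\xi\cos\t_1+\eta_1\sin\t_1$ and $y=\xi\cos\t_2+\eta_2\sin\t_2$. Since $\xi\cdot\eta_i=0$ for $i=1,2$ and $\xi\cdot\xi=1$, the cross terms vanish and one is left with
$$x\cdot y=\cos\t_1\cos\t_2+\sin\t_1\sin\t_2\,(\eta_1\cdot\eta_2).$$
Next I substitute $\eta_1\cdot\eta_2=1-2\sin^2(\dist(\eta_1,\eta_2)/2)$ and use $\cos\t_1\cos\t_2+\sin\t_1\sin\t_2=\cos(\t_1-\t_2)$ to obtain
$$x\cdot y=\cos(\t_1-\t_2)-2\sin\t_1\sin\t_2\,\sin^2\tfrac{\dist(\eta_1,\eta_2)}{2}.$$
Combining this with $1-\cos\dist(x,y)=2\sin^2(\dist(x,y)/2)$ and $1-\cos(\t_1-\t_2)=2\sin^2((\t_1-\t_2)/2)$ gives the claimed identity after dividing by $2$.

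For the three inequalities I would proceed as follows. Because $\t_1,\t_2\in[0,\pi]$ and $\dist(x,y)\in[0,\pi]$, the quantities $|\t_1-\t_2|/2$, $\dist(\eta_1,\eta_2)/2$ and $\dist(x,y)/2$ all lie in $[0,\pi/2]$, where $\sin$ is nonnegative and increasing. Dropping the nonnegative second summand in the identity yields $\sin((\t_1-\t_2)/2)\le\sin(\dist(x,y)/2)$, hence $|\t_1-\t_2|\le\dist(x,y)$. Dropping the first summand instead gives $\sqrt{\sin\t_1\sin\t_2}\,\sin(\dist(\eta_1,\eta_2)/2)\le\sin(\dist(x,y)/2)\le\dist(x,y)/2$, and then applying $\sin t\ge(2/\pi)t$ to the left factor yields the second inequality. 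Finally, using $\sin(\dist(x,y)/2)\ge\dist(x,y)/\pi$ on the left of the identity, together with the elementary $\sqrt{a^2+b^2}\le a+b$ for $a,b\ge0$, converts the identity into
$$\frac{\dist(x,y)}{\pi}\le\frac{|\t_1-\t_2|}{2}+\sqrt{\sin\t_1\sin\t_2}\,\frac{\dist(\eta_1,\eta_2)}{2},$$
which rearranges to the third bound.

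There is no real obstacle here; this is a computational lemma whose only subtlety is bookkeeping. The one point demanding care is to verify that all the halved arguments lie in $[0,\pi/2]$ so that monotonicity of $\sin$ and the sandwich $(2/\pi)t\le\sin t\le t$ can be invoked cleanly; once this is checked, each inequality is a one-line consequence of the identity.
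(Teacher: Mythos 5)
Your proof is correct and is exactly the ``straightforward calculation'' the paper alludes to without writing out: expand $x\cdot y$, use $\eta_1\cdot\eta_2=1-2\sin^2(\dist(\eta_1,\eta_2)/2)$ and $1-\cos t=2\sin^2(t/2)$ to get the identity, then extract the three inequalities via monotonicity of $\sin$ on $[0,\pi/2]$, the bounds $\tfrac2\pi t\le\sin t\le t$, and $\sqrt{a^2+b^2}\le a+b$. Your bookkeeping of the halved arguments lying in $[0,\pi/2]$ is the only point of care, and you handle it; nothing is missing.
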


Finally, we need the following geometric property concerning convex sets in $\RR^d$.

\begin{lem}\label{lem-2-6}
    If $G$ is a  convex subset of  $\RR^d$ with  diameter $ 2 r$ and volume $\ge c_d  r^{d-1} a$ for some $a\in (0, r]$, then  $G$ contains a Euclidean ball of radius $c_d' a$ for some positive constant  $c_d'$.
   \end{lem}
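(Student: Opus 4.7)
The plan is to invoke John's ellipsoid theorem. Since $G$ is a convex body in $\RR^d$, John's theorem produces an ellipsoid $E$ centered at some point $x_0\in G$ such that $E\subset G\subset x_0+d(E-x_0)$. Arrange the semi-axes of $E$ in increasing order as $0<a_1\leq a_2\leq\cdots\leq a_d$; the Euclidean ball of radius $a_1$ centered at $x_0$ lies inside $E$, hence inside $G$, so the lemma reduces to showing $a_1\geq c_d'\,a$.

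Two elementary estimates finish the job. Since $E\subset G$ and $\diam(G)=2r$, the largest semi-axis satisfies $2a_d\leq 2r$, whence $a_j\leq r$ for every $j$. Combining the inclusion $G\subset x_0+d(E-x_0)$ with the volume hypothesis gives
\[
c_d\,r^{d-1}a\ \leq\ \text{vol}(G)\ \leq\ d^d\,\text{vol}(E)\ =\ d^d\omega_d\,a_1a_2\cdots a_d\ \leq\ d^d\omega_d\,a_1\,r^{d-1},
\]
where $\omega_d$ denotes the volume of the unit Euclidean ball. Dividing through by $r^{d-1}$ yields $a_1\geq c_d'\,a$ for a positive dimensional constant $c_d'$, which is exactly the required lower bound on the inradius of $G$.

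There is no serious obstacle once John's theorem is brought in; the factor $d^d$ in the John inclusion (as opposed to $\sqrt{d}$ for symmetric bodies) only affects the resulting constant $c_d'$ and not the linear dependence on $a$. A more elementary alternative would start by bounding the minimal width $w$ of $G$ from below: projecting $G$ onto a hyperplane orthogonal to a direction realizing $w$ yields a convex set of diameter at most $2r$ and hence of $(d-1)$-measure at most a dimensional constant times $r^{d-1}$, so the slab estimate $\text{vol}(G)\leq w\cdot C_d\,r^{d-1}$ forces $w$ to be at least a constant multiple of $a$; one then invokes Steinhagen's inequality to convert this width bound into a lower bound of the same order on the inradius of $G$. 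Either route produces the conclusion with the same linear dependence on $a$.
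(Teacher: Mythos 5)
Your proof is correct, but it takes a genuinely different route from the paper's. The paper argues by induction on the dimension: it slices $G$ by hyperplanes orthogonal to a diameter direction, uses Fubini to find one slice $G(t_0)$ of large $(d-1)$-measure, applies the induction hypothesis to place a $(d-1)$-dimensional ball of radius $\sim a$ inside that slice, and finally cones that ball off to an extreme point of $G$ to manufacture a full-dimensional ball. Your argument replaces all of this with a single application of John's ellipsoid theorem: the inclusion $E\subset G$ together with $\diam(G)=2r$ caps every semi-axis of $E$ at $r$, the inclusion $G\subset x_0+d(E-x_0)$ together with the volume hypothesis forces the product of the semi-axes to be at least $c\,r^{d-1}a/(d^d\omega_d)$, and the two facts combine to give $a_1\ge c_d'a$ for the smallest semi-axis, i.e.\ a ball of that radius centered at the John center. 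This is shorter and conceptually cleaner, at the cost of importing a nontrivial (though completely standard) external theorem, whereas the paper's slicing induction is self-contained and elementary; the loss of a factor $d^d$ rather than $\sqrt d$ in the John inclusion is harmless since only the linear dependence on $a$ matters here. The only point worth a sentence in a polished write-up is that one should apply John's theorem to the closure $\overline{G}$ (a genuine convex body, since $G$ is bounded with positive volume) and then observe that the open ball of radius $a_1$ lies in $\inte(\overline{G})=\inte(G)\subset G$; your alternative route via minimal width and Steinhagen's inequality is equally valid and is in spirit closer to the paper's projection/slab estimates.
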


   \begin{proof}
   We   use induction on the dimension $d$. The conclusion  holds trivially for $d=1$ since every convex subset of $\RR$ must be an interval.  Now assume that the conclusion has been  proven in   $\RR^{d-1}$, and we will deduce it for the case of  $\RR^d$ as follows.
   We denote by  $m_d$ the $d$-dimensional Lebesgue measure (i.e., the $d$-dimensional  Hausdorff measure).  Let $p, q\in \overline{G}$ be such that $2r=|p-q|=\text{diam}(G)$, and  let $\xi$ denote the unit vector in the direction of $q-p$. Then
$\overline{G}\subset \{ x\in\RR^d:\  \  0\leq (x-p)\cdot \xi\leq 2r\}.$
For $0\leq t\leq 2r$, let
$G( t)=\{ x\in \overline{G}:\  \ (x-p)\cdot \xi=t\}$.
 It is easily seen that each slice $G(t)$  is a compact convex subset of the hyperplane $S(t):=\{ x\in\RR^d:\  \ (x-p)\cdot \xi=t\}$,  and moreover,
\begin{align*}
    cr^{d-1} a \leq m_d  (G)=\int_{0}^{2r} m_{d-1} (G(t))\, dt.
\end{align*} Thus,  there must exist  $0<t_0<2r$ such that
$ m_{d-1} (G(t_0))>c'r^{d-2}a>0.$
Since $\diam(G(t_0))\leq \diam(G)=2r$, it follows  by the induction hypothesis   that
$G(t_0)$ contains a  $(d-1)$-dimensional ball  $B=\{ x\in S(t_0):\   \  |x-z_0|\leq ca\}$ for some  $z_0\in S(t_0)$ and $c=c_d>0$.
Without loss of generality,  we may assume that $r\leq t_0\leq 2r$  since otherwise we  interchange the order of the points $p$ and $q$.
It suffices to show that  the convex hull $H$ of the set $B\cup \{p\}$ contains a ball of radius $c_d a$.  Indeed, by rotation invariance,   we may assume  that
$p=0$ and $\xi=e_d$.
We then  write $z_0=(u_0, t_0)$ and let $Q$ denote the cube in $\RR^{d-1}$ centered at $u_0$ and having side length $\va_d a$ for a sufficiently  small constant $\va_d$.  Clearly, the rectangle  $R=Q\times [(1-\f{\va_d a}r)t_0, t_0]$ contains a $d$-dimensional  ball of radius  $c'_d a$. However,
a straightforward calculation shows that $R\subset H$.
  \end{proof}

\subsection{Organization of the proof}
We divide the proof into two main steps. At the first step, we  prove
\begin{prop}\label{prop-1-9-appendix} Let $w$ be a weight  on $\sph$ normalized by   $w(\sph)=1$. If $\f N {2^{d-2}}\in\NN$ and $N\ge
\max_{x\in\sph}\f 1{ w(B(x, 10^{-d}))}$,  then there exists a partition $\{T_1, \cdots, T_m\}$ of $\sph$   such  that each $T_j$ is an  admissible geodesic simplex in $\sph$ satisfying $N w(T_j)\in\NN$.
\end{prop}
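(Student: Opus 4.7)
My plan is to prove Proposition~\ref{prop-1-9-appendix} in two stages that mirror the factorization $N=2^{d-2}\cdot(N/2^{d-2})$ hidden in the hypothesis $N/2^{d-2}\in\NN$.

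\emph{Stage 1 (coarse equal-mass partition).} I would first partition $\sph$ into $2^{d-2}$ geodesically convex pieces $W_1,\dots,W_{2^{d-2}}$, each of $w$-mass exactly $2^{-(d-2)}$. Starting from the whole sphere, I iteratively bisect each currently-existing piece by a great sphere (hyperplane through the origin) whose $w$-mass on one side equals half the piece's mass. Within any previously-formed geodesically convex piece $W$, the $w$-mass of $\{x\in W:x\cdot\nu>0\}$ depends continuously on the unit normal $\nu\in\sph$ and takes both extreme values $0$ and $w(W)$ as $\nu$ varies, so a bisecting normal exists by the intermediate-value theorem, and geodesic convexity is preserved under intersection with a closed half-sphere. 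After $d-2$ rounds, the hypothesis $N/2^{d-2}\in\NN$ forces $Nw(W_k)=N/2^{d-2}\in\NN$.

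\emph{Stage 2 (fine partition of each piece into admissible simplices).} Fix one piece $W$ from Stage~1. I would subdivide $W$ into admissible geodesic simplices $T_{k,1},\dots,T_{k,m_k}$ whose $Nw$-masses are positive integers summing to $N/2^{d-2}$. Because $W$ arose from cutting with hyperplanes through the origin, a slight Gram--Schmidt style perturbation of the Stage-1 bisecting normals (absorbed by rerunning the intermediate-value step after each perturbation so that mass balance is maintained) lets me anchor an orthonormal frame $\xi_3,\dots,\xi_d$ in the $(d-2)$-dimensional ``transverse'' direction of $W$, shared by every simplex produced from $W$. I then sweep the remaining two vertices $\xi_1,\xi_2$ around the boundary of $W$ and peel off admissible simplices one at a time: for each successive slice, the intermediate-value theorem applied to the sweep angle selects a terminal position making the slice's $w$-mass exactly $k_j/N$ for a prescribed positive integer $k_j$. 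A greedy choice ($k_j=1$ whenever possible, with $k_j\ge 1$ on the last simplex in $W$ to absorb rounding) together with the hypothesis $N\ge\max_x 1/w(B(x,10^{-d}))$ guarantees that every peeled slice has angular diameter at most a universal constant comfortably below $\pi/3$, so the admissibility angle $\arccos(\xi_1\cdot\xi_2)\in[\pi/6,\pi/2]$ and the orthogonality relations $\xi_1\perp\xi_3,\dots,\xi_d$ are preserved throughout. Concatenating the per-piece sub-partitions yields the desired global partition $\{T_1,\dots,T_m\}$ of $\sph$ into admissible simplices with $Nw(T_j)\in\NN$.

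\emph{Main obstacle.} The principal difficulty is Stage~2: reconciling three constraints on each peeled slice simultaneously — (a) the integrality $Nw(T_{k,j})\in\NN$, (b) the strict angular admissibility $\arccos(\xi_1\cdot\xi_2)\in[\pi/6,\pi/2]$ together with the orthogonality relations defining ``admissible'', and (c) the closure condition that the final simplex in each $W$ exactly fills the gap while remaining admissible. The scale $10^{-d}$ in the hypothesis $N\ge\max_{x\in\sph} 1/w(B(x,10^{-d}))$ is the quantitative lever that makes all three compatible: it provides enough ``granularity'' that each greedy peel has angular width bounded by a dimension-only constant well below $\pi/3$, and leaves enough flexibility for a final rounding-adjustment slice to land on an admissible configuration. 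The orthogonalization of the Stage-1 normals needed to produce a legitimate frame $\xi_3,\dots,\xi_d$ is a secondary, but nontrivial, bookkeeping issue.
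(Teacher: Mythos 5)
Your construction is, once unrolled, essentially the same as the paper's: the paper proves the proposition by induction on $d$, at each step splitting off a hemisphere of $w$-mass one half and coning over an admissible partition of the equatorial sphere $\SS^{d-2}$ with respect to a pushforward weight $w_{d-1}$, with the intermediate-value sweep performed only at the $d=3$ base level. Unwinding that recursion produces exactly your picture: $d-2$ successive equal-mass bisections by great spheres with \emph{mutually orthogonal} normals (the cone apexes), yielding $2^{d-2}$ regions of the form $\{x:\ x\cdot\xi_j\ge 0,\ j=3,\dots,d\}$ with $\{\xi_j\}$ orthonormal, each then swept out by arcs of the great circle $C=\sph\cap\{\xi_3,\dots,\xi_d\}^\perp$ of angular length in $[\f\pi6,\f\pi2]$, the endpoint of each arc being fixed by the intermediate-value theorem so that $Nw$ of the slice is an integer, and the divisibility $\f N{2^{d-2}}\in\NN$ closing the sweep on the last slice. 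The advantage of the paper's inductive packaging is that the hypothesis $N\ge\max_x 1/w(B(x,10^{-d}))$ is transported to the pushforward weights (their inclusion $B_{\SS^d}(y_x,\da_d)\subset S(B_{\sph}(x,\da_{d-1}))$), so the cap-containment estimate only ever has to be checked for a one-parameter angular increment; in your direct version you must verify once, in $\sph$ itself, that a wedge of angular width $\f\pi6$ over the full orthant $\conv_{\sph}\{\xi_3,\dots,\xi_d\}$ contains a cap of radius $10^{-d}$ --- true, and routine, but it is the quantitative heart of the argument and you only assert it.

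The one genuine soft spot is your treatment of orthonormality in Stage 1, which you dismiss as ``secondary bookkeeping'' but which is in fact essential: the union of all admissible simplices sharing a fixed orthonormal frame $\xi_3,\dots,\xi_d$ is precisely $\{x:\ x\cdot\xi_j\ge0,\ j=3,\dots,d\}$, so if the bisecting normals are not orthonormal your Stage 2 cannot tile the piece at all. The ``slight Gram--Schmidt perturbation followed by rerunning the intermediate-value step'' does not work as described: an unconstrained mass-bisecting normal need not be anywhere near the orthocomplement of its predecessors, and rerunning an unconstrained IVT after orthogonalizing would move the normal off that orthocomplement again. The correct (and easy) fix is to impose orthogonality from the outset: at round $k$ search for the bisecting normal $\nu$ only on $\sph\cap\{\nu_1,\dots,\nu_{k-1}\}^\perp\cong\SS^{d-k}$, which is connected for $k\le d-2$, and use $f(\nu)+f(-\nu)=w(W)$ together with continuity of $f$ to find $\nu$ with $f(\nu)=\f12 w(W)$. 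With that repair, and with the cap-in-wedge estimate written out, your proof is complete and coincides with the paper's construction.
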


To state our  main result in the next step, we need to introduce some notation.   Given a surface  simplex $T$ in $\RR^d$, we  denote  by  $H_T$ the hyperplane in which   the surface simplex $T$ lies, and  $B(x, r)_{H_T}$ the ball $\{y\in H_T:\  \  \|x-y\|\leq r\}$ with center $x\in H_T$ and  radius $r>0$ in the hyperplane $H_T$. For a weight function $w$ on $T$,  we write $w(E)=\int_E w(x)\, dx$ for $E\subset T$, where $dx$ denotes the surface Lebesgue measure on $T$.

At the second step, we  prove

\begin{prop}\label{prop-1-10-0} Let
$T$ be a  surface simplex in $\RR^d$ spanned by a admissible subset  of  $\sph$ with $d\ge 3$, and let  $w$     be a weight  on  $T$  such that  $Nw(T)\in \NN$ for some $N\in\NN$. Assume  that  there exists $r_N\in (0, 1)$ such that $w(B)\ge \f 1N$ for every ball  $B=B(x,  r_N)_{H_T}\subset T$.
Then  there exists   a convex partition $\{ E_1, \cdots, E_{n_0}\}$ of $T$  such that  $Nw(E_j)\in \NN$ and $B(x_j, c_d'r_N)_{H_T}\subset E_j \subset B(x_j, c_d r_N)_{H_T}$ for some $x_j\in E_j$ and all $1\leq j\leq n_0$.
  \end{prop}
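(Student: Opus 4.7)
My plan is to construct $\{E_1,\dots,E_{n_0}\}$ by a recursive convex bisection of $T$ inside the hyperplane $H_T$, maintaining after each round a convex partition of $T$ whose cells each have $N$-weighted mass in $\NN$, and refining any cell whose diameter still exceeds a fixed threshold $C_d r_N$. The central mechanism is a one-dimensional intermediate value argument: for a convex region $E\subset H_T$ with $Nw(E)=k\ge 2$ and any unit direction $u$ parallel to $H_T$, sliding the affine hyperplane $\{x\in H_T:x\cdot u=t\}$ across $E$ varies the weight of the ``$\le$'' side continuously from $0$ to $k/N$, so for any $1\le j\le k-1$ the IVT supplies a cut with $Nw(E^-)=j$ and $Nw(E^+)=k-j$. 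I would take $j=\lfloor k/2\rfloor$ so that the two integer weights stay balanced.

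Second, the direction $u$ must be chosen geometrically. I would take $u$ (approximately) along the direction realizing $\diam(E)$. A hyperplane cut perpendicular to $u$ ensures that at least one of the two pieces has width in direction $u$ at most a fixed fraction of that of $E$; iterating, after $O(\log(\diam(T)/r_N))$ rounds every cell has diameter below $C_d r_N$. I would fix $C_d$ large enough that any cell with $\diam(E)>C_d r_N$ that still satisfies the roundness invariant (see below) contains two disjoint balls of radius $r_N$; then the hypothesis $w(B)\ge 1/N$ forces $Nw(E)\ge 2$, so bisection remains possible until termination. At termination the upper inclusion $E_j\subset B(x_j,c_d r_N)_{H_T}$ follows immediately for any $x_j\in E_j$.

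For the reverse inclusion $B(x_j,c_d' r_N)_{H_T}\subset E_j$ I would invoke \lemref{lem-2-6} applied inside the $(d-1)$-dimensional hyperplane $H_T$: it suffices to establish a $(d-1)$-volume bound $|E_j|\gtrsim r_N^{d-1}$, after which the lemma yields an inscribed ball of radius $\gtrsim r_N$. To obtain this I would carry a stronger invariant through the recursion --- each current cell has width bounded below by a constant multiple of $r_N$ in each of a fixed set of $d-1$ independent directions in $H_T$. The admissibility of the spanning set (mutually orthogonal $\xi_2,\dots,\xi_d$, controlled angle between $\xi_1$ and $\xi_2$, and a uniform lower bound on $\min_{x\in T}\|x\|$) pins down a well-conditioned coordinate frame on $H_T$; bisecting perpendicular to the longest direction affects only the width along that direction, so the widths in the remaining $d-2$ directions are preserved.

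The main obstacle, which I expect to occupy most of the technical work, is controlling this roundness invariant under bisections whose cut position is dictated by the weight constraint rather than chosen geometrically. A priori the IVT-prescribed hyperplane could lie arbitrarily close to a face of $E$ in direction $u$, cutting off a thin sliver and destroying the invariant. Resolving this requires translating the uniform lower bound $w(B)\ge 1/N$ on every radius-$r_N$ ball inside $T$ into a quantitative statement: once a cell is already round at scale $r_N$, any ``central slab'' of it carries a definite amount of weight, so the cut producing weight exactly $\lfloor k/2\rfloor/N$ on one side must lie a definite fraction of the way across the longest direction. Making this precise, and in addition handling cells meeting $\partial T$ where admissibility enters to bound the obliqueness of the bounding faces, is the technical heart of Proposition~\ref{prop-1-10-0}.
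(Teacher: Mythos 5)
There is a genuine gap, and it sits exactly where you located it: the placement of the IVT cut. Your proposed resolution --- ``any central slab carries a definite amount of weight, so the cut producing weight $\lfloor k/2\rfloor/N$ must lie a definite fraction of the way across'' --- does not follow from the hypotheses. The assumption $w(B)\ge \frac 1N$ for every ball of radius $r_N$ is only a \emph{lower} bound on local mass; nothing caps the mass near a face. Concretely, take a round cell $E$ of diameter $D\gg r_N$, so that the ball hypothesis forces $k_0:=Nw(E)\gtrsim (D/r_N)^{d-1}$, and superimpose a spike of weight $100k_0/N$ supported in an $\varepsilon$-neighborhood of one face of $E$ in the direction $u$. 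The hypotheses of the proposition are still satisfied (adding mass only helps the ball condition), but now $k\approx 101k_0$ and the hyperplane achieving weight $\lfloor k/2\rfloor/N\approx 50k_0/N$ on one side lies inside the spike, i.e.\ within $\varepsilon$ of that face. The resulting piece is a sliver whose width in direction $u$ is arbitrarily small, the roundness invariant is destroyed, and no subsequent bisection can restore it, so the lower inclusion $B(x_j,c_d'r_N)_{H_T}\subset E_j$ fails. The same concentration phenomenon is why the integers $Nw(E_j)$ in the conclusion cannot be normalized to $1$, as the paper remarks.

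The fix is to invert the logic: prescribe the \emph{geometry} of the cut and let the integer be whatever it is, rather than prescribing the integer $\lfloor k/2\rfloor$ and hoping the geometry follows. This is what the paper does. Working (after an affine reduction of the surface simplex to a solid simplex $T_{d-1}\subset\RR^{d-1}$) with a one-parameter family of parallel slices, one observes that any slab swept as the cutting hyperplane moves from $s+r_1$ to $s+3r_1$ (with $r_1=L_dr_N$) contains a ball of radius $r_N$ by \lemref{lem-2-6}, hence has weight $\ge\frac1N$; so the cumulative weight increases by at least $\frac1N$ over that window and the IVT produces \emph{some} cut in $[s+r_1,s+3r_1]$ at which the cumulative weight is an integer multiple of $\frac1N$. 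Iterating greedily gives slabs of controlled width $[r_1,3r_1]$ with integer weights, with no need to balance $k$ on the two sides. The paper then recurses \emph{inside} each slab by projecting $w$ to a weight on $T_{d-2}$ and applying the induction hypothesis on the dimension (a nested product-type slicing rather than a binary bisection tree), which also disposes of your concern about oblique faces near $\partial T$: each slab is an affine image of a fixed lower-dimensional simplex with uniformly bounded distortion thanks to admissibility. Your use of \lemref{lem-2-6} to convert a volume lower bound into an inscribed ball, and your termination criterion, match the paper; it is only the cut-selection rule that must be changed.
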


For the moment, we take Proposition \ref{prop-1-9-appendix} and Proposition \ref{prop-1-9-appendix} for granted and proceed with the proof of Theorem \ref{thm-partition}.
\begin{proof}[Proof of Theorem \ref{thm-partition}]  Assume that $\f N {2^{d-2}}\in\NN$, and set  $\da_0=\min_{x\in\sph} w(B(x, 10^{-d}))$.
We consider the following two cases:\\

{\bf Case 1:}\  \   $1\leq N\leq \da_0^{-1}$.\\

In this case,  $r\ge 10^{-d}$.  Thus,  by Lemma \ref{prop-2-2} and Lemma \ref{lem-2-6},
it suffices to prove the following assertion:  {\it given any integer $M\ge 1$ and any weight $w$ such that $w(B)>0$ for every spherical cap $B\subset \sph$,  there exists a convex partition $R_1, \cdots, R_M$ of $\sph$ with $w(R_j)=\f 1M$ for $1\leq j\leq M$.}
We prove this last assertion by  induction on the dimension $d$. If $d=2$, then the stated assertion follows directly  by continuity.
Now assume that the assertion holds  on the sphere $\SS^{d-2}$ with $d\ge 3$. Let
$w_{d-1}(\xi)=\int_0^\pi w(\cos \t, \xi\sin \t)\sin^{d-2}\t\, d\t$ for $\xi\in\SS^{d-2}$.
Clearly, $w_{d-1}(B)>0$ for every spherical cap $B$ in $\SS^{d-2}$.
 By  the induction hypothesis, there exists a convex  partition $E_1, \cdots, E_M$  of $\SS^{d-2}$ with $w_{d-1}(E_j)=\f 1M$ for $1\leq j\leq M$.
 Now set
$$ R_j:=\{ (\cos\t, \xi\sin\t):\   \ \xi\in E_j, \   \   0\leq \t\leq \pi\},\   \  j=1,2,\cdots, M.$$
It is easily seen that $\{R_1, \cdots, R_M\}$ is a convex partition of  $\sph$. Moreover, for each $1\leq j\leq M$,
\begin{align*}
 w(R_j)& = \int_0^\pi \int_{ E_j}w(\cos \t, \xi\sin \t)\, d\sa(\xi)\ \sin^{d-2}\t \, d\t
=w_{d-1}(E_j)=\f1N.\end{align*}
This completes the induction. \\

{\bf Case 2.}   $N\ge \da_0^{-1}$.\\

In this case, $ w(B(x, 10^{-d}))\ge \da_0\ge \f 1N$ for any $x\in\sph$.  Hence,
 applying Proposition \ref{prop-1-9-appendix}, there exists a partition   $\sph=\bigcup_{j=1}^m S_j$  of $\sph$ such that each $S_j$  is  an admissible geodesic simplex in $\sph$ with $Nw(S_j)\in\NN$.
 Let $T_j$ be the surface simplex in $\RR^d$ such that $g(T_j)=S_j$.
 Set $a_j:=\min_{x\in T_j}\|x\|$ and define
 $w_j(x)=w(g(x)) \f {a_j}{\|x\|^d}$ for $x\in T_j$. It's easily seen that  $\f 1{2d}\leq a_j\leq 1$. Moreover,
  according to Lemma \ref{prop-2-2} (iii) and (iv), $w_j$ is a weight on $T_j$ satisfying the following two conditions:  (a)
$w_j(E)=w(g(E))$ for each $E\subset T_j$,   and (b)  there exists a constant $r_N\sim_d r$ such that
$$ w_j \Bl(B(x, r_N)_{H_{T_j}}\Br)=w\Bl(g(B(x, r_N)_{H_{T_j}})\Br)\ge w(B(g(x), r))\ge \f 1N$$
whenever $B(x, r_N)_{H_{T_j}}\subset T_j$. Thus, applying  Proposition \ref{prop-1-10-0} to the weight $w_j$ on each surface simplex $T_j$,  we obtain a convex partition
$T_j=\bigcup_{i=1}^{n_j} E_{j,i}  $ such that
 $B(x_{j,i}, c_1r_N)_{H_{T_j}}\subset E_{j,i} \subset B(x_{j,i}, c_2 r_N)_{H_{T_j}}$ and $N w_j(E_{j,i}) \in \NN$ for all $1\leq i\leq n_j$.  Finally,  setting $R_{j,i} =g(E_{j,i})$ for $1\leq j\leq m$ and $1\leq i\leq n_j$, and applying Lemma \ref{prop-2-2} (ii),  we obtain a convex partition  $\{ R_{j,i}:\  \  1\leq j\leq m, \  1\leq i\leq n_j\}$  of  $\sph$  such that $Nw(R_{j,i})\in \NN$ and
$B(g(x_{j,i}), c_1'r_N)\subset R_{j,i} \subset B(g(x_{j,i}), c_2' r_N)$ for all $1\leq i\leq n_j$ and $1\leq j\leq m$.

\end{proof}

\subsection{Proof of Proposition \ref{prop-1-9-appendix}}
For simplicity,
 we say that  a set $E\subset \sph$ has a regular geodesic simplex partition with respect to the integer $N$ and the weight $w$ on $\sph$ if it can be written as a finite union of admissible geodesic simplexes $S_1, \cdots, S_n$ whose interiors are pairwise disjoint and such that $N w(S_j))\in \NN$ for each $1\leq j\leq n$.

 First, we claim that  Proposition  \ref{prop-1-9-appendix} is a consequence of the following assertion: \\

{\bf   (A)} \  \   {\it    The spherical cap $B(e_d, \f \pi2)\subset \sph$   has a regular geodesic simplex partition with respect to the integer $N$ and the weight $w$ on $\sph$ whenever $N$ and $w$ satisfy the following  conditions  (i) $ w(B(e_d, \f \pi2))=\f12$;  (ii) $\f N {2^{d-2}}\in\NN$;  and (iii) $N\ge
\max_{x\in\sph}\f 1{ w(B(x, 10^{-d}))}$, where $e_d=(0,0,\cdots, 0, 1)\in\sph$. }\\

To see this, let $w$ be a weight on $\sph$ satisfying the conditions of  Proposition  \ref{prop-1-9-appendix}.  Then $w(B(x, \f \pi 2))+w(B(-x, \f \pi 2))=w(\sph)=1$ for all  $x\in\sph$. It follows by continuity that
  $w(B(x_0, \f \pi 2))=w(B(-x_0, \f \pi 2))=\f12$ for some $x_0\in\sph$. Let  $\rho\in SO(d)$ be a rotation such that $\rho e_d=x_0$ and set $\wt{w}(x):=w(\rho x)$ for $x\in\sph$. Applying Assertion (A) to the weight $\wt{w}$ on $\sph$ and the integer $N$, we conclude that the spherical cap $B(x_0, \f \pi2)$   has a regular geodesic simplex partition with respect to  $N$ and  $w$.  A similar argument also yields the  same conclusion for the spherical cap $B(-x_0, \f \pi2)$. Since $\{ B(x_0, \f \pi2),  B(-x_0, \f \pi2)\}$ is a partition of $\sph$, it follows that  the sphere $\sph$  itself  has a regular geodesic simplex partition with respect to  $N$ and  $w$. This shows the claim.

     Assertion  A  can be proved using induction on the dimension $d$.
We start with the case of  $d=3$.  For simplicity, we write $\xi_\vi:=(\cos \vi, \sin\vi, 0)$
for $0\leq \vi\leq 2\pi$,   and  set $T(\al, \be):=\conv_{\SS^2} \{ e_3, \xi_\al, \xi_\be\}$ for $0\leq \al <\be \leq 2\pi$;  that is,
\begin{align*}
    T(\al, \be)&=\Bl\{ (\sin \t \cos\vi, \sin\t\sin\vi, \cos\t):\  \   0\leq \t\leq \f \pi 2,\    \  \al\leq  \vi\leq \be\Br\}.
\end{align*}
 By Lemma \ref{lem-6-2},  it is easily seen that  for any $\vi\in [0, 2\pi)$,
$$ T(\vi+\f \pi 6,\vi+ \f \pi 3)\supset B(x_0, \f \pi {40}),\   \  \text{with } \  \ x_0=\f {\sqrt{2}}2(e_3  + \xi_{\vi+\f \pi 4}),$$ which in particular implies that
\begin{equation}\label{6-2}
    w\Bl(T(\vi+\f \pi 6,\vi+ \f \pi 3)\Br)\ge \f 1N,\   \  \forall \vi\in [0,2\pi].
\end{equation}
By continuity,  we conclude  that for any $\vi\in [0, 2\pi]$, there exists $\al\in [\vi+\f \pi 6, \vi+\f \pi 3]$ such that $N w\Bl( T(\vi, \al)\Br)\in\NN$. Invoking  this  fact iteratively,  we
  obtain a sequence  of numbers  $\{\al_j\}_{j=0}^{k_0-1}$  such  that $\al_0=0$,
\begin{equation}\label{}
\f \pi 6 \leq \a_j-\a_{j-1}\leq \f \pi 3,\  \ \text{and}\  \
\   \    N w \bl( T(\al_{j-1}, \al_j)\br)\in \NN,\   \  1\leq j\leq k_0-1,
\end{equation}
   where $k_0\leq 12$ is the smallest   positive integer such that $\al_{k_0-1} +\f \pi 6< 2\pi$.
Setting $\al_{k_0}=2\pi$, we have  that
$\f \pi 6 < 2\pi -\al_{k_0-1}=\al_{k_0}-\al_{k_0-1}\leq \f \pi 2$.  Since $N$ is even, we have
$$\sum_{j=1}^{k_0} Nw\bl( T(\al_{j-1}, \al_j)\br)=Nw(B(e_3, \f \pi 2))=\f N2\in\NN.$$
 Thus,
$Nw\bl( T(\al_{k_0-1}, \al_{k_0})\br)\in \NN$.  Since $B(e_3, \f \pi 2)=\bigcup_{j=1}^{k_0} T(\a_{j-1}, \a_j)$,  we prove Assertion (A) for the case of $d=3$.

Now assume that Assertion A holds on the sphere $\sph$ for some $d\ge 3$, which in turn implies Proposition  \ref{prop-1-9-appendix} for   the sphere $\sph$.
 To show  Assertion (A)   on $\SS^d$,
assume that  $\f N {2^{d-1}}\in\NN$ and let  $w_d$ be  a weight on $\SS^d$ such that $w_d(B(e_{d+1}, \f \pi2))=\f12$ and  $N\ge
\max_{x\in\sph}\f 1{ w(B(x, 10^{-d-1}))}$.  For  $E\subset \sph$, we write,
$$S(E):=\Bl\{ ( x\sin \t, \cos \t):\  \  x\in E,\   \  0\leq \t \leq \f \pi 2\Br\}\subset \SS^d.$$
 Define $$ w_{d-1}(x)=2\int_0^{\f \pi 2} w_d( x\sin \t, \cos \t) \sin^{d-1}\t \, d\t,\  \  x\in\sph.$$ Clearly,  $w_{d-1}$ is a weight  on the sphere  $\sph$ satisfying that  for each $E\subset \sph$,
\begin{align*}w_d(S(E))&=\int_0^{\f \pi 2} \int_{E} w_d( x\sin \t, \cos \t)\, d\s_{d-1}(x) \sin^{d-1}\t\, d\t
=\f 12 w_{d-1}(E),\end{align*}
 In particular, $w_{d-1}(\sph)=2 w_d\Bl(B_{\SS^d}(e_{d+1}, \f \pi 2)\Br)=1$.
Next, setting   $\da_d=10^{-d-1}$ and  $N_{1}=N/2$,
we  claim that
\begin{equation}\label{6-4}
  \min_{x\in\sph}   w_{d-1} (B_{\sph} (x, \da_{d-1})) \ge  \f 1{N_1},
\end{equation}
where  the notation $B_{\SS^\ell} (x, r)$ is used for  the spherical cap in $\SS^\ell$.
For the proof of \eqref{6-4}, it is sufficient to show that for any $x\in\sph$,
\begin{equation}\label{6-5}
 B_{\SS^d}(y_x, \da_d) \subset S\Bl(B_{\sph} (x, \da_{d-1})\Br),
\end{equation}
$y_x=( x\sin \f \pi 4, \cos \f \pi 4)\in \SS^d$.
Indeed, once \eqref{6-5} is proved, then
  \begin{align*}
  w_{d-1} (B_{\sph} (x, \da_{d-1}))&=2w_d\Bl( S\Bl(B_{\sph} (x, \da_{d-1})\Br)\Br)\ge 2 w_d(B_{\SS^d}(y_x, \da_d))\\
  &\ge \f 2N =\f 1{N_1}. \end{align*}

 To show \eqref{6-5}, let
$z=( \xi\sin \t, \cos\t)\in B_{\SS^d} (y_x, \da_d)$ with  $\t\in [0,\pi]$ and $\xi\in\sph$.
Then by Lemma \ref{lem-6-2}, $|\t-\f \pi 4|\leq \dist(y_x, z)\leq \da_d$ (which implies $\f \pi 6\leq \t \leq \f \pi 3$), and
$$ \dist(\xi, x) \leq \f \pi 2 \f {\dist(y_x, z)}{\sqrt{\sin \t \sin \f \pi 4}}\leq \f \pi 2 \f {\da_d}{\sin\f \pi 6}=\pi \da_d\leq  \da_{d-1}.$$
This means  that $\xi\in B_{\sph}(x, \da_{d-1})$ and hence
$z=( \xi\sin\t, \cos\t)\in S\Bl( B_{\sph}(x, \da_{d-1})\Br)$.
 \eqref{6-5} then follows.

  Now applying  the induction hypothesis to the weight $w_{d-1}$ on $\sph$ and the integer $N_1$, we  conclude  that there exists  a regular geodesic simplex partition $\{E_1, \cdots, E_m \}$ of $\sph$  with respect to the integer  $N_1$ and the weight $w_{d-1}$ on $\sph$.
Setting  $S_j =S(E_j)$ for $1\leq j\leq m$, we obtain a partition $\{S_1, \cdots, S_m\}$ of the spherical cap  $B_{\SS^d}(e_{d+1}, \f \pi 2)$ satisfying that  for each $1\leq j\leq m$,
$$ N w_d(S_j) =N w_d(S(E_j))=N_1 w_{d-1}(E_j)\in\NN.$$
Thus, to complete the proof, it remains to show that each $S_j$ is an admissible  geodesic simplex on  $\SS^d$. Since each $E_j$ is an admissible geodesic simplex in $\sph$, there exists an admissible subset $\{ \xi_{j,1},\cdots, \xi_{j,d}\}$ of $\sph$ such that $E_j=\conv_{\sph} \{ \xi_{j,1},\cdots, \xi_{j,d}\}$ for each $1\leq j\leq m$.   Using  Lemma \ref{prop-2-2} (i), it is easily seen that
$$ S_j=S(E_j)=\conv_{\SS^d} \{  \wh{\xi_{j,1}},\cdots, \wh{\xi_{j,d}}, e_{d+1}\},$$
where $\wh{\xi_{j,i}}=(\xi_{j,i},0)\in\SS^d$ for $i=1,\cdots, d$.
Since  $\{  \wh{\xi_{j,1}},\cdots, \wh{\xi_{j,d}}, e_{d+1}\}$ is an admissible subset of $\SS^d$, we conclude that each $S_j$ is an admissible geodesic simplex of $\SS^d$. This completes the proof.

\begin{rem}\label{rem-2-7} If $w$ is a $\tau_d$-invariant weight on $\sph$ with $w(\sph)=1$, then the condition (i) of  Assertion (A) always holds.
\end{rem}
\begin{rem}\label{rem-2-8}  If  $w$ is a normalized  $\ZZ_2^d$-invariant weight on $\sph$, and $S=\conv_{\sph}\{ e_1,\cdots, e_d\}$, then $S_\tau:=\{ x\tau:\  \ x\in S\}$ is an admissible geodesic simplex with $w(S_\tau)=\f 1{2^d}$ for each $\tau\in\ZZ_2^d$.   Moreover, $\{ S_\tau:\  \ \tau\in\ZZ_2^d\}$ is a partition of $\sph$. Thus, Proposition ~\ref{prop-1-9-appendix} holds trivially if $w$ is $\ZZ_2^d$-invariant, $w(\sph)=1$ and $\f N {2^d}\in \NN$.
\end{rem}

   \subsection{Proof of Proposition \ref{prop-1-10-0}}
   Let $e_1=(1,0,\cdots, 0), \cdots, e_d=(0, \cdots, 0, 1)$ denote the standard canonical basis in $\RR^d$.
    Without loss of generality, we may assume that $T=\conv_{\RR^d}\{ \xi_1, e_2, \cdots, e_d\}$ with
    $\xi_1=(\sin\t, \cos\t, 0,\cdots, 0)$ for some $\f \pi 6\leq \t\leq \f \pi2$,
in which case $T$ can be written explicitly as
\begin{align*}
   T=&\Bl\{ x\in [0,\infty)^d:\  \  x_1 \tan \f \t2+\sum_{j=2}^{d} x_j =1 \ \ \text{and}\  \  x_{2} \tan \t \ge x_1\Br\}.
\end{align*}
For the rest, we  fix $\t\in [\f \pi 6, \f \pi 2]$, and  denote by $T_d$ the simplex in $\RR^d$ given by
$$ T_{d}:=\Bl\{ x\in [0, \infty)^d:\  \  x_1 \tan \f \t2+\sum_{j=2}^{d} x_j \leq 1 \ \ \text{and}\  \  x_{2} \tan \t \ge x_1\Br\}.$$
Also,  we will use the notation $B(x,r)_{\RR^d}$ to denote the Euclidean ball $\{y\in\RR^d:\  \ \|y-x\|\leq r\}$ in $\RR^d$.

 For the proof of Proposition \ref{prop-1-10-0}, we  claim that
 it is sufficient to show  the following assertion: \\

{\bf (B)} {\it If  $d\ge 2$, $N\in\NN$, $r\in (0,1)$ and   $w$  is    a weight  on   $T_d $ such  that   $Nw(T_d)\in \NN$, and    $\inf\bl\{ w(B): B=B (x, r)_{\RR^d}\subset T_d\br\}  \ge \f 1N$,    then $T_d$ has    a convex partition $\{ R_1, \cdots, R_{n_0}\}$  which satisfies that  $Nw(R_j)\in \NN$ and $B(x_j, c_d'r)_{\RR^d}\subset R_j \subset B(x_j, c_d r)_{\RR^d}$ for each  $1\leq j\leq n_0$ and some constants $c_d',c_d>0$.}\\

To show the claim, for $d\ge 3$, we
let $\Psi: T_{d-1}\to T$ denote  the   mapping given by $x \mapsto(x, \vi(x))$ with $\vi(x)=1-x_1\tan\f \t2-\sum_{j=2}^{d-1} x_j$ for $x\in T_{d-1}$.  Clearly,     $\Psi: T_{d-1}\to T$  is a bijective mapping that     maps convex sets to convex sets and satisfies $\|x-y\|\leq  \|\Psi(x)-\Psi(y)\|\leq C_d\|x-y\|$ for any $x,y\in T_{d-1}$. Moreover,  for each nonnegative function  $f:T\to [0, \infty)$, we have
\begin{equation*}
    \int_T f(x)\, dx =a_\t \int_{T_{d-1}} f\bl( \Psi(x)\br)\, dx,
\end{equation*}
where $a_\t:=\sqrt{d-1+\tan^2 \f \t 2}$. Thus, setting $w_{d-1}(x)=a_\t w(\Psi(x))$ for $x\in T_{d-1}$, we have that $w_{d-1}(E)=w(\Psi(E))$ for each $E\subset T_{d-1}$. In particular, this implies that $N w_{d-1}(T_{d-1})=Nw(T)\in\NN$
and $w_{d-1}(B) =w(\Psi(B))\ge \f 1N$ whenever $B=B(x,  r_N)_{\RR^{d-1}}\subset T_{d-1}$.
 Thus, applying Assertion~(B) to the integer $N$ and the weight $w_{d-1}$ on $T_{d-1}$, we get a partition $\{R_1,\cdots, R_{n_0}\}$ of $T_{d-1}$. It follows that  $\{E_j =\Psi(R_j):\   \   j=1,\cdots, n_0\}$ is a convex partition of the surface simplex $T$ with the stated properties in  Proposition \ref{prop-1-10-0}.
 This shows the claim.

  It remains to prove Assertion~ (B). We shall  use induction on the dimension  $d$.
  Let $L_d\ge 10$ be a large constant that  will be specified later.
  Let $r_1=L_d r$. Without loss of generality,
we may  assume that  $0<r\leq \f 1{100 L_d}$ since otherwise Assertion (B) holds trivially with $n_0=1$.

  We start with the case of $d=2$. Note that $T_2$ is an isosceles triangle  with vertices at $(0,0)$, $(0,1)$ and $(\sin\t, \cos\t)$ in the $x_1x_2$-plane. By rotation invariance of Assertion (B),
we may assume, without loss of generality,  that  $T_2$ is the triangle  with vertices at  $A=(-\sin \f \t2, \cos\f \t 2)$, $B=(\sin \f \t2, \cos\f \t 2)$ and $O=(0,0)$  in the $x_1x_2$-plane.
 For
$0\leq t<s\leq 1$, we set
$\tr(t,s):=\{  (ux,uy):\  \ (x,y)\in T_2,\ \ t\leq u\leq s\}$.
 It is easily seen that  for any  $0\leq s\leq 1-3r_1$,  the set $\tr(s+r_1, s+3r_1)$ contains a ball of radius $ r$, and hence $w(\tr(s+r_1, s+3r_1))\ge \f 1N$. By continuity, this implies that   for each  $s\in [0, 1-3r_1]$, there exists $t\in [s+r_1, s+3r_1]$ such that $N w(\tr (s, t))\in\NN$. Using this  fact iteratively,  we can construct a  partition  $0=t_0<t_1<\cdots< t_k=1$ of $[0,1]$ such  that
 $ t_{j-1}+r_1 \leq t_j \leq t_{j-1}+3r_1$   and  $N w(\tr (t_{j-1}, t_{j}))\in\NN$ for $1\leq j\leq k-1$,
  where $k$ is the largest   positive integer so that   $3r_1\leq 1-t_{k-1}\leq 6r_1$. Since $N w(T_2)\in\NN$, we also have that $N w(\tr (t_{k-1}, 1))\in\NN$.
  Note that for $0\leq j\leq k-1$,
  \begin{equation}\label{2-6:0}
    jr_1\leq t_j \leq 3r_1j\   \  \text{ and}\   \    jr_1  \leq t_j \|B-A\|\leq 3 jr_1.
  \end{equation}

  For simplicity, we set $\tr_j:=\tr(t_{j-1}, t_j)$ for $1\leq j\leq k$.
We  construct a  convex   partition of the domain   $\tr_j$ for each $10\leq j\leq k$  as follows.
   Let $A_j(s)= t_j A + t_j (B-A)s$,  $s\in [0,1]$ denote the parametric representation of the line segment from $t_jA$ to $t_j B$.
  For  $0\leq s<t\leq 1$, we denote by $T_{j}(s, t)$ the
     trapezoid with vertices at  $ A_{j-1}( s )$, $ A_{j-1}(t)$, $ A_j (s)$ and $ A_j (t)$.  It follows from \eqref{2-6:0} that for $0\leq s \leq 1-j^{-1}$
     $$ \text{area} \Bl(T_j(s, s+j^{-1})\Br) =\f 12(\cos\f \t2) (t_j-t_{j-1})j^{-1}\|A-B\|( t_j+t_{j-1})\sim r_1^2$$
     and $\diam(T_j(s, s+j^{-1}))\sim r_1$ .
   By Lemma \ref{lem-2-6}, this implies  that $T_j(s, s+j^{-1})$ contains a ball of radius $ c_2L_2r$. Thus, taking $L_2>c_2^{-1}$, we  conclude that  $w\Bl(T_j (s, s+j^{-1})\Br)\ge \f 1N$ for all $s\in [0, 1-j^{-1}]$. By continuity, this implies that for any $s\in [0, 1-3j^{-1}]$, there exists $t\in [s+j^{-1}, s+3j^{-1}]$ such that $N w(T_j (s, t))\in \NN$. Applying this fact iteratively, and recalling  that $Nw(\tr_j)\in\NN$,   we may construct   a partition $0=s_{j,0}<s_{j,1}<\cdots < s_{j, m_j}=1$ of $[0,1]$ such that
   $s_{j, i-1}+j^{-1}\leq s_{j,i} \leq s_{j,i}+3 j^{-1}$ for $1\leq i\leq m_j-1$, $j^{-1}\leq s_{j,m_j}-s_{j, m_j-1}\leq 6 j^{-1}$ and $N w\Bl(T_j (s_{j,i-1}, s_{j,i})\Br)\in \NN$ for $1\leq i\leq m_j$.
 Using Lemma \ref{lem-2-6}, we know that each set $T_j (s_{j,i-1}, s_{j,i})$ contains a ball of radius $r$ and has diameter $\sim r$.

 Now  putting the above together, we obtain a  convex partition of $T_2$:
$$\{ \tr_j:\  1\leq j\leq 9\} \cup\Bl\{ T_j \Bl(s_{j,i-1}, s_{j,i}\Br):\   \  10\leq j \leq k,\  \  1\leq i\leq m_j\Br\}.$$
Assertion (B) for $d=2$ then follows.

 Now assume that Assertion (B)  has been proven for the simplex $T_{d-1}\subset \RR^{d-1}$.  Let  $N$ and $w$ be a positive integer and   a weight on $T_d$ satisfying the conditions of Assertion (B).
   Note  that for any nonnegative function $f$ on $T_d$,
\begin{align}
    \int_{T_d}f(x)\, dx&=\int_0^1 \Bl[\int_{ T_{d-1}} f ( tx', 1-t)\, dx'\Br] t^{d-1}\, dt.\label{2-7:0}\end{align}
For $0\leq t<1-r_1$, define
    $$E_t:=\Bl\{ ( sx', 1-s):  x'\in T_{d-1},\   \   t\leq s\leq t+r_1\Br \}\subset T_d.$$
    It is easily seen that $E_t$ is a convex set in $\RR^d$  with diameter $\sim t+r_1$. Furthermore, by \eqref{2-7:0}, we have  $ \text{Vol}_d(E_t)\sim  (t+r_1)^{d-1} r_1$.  Thus,  according to Lemma \ref{lem-2-6},  for each $0\leq t<1-r_1$,  the set $E_t$  must contain a ball of radius at least $ c_d r_1=c_d L_d r$.  Taking the constant $L_d$ sufficiently large so that $c_d L_d>1$, we obtain from the assumption of Assertion (B) that that $w(E_t)\ge \f 1N$ for all $0\leq t\leq 1-r_1$.
   This together with the fact that $Nw(T_d)\in\NN$ implies that there is  a partition $0=t_0<t_1<\cdots <t_m=1$ of $[0,1]$ such that    $t_{j-1}+r_1\leq t_j \leq t_{j-1}+3 r_1$ for $1\leq j\leq m-1$, $ t_{m-1}+r_1\leq t_m\leq t_{m-1}+3r_1$ and
   $N w (A_j)\in \NN$ for $1\leq j\leq m$, where
   $A_j:=\Bl\{ ( sx', 1-s):  x'\in T_{d-1},\   \   t_{j-1}\leq s\leq t_j\Br \}$.
Next, for each $1\leq j\leq m$, we  define a weight function $w_j$ on $T_{d-1}$ by
   $$w_j(x'):=\int_{t_{j-1}}^{t_j}  w ( tx', 1-t)\,  t^{d-1}\, dt, \  \ x'\in T_{d-1}.$$
   We claim that for each  ball $B\subset T_{d-1}$ with radius $r_1$, $w_j(B)\ge \f 1N$. Indeed, setting
   $$\wt{B}_j:=\{ (sx', 1-s): x'\in B,\   \ t_{j-1}\leq s\leq t_j\},$$
   and using \eqref{2-7:0},
   we see that $\wt{B}_j$ is
  a convex subset of $T_d$ with diameter $\sim r_1$ and volume  $\sim   r_1^d$, By Lemma \ref{lem-2-6}, this implies that $\wt{B}_j$ contains a ball of radius $c_d r_1\ge r$. Using \eqref{2-7:0}, we obtain
$ w_j(B)=w(\wt{B}_j) \ge \f 1N.$
This shows the claim.

Now applying the induction hypothesis to the integer $N$, the radius $r_1$ and the  weights $w_j$ on $T_{d-1}$,  we conclude that for each $1\leq j\leq m$, the simplex   $T_{d-1}$ has a convex partition $R_{j,1},\cdots, R_{j,k_j}$ such that  $B(x_{j,i}, c_dr_1)_{\RR^{d-1}}\subset R_{j,i}\subset B(x_{j,i}, c_d'r_1)_{\RR^{d-1}}$ and $N w_j (R_{j,i})\in \NN$.
Setting
$$ T_{j,k}=\{ (sx', 1-s): x'\in R_{j,k},\   \ t_{j-1}\leq s\leq t_j\},$$
we get a convex partition $A_j =\bigcup_{i=1}^{k_j} T_{j,i}$ of the set $A_j$
satisfying that  $Nw(T_{j,k})=Nw_1(R_{j,k})\in\NN$,  $\diam(T_{j,k})\sim r_1$ and $\text{Vol}_d (T_{j,k})\sim r_1^d$.
This shows that $\{T_{j,i}:\  \ 1\leq j \leq m,\   \  1\leq i\leq k_j\}$ is a convex partition of $T_d$ with the stated properties in Assertion (B).

\subsection{Concluding remarks}

Combining Remarks \ref{rem-2-7} and \ref{rem-2-8} with Proposition \ref{prop-1-10-0}, we obtain
\begin{cor}
Let  $w$ be a weight on $\sph$ that is either   $\tau_d$-invariant or $\ZZ_2^d$-invariant and which satisfies  the conditions of Theorem \ref{thm-partition} for some  $r\in (0,\pi)$ and positive  even integer $N$. Assume in addition that $2^{-d} N$ is an integer if $w$ is $\ZZ_2^d$-invariant.
Let $S$ denote  either  the  semisphere $\{x\in\sph:\  \ x_d\ge 0\}$ or the geodesic simplex $\{x\in\sph:\  \ x_1,\cdots, x_d\ge 0\}$  according to whether  $w$ is $\tau_d$-invariant or $\ZZ_2^d$-invariant.
Then   $S$ has  a convex partition
 $\{R_1, \cdots, R_M\}$   for which the sets $R_j\subset S$ satisfy Conditions (i) and (ii) of Theorem \ref{thm-partition}.
\end{cor}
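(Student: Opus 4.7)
The plan is to run the two-step scheme of the proof of Theorem~\ref{thm-partition} but restricted to the symmetric subdomain $S$, handling the two invariance classes separately.

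\textbf{$\tau_d$-invariant case, $S = B(e_d,\pi/2)$.} By Remark~\ref{rem-2-7} we have $w(S)=\tfrac12$, so condition~(i) of Assertion~(A) in the proof of Proposition~\ref{prop-1-9-appendix} is automatic. The condition $N/2^{d-2}\in\NN$, which is the standing assumption of Section~2, together with the ball-mass hypothesis of Theorem~\ref{thm-partition} supply conditions~(ii) and~(iii) of Assertion~(A). Inspecting its inductive proof, every admissible geodesic simplex that it constructs lies inside $B(e_d,\pi/2)$, so its output is already a regular geodesic simplex partition $\{S_1,\ldots,S_m\}$ of $S$ with $Nw(S_j)\in\NN$. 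I then copy the final step of the proof of Theorem~\ref{thm-partition}: for each $j$, transfer $w$ to the flat simplex $T_j := g^{-1}(S_j)$ via $w_j(x) := w(g(x))\, a_j/\|x\|^d$ with $a_j := \min_{x\in T_j}\|x\|$, verify using Lemma~\ref{prop-2-2}(iii)--(iv) that $w_j$ satisfies the ball-mass hypothesis of Proposition~\ref{prop-1-10-0} with some $r_N \sim_d r$, apply Proposition~\ref{prop-1-10-0}, and pull the resulting convex partition back through $g$, which by Lemma~\ref{prop-2-2}(ii) preserves geodesic convexity. Taking the union over $j$ yields the desired partition of $S$.

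\textbf{$\ZZ_2^d$-invariant case, $S = \conv_{\sph}\{e_1,\dots,e_d\}$.} Here Remark~\ref{rem-2-8} tells us that $S$ is itself an admissible geodesic simplex with $w(S)=2^{-d}$, so the hypothesis $2^{-d}N\in\NN$ gives $Nw(S)\in\NN$ at once and no decomposition into admissible simplices is required. I then apply Proposition~\ref{prop-1-10-0} directly to the transferred weight $w_1(x) = w(g(x))\, a_1/\|x\|^d$ on the surface simplex $T := g^{-1}(S)$ in $\RR^d$ and pull back via $g$.

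\textbf{Main obstacle.} The only point that requires any care beyond direct citation is verifying that Assertion~(A) legitimately applies to $(S,N,w)$ in the $\tau_d$-invariant case, since Assertion~(A) is phrased for the distinguished hemisphere of $\sph$. However, its hypothesis~(i) is precisely the equality $w(B(e_d,\pi/2))=\tfrac12$ now supplied by Remark~\ref{rem-2-7}, and hypotheses~(ii)--(iii) transfer unchanged from Theorem~\ref{thm-partition}; the inductive construction inside the proof of Assertion~(A) never exits $B(e_d,\pi/2)$, so no fresh argument is needed. All remaining bookkeeping (the distortion estimates for $g$ on each $T_j$, the choice of $r_N$, the matching of ball radii $c_d r$ and $c_d' r$) is identical to that appearing in the proof of Theorem~\ref{thm-partition} above.
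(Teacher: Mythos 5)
Your proposal is correct and follows exactly the route the paper intends: the paper's entire proof is the one-line remark that the corollary follows by combining Remark~\ref{rem-2-7} (which supplies condition (i) of Assertion (A) in the $\tau_d$-invariant case), Remark~\ref{rem-2-8} (which makes $S=\conv_{\sph}\{e_1,\dots,e_d\}$ a single admissible simplex with $Nw(S)\in\NN$ in the $\ZZ_2^d$-invariant case), and Proposition~\ref{prop-1-10-0} applied after transferring the weight to the flat simplices via $g$. Your reading of the divisibility hypothesis as the standing assumption $N/2^{d-2}\in\NN$ of Section~2 (and of the ball-mass condition as that of Proposition~\ref{prop-1-9-appendix}) matches the paper's implicit reading, so nothing further is needed.
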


\section{Proof of Theorem \ref{thm-partition}:  the general case of  $N\in\NN$}

  Our goal in this section is to prove  Theorem \ref{thm-partition} without the extra condition $\f N {2^{d-2}}\in\NN$.  Assume that    $w$ is  a weight on $\sph$ normalized by $w(\sph)=1$ and  satisfying that
$w(B)>0$ for every spherical cap $B\subset \sph$. Let $r\in (0,1)$ and a positive integer $N$ satisfying that $\inf_{x\in\sph} w(B(x, r))\ge \f 1N$. We will keep these assumptions throughout this section.  For convenience, we introduce the following concept.
\begin{defn}\label{april1} Given a parameter  $\va\in (0,1)$, we say a subset $A=\{\eta_1, \cdots, \eta_m\}$ of $m$ distinct  points  ($m\leq d$) on  $\sph$ is strongly $\va$-separated if for each  $j=1,\cdots, m$,
$$\rho(\eta_j, X_j):=\inf_{x\in X_j}\|\eta_j-x\|\ge \va\   \   \
 \text{with}\  \   X_j=\text{span}\bl( A\setminus\{\eta_j\}\br).$$
   A  geodesic simplex in $\sph$ is said to be in the class $\mathcal{S}_\va$
        if it is spanned  by  a set of $d$ strongly $\va$-separated points $\eta_1,\cdots, \eta_d$ in $\sph$.
\end{defn}

 Theorem \ref{thm-partition} is a direct consequence of the following two propositions.

\begin{prop}\label{prop-1-10-appendix} Let $\va\in (0,1)$ be a given parameter and
$S$  a geodesic simplex  from the class $\mathcal{S}_\va$.    If $N w(S)\in\NN$,
    then there exists   a  convex partition $\{R_1,\cdots, R_n\}$ of   $S$   such that  $Nw(R_j)\in \NN$ and $B_{\sph}(x_j, c_{d,\va}r)\subset R_j \subset B_{\sph}(x_j, c_{d,\va}' r)$ for  each $1\leq j\leq n$.
  \end{prop}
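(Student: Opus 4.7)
The plan is to reduce Proposition~\ref{prop-1-10-appendix} to the admissible case Proposition~\ref{prop-1-10-0} by means of a nonlinear sphere dilation that carries a fixed admissible geodesic simplex onto an arbitrary element of $\mathcal{S}_\va$. Fix an admissible subset $\{\xi_1,\dots,\xi_d\}\subset\sph$ once and for all, and set $S_0:=\conv_{\sph}\{\xi_1,\dots,\xi_d\}$ with flat counterpart $T_0:=\conv_{\RR^d}\{\xi_1,\dots,\xi_d\}$. Given an arbitrary $S=\conv_{\sph}\{\eta_1,\dots,\eta_d\}\in\mathcal{S}_\va$, let $L:\RR^d\to\RR^d$ be the linear map determined by $L\xi_j=\eta_j$ and define
\[
\Phi_S:S_0\to S,\qquad \Phi_S(x)=\frac{L(x)}{\|L(x)\|}=g\bl(L(g^{-1}(x))\br),
\]
where $g(y)=y/\|y\|$ is the radial projection of Lemma~\ref{prop-2-2}. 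This is the concrete realization of the nonlinear dilation alluded to in the section introduction.

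Three properties of $\Phi_S$ drive the argument. First, $\Phi_S$ is a homeomorphism taking geodesically convex subsets of $S_0$ onto geodesically convex subsets of $S$: by Lemma~\ref{prop-2-2}(ii) both $g$ and $g^{-1}$ interchange ordinary convex and geodesically convex subsets between each flat simplex and its spherical image, while the linear map $L$ preserves ordinary convexity in $\RR^d$. Second, strong $\va$-separation of $\{\eta_j\}$ is exactly a lower bound on the smallest singular value of $L$; together with $\|L\xi_j\|=1$ and the compactness of $T_0$, this gives positive uniform bounds above and below for $\|L(x)\|$ on $T_0$, whence $\Phi_S$ and $\Phi_S^{-1}$ are smooth and bi--Lipschitz with constants depending only on $d$ and $\va$. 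Third, define the pushforward weight $w_0$ on $S_0$ by $w_0(E):=w(\Phi_S(E))$. Then $Nw_0(S_0)=Nw(S)\in\NN$, and the bi--Lipschitz bounds above convert the hypothesis $w(B_{\sph}(y,r))\ge 1/N$ for every cap $B_{\sph}(y,r)\subset S$ into $w_0(B_{\sph}(x,\wt r))\ge 1/N$ for every $B_{\sph}(x,\wt r)\subset S_0$, as soon as $\wt r\ge C_{d,\va} r$.

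Now invoke Proposition~\ref{prop-1-10-0} for the pair $(S_0,w_0)$ at radius $\wt r=C_{d,\va}r$. Although the statement of that proposition is phrased on the flat simplex $T_0$, the same transfer via Lemma~\ref{prop-2-2} already used in the proof of Theorem~\ref{thm-partition} produces a convex partition $\{R^0_1,\dots,R^0_n\}$ of $S_0$ with $Nw_0(R^0_j)\in\NN$ and $B_{\sph}(y_j,c'\wt r)\subset R^0_j\subset B_{\sph}(y_j,c\wt r)$ for each $j$. Setting $R_j:=\Phi_S(R^0_j)$ yields a partition of $S$ into geodesically convex pieces; the identity $w(R_j)=w_0(R^0_j)$ preserves integrality, and the bi--Lipschitz estimates for $\Phi_S$ promote the ball inclusions for $R^0_j$ into the required inclusions
\[
B_{\sph}\bl(\Phi_S(y_j),\,c_{d,\va} r\br)\subset R_j\subset B_{\sph}\bl(\Phi_S(y_j),\,c'_{d,\va} r\br),
\]
completing the proof.

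The principal technical obstacle is uniform quantitative control of $\Phi_S$ solely in terms of $d$ and $\va$. Strong $\va$-separation immediately bounds the singular values of $L$, but the Lipschitz constant of $x\mapsto L(x)/\|L(x)\|$ also demands a uniform lower bound on $\|L(x)\|$ over $T_0$; this is the place where the compactness-plus-full-rank argument enters and where the $\va$-dependence of the final constants is most sensitive. A secondary point is checking that the ball inclusions produced by Proposition~\ref{prop-1-10-0}, stated in the flat hyperplane metric on $T_0$, translate into spherical caps with comparable radii: on the admissible side this is Lemma~\ref{prop-2-2}(iv), and on the target side it is absorbed into the bi--Lipschitz estimates for $\Phi_S$.
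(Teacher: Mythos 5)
Your proposal is correct and follows essentially the same route as the paper: the paper's proof also reduces to Proposition \ref{prop-1-10-0} via the map $\Phi(x)=g(Ax)$ (with $A$ the matrix whose columns are the spanning vectors $\eta_1,\dots,\eta_d$, acting on the standard flat simplex $\conv_{\RR^d}\{e_1,\dots,e_d\}$), using the strong $\va$-separation to prove $\tfrac{\va}{\sqrt d}\|x\|\le\|Ax\|\le\sqrt d\|x\|$ and $\min_{x\in T}\|x\|\ge \va/d$, which yield exactly the bi-Lipschitz and weight-transfer estimates you describe. The only cosmetic difference is that the paper works directly from the flat simplex to $S$ rather than inserting the intermediate spherical simplex $S_0$.
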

  \begin{prop}\label{prop-1-13-appendix} There exists a  partition $\{S_1,\cdots, S_m\}$ of $\sph$ such that     each $S_j$ is a geodesic simplex from the class $\mathcal{S}_{\va_d}$ satisfying that $Nw(S_j)\in \NN$, where $\va_d$ is  a positive parameter   depending only on $d$.
\end{prop}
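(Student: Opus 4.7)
The plan is to prove Proposition~\ref{prop-1-13-appendix} in two stages: first, produce an initial coarse partition of $\sph$ into strongly separated geodesic simplexes without yet enforcing the integrality of $Nw(S_j)$; second, adjust the boundaries between adjacent simplexes via the nonlinear dilations $T_\alpha$ announced at the end of the Introduction, combined with a cascading intermediate-value-theorem argument.

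First I would start from the canonical $\ZZ_2^d$-orbit partition $\{\tilde S_\tau\}_{\tau\in\ZZ_2^d}$ of $\sph$ generated by the positive-octant simplex $\conv_{\sph}\{e_1,\dots,e_d\}$ (as in Remark~\ref{rem-2-8}). Since the vertices of each $\tilde S_\tau$ form an orthonormal basis of $\RR^d$, every $\tilde S_\tau$ belongs to $\mathcal{S}_{\va_0}$ for an absolute constant $\va_0>0$. Two such simplexes share a codimension-one face exactly when their labels $\tau,\tau'\in\ZZ_2^d$ differ in a single coordinate, so the adjacency graph is the $d$-dimensional hypercube and admits a Hamiltonian path (a Gray-code enumeration). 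Label the simplexes $\tilde S_1,\tilde S_2,\dots,\tilde S_{M_0}$ along such a path, where $M_0=2^d$, and denote by $F_j$ the common codimension-one face of $\tilde S_j$ and $\tilde S_{j+1}$.

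Next I would deform this initial partition into the desired partition $\{S_j\}_{j=1}^{M_0}$ through a cascading sequence of nonlinear dilations. For each $j=1,\dots,M_0-1$, a nonlinear dilation $T_{\alpha_j}$ localized near the face $F_j$ leaves the simplexes $S_1,\dots,S_{j-1}$ (already fixed) pointwise unchanged while continuously pushing $F_j$ inside $\tilde S_j\cup\tilde S_{j+1}$. The key property required of $T_\alpha$ is that it maps geodesic simplexes to geodesic simplexes, so that each deformed piece is still a geodesic simplex, and that $w(S_j)$ depends continuously and monotonically on $\alpha$ in a neighborhood of $0$. By the intermediate value theorem one selects $\alpha_j$ so that $Nw(S_j)\in\NN$. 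After performing all $M_0-1$ adjustments, the identity $\sum_{j=1}^{M_0}Nw(S_j)=Nw(\sph)=N\in\NN$ forces $Nw(S_{M_0})\in\NN$ automatically.

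The main obstacle I anticipate is constructing the nonlinear dilations $T_\alpha$ so that they genuinely preserve the geodesic-simplex structure and so that after $M_0-1=2^d-1$ successive applications the final simplexes still lie in $\mathcal{S}_{\va_d}$ for a dimensional constant $\va_d>0$. The amount of weight that must be transferred across each face $F_j$ is bounded by $1/N$, and the standing hypothesis $\inf_{x\in\sph}w(B(x,r))\ge 1/N$ should translate this into a uniform geodesic displacement bound on the vertices, so that the compounded effect on any individual vertex stays controlled in terms of $d$ alone. Making this bookkeeping precise, and producing an explicit family $T_\alpha$ that preserves the class of geodesic simplexes (not merely their homeomorphism type), will be the technical heart of Section~3; this is also the reason the conclusion of Proposition~\ref{prop-1-13-appendix} is formulated in the weaker class $\mathcal{S}_{\va_d}$ rather than the class of admissible simplexes used in Section~2.
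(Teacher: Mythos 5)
There is a genuine gap, and it lies precisely in the step you defer to ``the technical heart'': the cascading adjustment along a Hamiltonian path cannot be carried out in the form you describe. Take $d=3$. For the first adjustment, between $\tilde S_1=\conv_{\sph}\{e_1,e_2,e_3\}$ and $\tilde S_2=\conv_{\sph}\{-e_1,e_2,e_3\}$, the requirement that both pieces remain geodesic simplexes while their union (the lune $\{x_2,x_3\ge0\}$) is preserved forces the new common face to be $\Arc(\eta_2,\eta_3)$ with $\eta_2,\eta_3$ sliding along the great circles through $\pm e_1$ and $e_2$, resp.\ $e_3$. After this step the second piece is $\conv_{\sph}\{-e_1,\eta_2,\eta_3\}$, and its union with the next octant $\tilde S_3=\conv_{\sph}\{-e_1,-e_2,e_3\}$ is a geodesic pentagon with vertices $-e_1,\eta_2,\eta_3,e_3,-e_2$ rather than a lune. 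A pentagon cannot be written as two geodesic triangles glued along one face (two triangles sharing an edge have only four distinct vertices), so no one-parameter family of the kind your intermediate-value step needs exists at the second stage; the analogous counting obstruction occurs in every dimension. Attempting to rescue the scheme by moving only vertices that do not touch already-finalized faces conflicts with the union-preservation constraint, which pins every moved vertex to a great circle through the two ``poles'' of the current adjacent pair --- and those poles change from step to step. So the existence of your localized dilations with all the stated properties is not bookkeeping to be filled in later; the architecture itself fails after one step.

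For contrast, the paper never re-adjusts a boundary between already-constructed pieces. It first slices $\sph$ into lunes $S(\al_{j-1},\al_j)$ of angular width in $[\pi/6,\pi/2)$ with $Nw(S(\al_{j-1},\al_j))\in\NN$ by a one-parameter intermediate-value argument (each candidate increment contains a spherical cap of a fixed radius, hence has weight $\ge 1/N$), the last lune being integral automatically --- this is the only place your ``last piece is free'' observation is used, and it is safe there because the lunes form a nested one-parameter family. The dilation $T_\al$ is then a \emph{global} bijection from the standard quarter-lune $S(0,\pi/2)$ onto $S(0,\al)$, used to pull the weight back to the standard lune, which is partitioned by a dimension induction (integrating out two variables to obtain a weight on $\SS^{d-3}$ and invoking the proposition there with $k_0=Nw(S(0,\pi/2))$ in place of $N$). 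Your correct instincts --- the intermediate value theorem, the free last piece, and the fact that $T_\al$ is the reason admissibility degrades to strong $\va_d$-separation --- all appear in the paper, but inside this non-cascading, induction-on-dimension structure.
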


 Proposition \ref{prop-1-10-appendix} is  a  direct  consequence of Proposition \ref{prop-1-10-0}. For completeness, we write its proof below.
\begin{proof}[Proof of Proposition \ref{prop-1-10-appendix}]
Firstly, assume that $S$ is spanned  by  a set  of strongly $\va$-separated points $\eta_1,\cdots, \eta_d\in \sph$ for some $\va\in (0,1)$, and let $T=\conv_{\RR^d}\{\eta_1,\cdots, \eta_d\}$.  We claim that  $a:=\min_{x\in T}\|x\|\ge \va /d$. To see this, let  $x=\sum_{j=1}^d t_j \eta_j\in T$ be such that $a=\|x\|$, where $t_1, \cdots, t_d\ge 0$ and $\sum_{j=1}^d t_j=1$. Without loss of generality, we may assume that $t_1=\max_{1\leq j\leq d}t_j$. Then $t_1\ge \f 1d$, and
\begin{align*}
    a&=\|x\|=t_1\|\eta_1+\sum_{j=2}^d t_1^{-1}t_j \eta_j\|\ge \f 1d \rho(\eta_1, X_1)\ge \f \va d.
\end{align*}

Secondly,
let $A:=[\eta_1  \   \eta_2\   \cdots\  \eta_d]$ denote  the   nonsingular $d\times d$ real matrix   with column vectors $\eta_1,\cdots, \eta_d$.  We  assert  that
\begin{equation}\label{1-7-decomposition}
 \f {\va}{\sqrt{d}}\|x\| \leq   \|Ax\|\leq \sqrt{d}\|x\|,\  \ \forall x\in\RR^d.
\end{equation}
Without loss of generality, we may assume that $x\in\sph$ and  $|x_1|=\max_{1\leq j\leq d}|x_j|$. On the  one hand, by the Cauchy-Schwarz inequality,
\begin{align*}
    \|A x\|=\|\sum_{j=1}^d x_j \eta_j\|\leq \sum_{j=1}^d |x_j|\|\eta_j\|\leq \sqrt{d}.
\end{align*}
On the other hand, however, since $|x_1|\geq \f{1}{\sqrt{d}}$, we have
\begin{align*}
    \|Ax\|=|x_1|\Bl\|\eta_1+\sum_{j=2}^d \f {x_2}{x_1}\eta_j\Br\|\ge |x_1|\rho (\eta_1, X_1)\ge \f \va{\sqrt{d}}.
\end{align*}
This proves the inequality \eqref{1-7-decomposition}.
%

 Finally, we prove the assertion of Proposition \ref{prop-1-10-appendix}. Consider  the bijective  mapping $\Phi: x\mapsto g(Ax)$ from the surface simplex $T_0=\text{conv}_{\RR^d}\{e_1,\cdots, e_d\}$ to the geodesic  simplex $S$.  According to Lemma \ref{prop-2-2} (ii),   $\Phi$   maps convex sets to geodesic convex sets, and by
  \eqref{1-7-decomposition} and Lemma \ref{prop-2-2} (iv),
 \begin{equation}\label{3-2-0}
  c_{1,d}\va^2\|x-y\|\leq   \dist(\Phi(x), \Phi(y))\leq C_{1,d} \va^{-1}\|x-y\|,\   \  \forall x, y\in T_0.
 \end{equation} Since $\Phi$ maps boundary of $T_0$ to boundary of $S$, \eqref{3-2-0} in particular implies that if $B(x, t)_{H_{T_0}}\subset T_0$ for some $x\in T_0$ and $t\in (0,1)$, then $$B_{\sph} (\Phi(x), c_{1,d}\va^2 t)\subset \Phi\Bl( B(x, t)_{H_{T_0}}\Br)\subset S. $$
  On the other hand,
using  Lemma \ref{prop-2-2} (iii),  we obtain by  a change of variable    that
 for each nonnegative function $f$ on $S$,
 \begin{equation}\label{3-2-00}
   |\text{det} A|  \int_{T_0} f(\Phi(x)) w(\Phi(x))\f a {\|A x\|^d}\, dx =\int_{S} f(z) w(z)\, d\s (z).
 \end{equation}
 Thus, setting
 $w_0(x)=a|\text{det} A|  w(\Phi(x)) / \|A x\|^d$ for $x\in T_0$, we have   $N w_0(T_0)=N w(S)\in\NN$ and
$$\inf\Bl\{ w_0(B):\  \  B=B(x, c r)_{H_{T_0}}\subset T_0\Br\}\ge \f 1N$$
with $c=1/c_{1,d}\va^2$.
     Thus, according to Proposition \ref{prop-1-10-0}, $T_0$ has a convex partition $T_0=\bigcup_{j=1}^m T_{0,j}$ with the properties that
$ B(x_j, c_{d} r)_{H_{T_0}}\subset T_{0,j}\subset B(x_j, c_{d}' r)_{H_{T_0}}$ and $N w_0(T_{0,j})\in\NN$ for all
$1\leq j\leq m$. Setting
$$ R_j=\Phi(T_{0,j}) =\{ g(A x):\  \ x\in T_{0,j}\}, \  \  j=1,2,\cdots, m,$$
and using \eqref{3-2-00} and Lemma \ref{prop-2-2}, we conclude that  $\{R_1, \cdots, R_m\}$ is a convex partition of the geodesic simplex  $S$ with the stated properties  in Proposition \ref{prop-1-10-appendix}.

\end{proof}

 Finally, we turn to the proof of Proposition ~\propref{prop-1-13-appendix}, which  is more involved.  The important ingredient used in the proof is a  family of nonlinear dilations on $\sph$, which we shall introduce and study in the subsection that follows.
\subsection{A family of nonlinear dilations}
Throughout this subsection, we write   $ \mathbf{x}(\vi, \xi)=(\cos \vi, \xi\sin\vi)\in \sph$
for $(\vi, \xi)\in [0,\pi]\times \SS^{d-2}$.
\begin{defn}\label{defn-3-4:2017}
For $0\leq \a<\b\leq 2\pi$,  define
\[S(\a,\b):=\Bl\{x=(r\cos \t, r\sin\t, \sqrt {1-r^2} \eta):\  \  \eta\in\SS^{d-3},\  \  0\leq r\leq 1,  \al\leq \t\leq \be\Br\},\]
where it is agreed that $\SS^0=\{ \pm 1\}$ when $d=3$.
\end{defn}

  Next,   for  $0<\al<\f\pi2$, we define the function $h_\al:  [-1,1]\to (0,1]$  by
$$ h_\al (t):= \begin{cases}\f 2\pi \tan^{-1} \Bl( \f{\tan \al}{|t|}\Br),\ \ &\text{if $0<|t|\leq 1$;}\\
 1,\   \  & \text{if $t=0$}.\end{cases}$$
It is easily seen that   $\f 2\pi \al \leq h_\al(t)\leq 1$ for $t\in [-1,1]$ and
\begin{equation}\label{derivative}
  h_\al'(t)=  -\f 2\pi \f{\tan \al}{t^2 +\tan^2 \al},\  \  t\in [-1,1].
\end{equation}
Using the function $h_\al$, we may define  a nonlinear  dilation $T_\al x$ of $x\in\sph$ with respect to the angle between $x$ and $e_1$ as follows:
  \begin{defn}\label{def-1-14:convex}
  Given  $0<\al<\f\pi2$,
 define the mapping   $T_\al:  \SS^{d-1}\setminus\{-e_1\}\to \SS^{d-1}\setminus\{-e_1\} $  by
$T_\al (x):=\mathbf{x}
\bl(h_\al(\xi_1) \vi, \xi\br)$  for $  x=\mathbf{x}(\vi,\xi)\in \SS^{d-1}\setminus\{-e_1\}$
 with $ \vi\in [0, \pi)$ and  $\xi=(\xi_1,\cdots, \xi_{d-1})\in\SS^{d-2} $.
\end{defn}

 The following lemma collects  some useful properties of $T_\al$:

\begin{lem}\label{lem-3-6:2017} Let  $0<\al<\f \pi2$ and let $\SS_0^{d-1}:=\{x\in\SS^{d-1}:\ \  x_1=0,\  x_2\ge 0\}$. Then the following statements hold:
\begin{enumerate}[\rm (i)]
\item  If  $x, y\in\sph$ and $0\leq \dist(x, e_1), \dist(y, e_1)\leq \f {3\pi} 4$, then
    $$ c_{d, \al} \dist(x,y)\leq \dist(T_\al x, T_\al y)\leq C_{d,\al}  \dist(x,y).$$

\item  $T_\al$ is a bijective mapping  from $S(0,\f \pi2)$ to $S(0,\al)$ that maps boundary of $S(0,\f \pi2)$ to the boundary of $S(0,\al)$.
  \item  For every nonnegative measurable function $f$  on $S(0,\al)$,
  \begin{align}
    \int_{S(0,\al)} f(x)\, d\s_d(x) =\int_{S(0, \pi/2)} f(T_\al x) \og(x)\, d\s_d(x),\label{1-3-appendix}
  \end{align}
  where
  \begin{equation}\label{1-12:convex}
     \og(x):=\Bl( \f {1-(T_\al (x)\cdot e_1)^2 }{1-x_1^2}\Br)^{\f{d-2}2} h_\al\Bl(\f{x_2}{\sqrt{1-x_1^2}}\Br),\   \  x\in S(0, \f \pi2).
  \end{equation}
  \item
  If  $S\subset S(0, \f \pi2)$  is  geodesic simplex  spanned by  $e_1$ and a set  of independent vectors  $\eta_1, \cdots, \eta_{d-1}\in\SS_0^{d-1}$, then  the set $T_\al(S)\subset S(0,\al)$ is a geodesic simplex  spanned by the set $\{T_\al(\eta_1), T_\al(\eta_2),\ldots, T_\al(\eta_{d-1}), e_1\}$.
  \item  If $\{\eta_1, \cdots, \eta_{d-1}\}$ is a set of  strongly $\va$-separated points in $\SS_0^{d-1}$ for some $\va\in (0,1)$, then $\{ T_\al (\eta_1),\cdots, T_\al (\eta_{d-1}), e_1\}$ is a set of  strongly $c_{d,\al} \va$-separated points on $\sph$.

\end{enumerate}
\end{lem}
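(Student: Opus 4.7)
My plan is to verify the five assertions essentially by direct computation in the longitudinal $(\vi,\xi)$-coordinates used to define $T_\al$. For (i), I would apply Lemma~\ref{lem-6-2} once to $\dist(x,y)$ in the parameters $(\vi_1,\xi)$ and $(\vi_2,\zeta)$, and once to $\dist(T_\al x,T_\al y)$ in the parameters $(h_\al(\xi_1)\vi_1,\xi)$ and $(h_\al(\zeta_1)\vi_2,\zeta)$, then compare $|h_\al(\xi_1)\vi_1-h_\al(\zeta_1)\vi_2|$ with $|\vi_1-\vi_2|$ and $\dist(\xi,\zeta)$ via the splitting
\begin{equation*}
h_\al(\xi_1)\vi_1-h_\al(\zeta_1)\vi_2 = h_\al(\xi_1)(\vi_1-\vi_2)+\vi_2\bl(h_\al(\xi_1)-h_\al(\zeta_1)\br).
\end{equation*}
The two-sided bounds $(2/\pi)\al\le h_\al\le 1$ and the derivative estimate \eqref{derivative}, together with the cap condition $\vi_1,\vi_2\le 3\pi/4$ (which keeps both $\sqrt{\sin\vi_1\sin\vi_2}$ and $\sqrt{\sin(h_\al(\xi_1)\vi_1)\sin(h_\al(\zeta_1)\vi_2)}$ bounded below), then yield the bilipschitz bounds. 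For (iii) I would use the polar volume form $d\s_d=\sin^{d-2}\vi\,d\vi\,d\s_{d-1}(\xi)$, substitute $\vi'=h_\al(\xi_1)\vi$ in the integral over $S(0,\al)$ (producing a Jacobian factor $h_\al(\xi_1)$), and then use $\sin^2\vi=1-x_1^2$, $\sin^2(h_\al(\xi_1)\vi)=1-(T_\al x\cdot e_1)^2$, and $\xi_1=x_2/\sqrt{1-x_1^2}$ to identify the resulting weight with $\og$ in \eqref{1-12:convex}.

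For (ii), the key observation is that in the $(\vi,\xi)$-coordinates one has
\begin{equation*}
S(0,\tf{\pi}{2}) = \{\mathbf{x}(\vi,\xi): \xi_1\ge 0,\ \vi\in[0,\tf{\pi}{2}]\},\qquad S(0,\al) = \{\mathbf{x}(\vi,\xi): \xi_1\ge 0,\ \vi\in [0,h_\al(\xi_1)\tf{\pi}{2}]\},
\end{equation*}
the latter because the polar angle of $(x_1,x_2)$ in the $x_1x_2$-plane equals $\arctan(\xi_1\tan\vi)$, which is at most $\al$ precisely when $\vi\le\arctan(\tan\al/\xi_1)=h_\al(\xi_1)\pi/2$. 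Consequently $T_\al$ is a bijection with explicit inverse $T_\al^{-1}(\mathbf{x}(\psi,\xi))=\mathbf{x}(\psi/h_\al(\xi_1),\xi)$, and it carries boundary face to boundary face: $\vi=0$ is the fixed vertex $e_1$, the face $\vi=\pi/2$ maps onto the slanted face $\vi=h_\al(\xi_1)\pi/2$, and the face $\xi_1=0$ is preserved.

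The main obstacle is (iv). Using $\eta_j\perp e_1$, I would first describe $S$ as $\{\mathbf{x}(\vi,\hat\xi): \vi\in [0,\pi/2],\ \hat\xi\in \conv_{\SS_0^{d-1}}\{\eta_1,\ldots,\eta_{d-1}\}\}$, so that applying $T_\al$ along each longitude produces
\begin{equation*}
T_\al(S) = \{\mathbf{x}(\psi,\hat\xi) : \hat\xi\in \conv_{\SS_0^{d-1}}\{\eta_j\},\ \psi\in[0,h_\al(\hat\xi_1)\pi/2]\}.
\end{equation*}
On the other hand, writing $T_\al(\eta_j) = \cos\mu_j\, e_1+\sin\mu_j\,\eta_j$ with $\mu_j = h_\al((\eta_j)_2)\pi/2\in[\al,\pi/2]$, a generic point $g(\tilde t_0 e_1+\sum_j\tilde t_jT_\al(\eta_j))$ of $\tilde S:=\conv_\sph\{e_1,T_\al(\eta_1),\ldots,T_\al(\eta_{d-1})\}$ has $e_1^\perp$-component proportional to $\sum_j\tilde t_j\sin\mu_j\,\eta_j$, and its longitudinal direction $\hat\xi$ therefore lies in $\conv_{\SS_0^{d-1}}\{\eta_j\}$. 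Fixing $\hat\xi=\sum_js_j\eta_j/S_0$ with $S_0=\|\sum_js_j\eta_j\|$, the slice of $\tilde S$ above this longitude is an arc from $e_1$ to the extremal point at $\tilde t_0=0$, whose angle from $e_1$ equals $\arctan(1/\Lambda)$ with $\Lambda=\sum_j(s_j/S_0)\cot\mu_j$. The identity $\cot\mu_j=(\eta_j)_2/\tan\al$ (immediate from $\mu_j=\arctan(\tan\al/(\eta_j)_2)$) gives $\Lambda=\hat\xi_1/\tan\al$, and hence the extremal angle is exactly $h_\al(\hat\xi_1)\pi/2$. The longitudinal arcs of $T_\al(S)$ and $\tilde S$ agree for every $\hat\xi$, so $T_\al(S)=\tilde S$.

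For (v), again write $T_\al(\eta_j)=\cos\mu_j e_1+\sin\mu_j\eta_j$ and set $W_j=\sspan\{\eta_i:i\ne j\}\subset e_1^\perp$. The identity $\sspan\bl(\{e_1\}\cup\{T_\al(\eta_i):i\ne j\}\br) = \sspan\{e_1\}\oplus W_j$ combined with $\eta_j\perp e_1$ yields directly
\begin{equation*}
\rho\bl(T_\al(\eta_j),\,\sspan(\{e_1\}\cup\{T_\al(\eta_i):i\ne j\})\br)=\sin\mu_j\cdot\rho(\eta_j,W_j)\ge(\sin\al)\va.
\end{equation*}
For $e_1$ versus $\sspan\{T_\al(\eta_j):1\le j\le d-1\}$, a Lagrange-multiplier computation using $\|Ax\|\ge(\va/\sqrt{d-1})\|x\|$—obtained from \eqref{1-7-decomposition} applied to $A=[\eta_1\,\cdots\,\eta_{d-1}]$ acting on $e_1^\perp$—together with the bound $\cot\mu_j\le\cot\al$, produces $\rho(e_1,\sspan\{T_\al(\eta_j)\})\ge c_{d,\al}\va$, completing the proof of strong $c_{d,\al}\va$-separation.
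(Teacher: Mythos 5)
Your proposal is correct, and for parts (i)--(iii) it follows the paper's proof essentially verbatim: Lemma \ref{lem-6-2} applied before and after the dilation together with the derivative bound \eqref{derivative} for (i), the characterization $\mathbf{x}(\vi,\xi)\in S(0,\al)\Leftrightarrow \tan\vi\leq \tan\al/\xi_1$ (the paper's ``Fact 1'') for (ii), and the substitution $\vi=h_\al(\xi_1)\t$ for (iii). One small correction in (i): the cap condition does \emph{not} bound $\sqrt{\sin\vi_1\sin\vi_2}$ from below (both angles may vanish); what it actually gives is $\sin\vi\sim\vi$ and $\sin(h_\al(\xi_1)\vi)\sim_\al \vi$, which is what makes the two $\sqrt{\sin\cdot\,\sin\cdot}$ factors comparable, and in your splitting the factor multiplying $h_\al(\xi_1)-h_\al(\zeta_1)$ should be the \emph{smaller} of $\vi_1,\vi_2$ so that it is dominated by $\sqrt{\vi_1\vi_2}$. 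For (iv) and (v) you take a genuinely different, though equivalent, route: the paper introduces the linear map $E_\al y=(y_2\cos\al,\bar{y}\sin\al)$, proves $T_\al x=E_\al(x)/\|E_\al(x)\|$ on $\SS_0^{d-1}$ (its ``Fact 2''), and deduces (iv) from the two identities $T_\al(\conv_{\sph}\{\eta_j\})=\conv_{\sph}\{T_\al\eta_j\}$ and $T_\al(\Arc(e_1,\eta))=\Arc(e_1,T_\al\eta)$, whereas you foliate both $T_\al(S)$ and $\conv_{\sph}\{e_1,T_\al(\eta_1),\ldots\}$ by longitudes and match the slices via $\cot\mu_j=(\eta_j)_2/\tan\al$, hence $\Lambda=\hat{\xi}_1/\tan\al$ and extremal angle $h_\al(\hat{\xi}_1)\pi/2$; I checked this identity and it is exactly right (it is Fact 2 rewritten in polar form), though one should note that pinning down the slice over a fixed longitude uses the linear independence of the $\eta_j$. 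In (v) your orthogonal-decomposition bound $\rho(T_\al\eta_j,\sspan(\{e_1\}\cup\{T_\al\eta_i\}_{i\neq j}))=\sin\mu_j\,\rho(\eta_j,W_j)\geq \va\sin\al$ is cleaner than the paper's estimate \eqref{3-12:2017}; conversely, for $\rho(e_1,\sspan\{T_\al\eta_j\})$ the paper's direct computation yields the $\va$-free bound $\f{\sin\al}2$ and avoids \eqref{1-7-decomposition} altogether, while your Lagrange-multiplier route also works but only produces $c_{d,\al}\va$ (which suffices for the stated conclusion). Net effect: both arguments succeed; the paper's $E_\al$ shortens (iv)--(v) by exploiting linearity, while your longitude-by-longitude computation makes the geometric content (each longitudinal arc is rescaled to terminal angle $h_\al(\xi_1)\pi/2$) more transparent.
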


\begin{proof}  (i) Assume that   $x=\mathbf{x}(\t_1, \xi)$ and $y=\mathbf{x}(\t_2, \eta)$ with $0\leq \t_1\leq \t_2\leq \f{3\pi}4$. According to Lemma \ref{lem-6-2}, we have
 \begin{equation}\label{2-100}
  \dist(x,y) \sim |\t_1-\t_2|+\sqrt{\t_1\t_2}\dist(\xi,\eta).
  \end{equation}
  and
\begin{align}
  \dist(T_\al x,
  T_\al y)| \sim |h_\al(\xi_1)\t_1-h_\al(\eta_1)\t_2|+
  \sqrt{\t_1\t_2}\dist(\xi,\eta).\label{3-8-2017}
  \end{align}
   By \eqref{derivative},
\begin{align*}
    |h_\al(\xi_1)\t_1-h_\al(\eta_1) \t_2|&\leq \t_1 |h_\al(\xi_1)-h_\al(\eta_1)|+|\t_1-\t_2|\\
    &\leq  C\sqrt{\t_1\t_2} \dist(\xi, \eta)+|\t_1-\t_2|,
\end{align*}
which combined with \eqref{2-100} and \eqref{3-8-2017} implies that
$\dist(T_\al x, T_\al y)\leq C_{d,\al}\dist(x,y)$.
On the other hand, a similar argument shows that
\begin{align*}
|\t_1-\t_2|&\leq \Bl|\f 1{h_\al(\xi_1)}-
\f 1 {h_\al(\eta_1)}\Br|h_\al(\xi_1) \t_1 +
   \f1{ h_\al(\eta_1)}\Bl| h_\al(\xi_1)\t_1 -h_\al (\eta_1)\t_2\Br|\\
    &\leq C_{d,\al} \Bl(\sqrt{\t_1\t_2} \dist(\xi,\eta)+\Bl| h_\al(\xi_1)\t_1 -h_\al (\eta_1)\t_2\Br|\Br),\end{align*}
    which, using \eqref{2-100} and \eqref{3-8-2017} once again, implies the inverse inequality $\dist(x,y)\leq \dist(T_\al x, T_\al y)$.   This completes the proof of (i).

  (ii) Since  $(\vi,\xi)\mapsto \mathbf{x}(\vi,\xi)$ is an injective mapping from $(0, \pi)\times \SS^{d-2}$ to $\sph$, it follows  that $T_\al : \sph\setminus\{-e_1\}\to \sph\setminus\{-e_1\}$ is injective. To show that $T_\al$ is a mapping from $S(0, \f \pi2)$ onto $S(0,\al)$,  we need the following fact, which is a direct consequence of  Definition \ref{defn-3-4:2017}:

  {\bf Fact 1.}\  \   If $0<\al \leq  \f \pi 2$,
then $\mathbf{x}(\vi, \xi)\in S(0,  \al)$ if and only if $\vi=0$ or   $0< \vi\leq  \f \pi 2$, $\xi_1\ge 0$ and $\tan \vi \leq \f {\tan \al}{\xi_1}$, where is agreed that $\tan \f \pi2=\infty$ and $\f {\tan \al}{0}=\infty$.

  Note also that according to Definition \ref{def-1-14:convex},    if   $x=(x_1, x_2,\cdots, x_d)\in \sph\setminus\{-e_1\}$ with  $x_2=0$, then $T_\al x=x$, which in particular implies  $T_\al (e_1)=e_1\in S(0,\al)$.
  If $x=\mathbf{x}(\vi, \xi)\in S(0, \pi/2)\setminus\{e_1\}$, then  $0< \vi\leq \f \pi2$,  $\xi_1\ge 0$ and
$
    \tan (h_\al(\xi_1) \vi)\leq  \tan (\f \pi 2 h_\al(\xi_1))=\f {\tan \al}{\xi_1},
$
 which, by Fact 1, implies that  $T_\a (x)\in S(0,\al)$.   Conversely, if  $y=\mathbf{x}(\vi_1, \xi)\in S(0,\al)\setminus\{e_1\}$, then by Fact 1,  $0< \vi_1\leq \f\pi2$, $0\leq \xi_1\leq 1$,  $\tan (\vi_1)\leq \f {\tan \al}{\xi_1}$,
and hence
\begin{align*}
   0<  \f{\vi_1}{h_\al(\xi_1)} \leq \f 1{h_\al(\xi_1)} \tan^{-1}\Bl(\f {\tan \al}{\xi_1}\Br)=\f \pi2.
\end{align*}
This implies that $x=\mathbf{x}(\f{\vi_1}{h_\al(\xi_1)}, \xi)\in S(0, \f \pi2)$  and $y=T_\al(x)$. Finally, we show $T_\al$ maps boundary of $S(0, \f \pi2)$ to boundary of $S(0,\al)$. Indeed,
if $x=(r, 0 \sqrt{1-r^2}\eta)\in S(0, \f \pi2)$ with $0\leq r\leq 1$, then $T_\al x =x\in S(0,\al)$.
On the other hand, it can easily verified that
 if $x=(0, \xi)\in S(0, \f \pi2)$ with $\xi_1>0$, then $T_\al x= (r\cos \al, \f {(r\sin\al) \xi}{\xi_1}) \in S(0,\al)$ with $r=\f {\xi_1}{ \sqrt{\xi_1^2\cos^2\al +\sin^2\al}}$.

(iii)  Using Fact 1, we  obtain
\begin{align*}
   \int_{S(0,\al)} f(x)\, d\s_d(x)&=\int _{\sub{\xi\in \SS^{d-2}\\
    \xi_1\ge 0}} \int_{0\leq \vi\leq\f \pi 2 h_\al(\xi_1)} f(\mathbf{x}(\vi,\xi))\sin^{d-2}\vi \, d\vi \, d\s_{d-1}(\xi), \end{align*}
   which, by  a change of variable $\vi=h_\al(\xi_1) \t$, equals
   \begin{align*}
   &\int_{\sub{\xi\in \SS^{d-2}\\
   \xi_1\ge 0}} \int_0^{\f\pi 2} f(T_\al (\mathbf{x}(\ta,\xi)))   {\sin^{d-2}(h_\al(\xi_1)\ta)}\, d\ta h_\al(\xi_1)\, d\s_{d-1}(\xi)
   =\int_{S(0, \pi/2)} f(T_\al x) \og(x)\, d\s_d(x).
\end{align*}

(iv) We start with the proof of the following fact:

{\bf Fact 2.}\  \  If $x=(0, \bar{x})\in \sph_0$,  then $T_\al x =\f {E_\al (x)}{\|E_\al (x)\|},$
         where
         $E_\al: \RR^d\to \RR^d$ is a linear operator given by
         $ E_\al y =(y_2\cos\al, \bar{y}\sin \al)$ for $y=(y_1, y_2,\cdots, y_d)=(y_1, \bar{y})\in\RR^d$.

 To see this, let $x=(0, x_2,\cdots, x_d)\in\sph_0$.
 If $x_2=0$, then $T_\al x=x$ and  Fact 2  holds trivially. If $x_2>0$, then  setting  $h=\f \pi 2 h_\al (x_2)=\tan^{-1}\f{\tan \al}{x_2}$, we have
 \begin{align*}
T_\al (x) &= (\cos h, (\sin h) \bar{x})=\f {\cos h}{x_2}( x_2, (\tan \al )\bar{x})
=\f{E_\al (x)}{\|E_\al (x)\|}.
\end{align*}
This proves Fact 2.

Next,  we prove that for  $V=\conv_{\mathbb{S}^{d-1}}\{ \eta_1, \cdots, \eta_{d-1}\},$
  \begin{equation}\label{1-appendix}
     T_\al (V)=\conv_{\mathbb{S}^{d-1}}\{ T_\al(\eta_1), \cdots, T_\al(\eta_{d-1})\}.
  \end{equation}

Indeed,
  if $x\in V$, then $x=\sum_{j=1}^{d-1} t_j \eta_j\in \SS_0^{d-1}$ for some $t_1, \cdots, t_{d-1}\ge 0$, and hence, by Fact 2,
  \begin{align*}
    T_\al (x) =\f{E_\al (x)}{\|E_\al(x)\|} =\sum_{j=1}^{d-1} \f{t_j}{\|E_\al(x)\|} E_\al (\eta_j)=\sum_{j=1}^{d-1} \f{t_j\|E_\al (\eta_j)\|}{\|E_\al(x)\|} T_\al (\eta_j),
  \end{align*}  which  implies that $T_\al (x)\in \conv_{\sph}\{T_\al(\eta_1),\ldots, T_\al(\eta_{d-1})\}$.
Conversely, if $y\in \conv_{\sph}\{T_\al(\eta_1),\ldots, T_\al(\eta_{d-1})\}$, then
  $y=\sum_{j=1}^{d-1} s_j T_\al (\eta_j) $ for some $s_1, \cdots, s_{d-1}\ge 0$, and by Fact 2,
  \begin{align*}
    y& =E_\al \Bl(\sum_{j=1}^{d-1} \f{s_j \eta_j} {\|E_\al (\eta_j)\|}\Br)
    =  c_1 E_\al \Bl(\f{\sum_{j=1}^{d-1} s_j' \eta_j}{\|\sum_{j=1}^{d-1} s_j' \eta_j\|}\Br)=T_\al  \Bl(\f{\sum_{j=1}^{d-1} s_j' \eta_j}{\|\sum_{j=1}^{d-1} s_j' \eta_j\|}\Br)\in T_\al(V),
  \end{align*}
  where $s_j'=\f{s_j } {\|E_\al (\eta_j)\|}$ and
  $c_1=\|\sum_{j=1}^{d-1} s_j' \eta_j\|$.

  Third, we show that for any $\eta\in \SS_0^{d-1}$,
  \begin{equation}\label{1-4-appendix}
    T_\al(\Arc (e_1, \eta))=\Arc(e_1, T_\al(\eta)).
  \end{equation}

 Assume that $\eta=(0, \xi)$ with $\xi=(\xi_1,\cdots, \xi_{d-1})\in\SS^{d-2}$ and $\xi_1\ge 0$.    Then
$\Arc(e_1, \eta)=\Bl\{ (\cos \t, \xi \sin\t):\  \  \t\in [0,\f \pi2]\Br\}.$
 If $x=(\cos \vi, \xi\sin \vi)\in \Arc(e_1, \eta)$ with $\vi\in [0, \f \pi2]$, then setting $h_\al =\f 2\pi \tan^{-1} \f{\tan \al}{\xi_1}$, we have
\begin{align*}
    T_\al (x)&=\f{\sin (h_\al \vi)}{\sin \al} E_\al (\eta) + \Bl(\cos (h_\al \vi)-\f {\sin (h_\al \vi)}{\tan \al}\xi_1\Br) e_1
    =s_1 T_\al (\eta) + s_2 e_1,
\end{align*}
where $s_1=\f{\sin (h_\al \vi)}{\sin \al}\|E_\al(\eta)\|\ge 0$ and $s_2 =\cos (h_\al \vi)-\f {\sin (h_\al \vi)}{\tan \al}\xi_1$.
Since
$\tan (h_\al \vi) \leq \tan (\f \pi 2h_\al )= \f {\tan \al}{\xi_1},$
 it follows that $s_2\ge 0$, and hence  $T_\al x\in \Arc(e_1, T_\al(\eta))$.
Conversely, if $y=t_1 e_1+t_2 T_\al (\eta)\in \Arc (e_1, T_\al (\eta))$ for some $t_1, t_2\ge 0$, then
\begin{align*}
    y =t_1 e_1 + \f {t_2E_\al (\eta)}{\|E_\al(\eta)\|}  =(t_1 + \f {t_2\xi_1\cos\al}{\|E_\al(\eta)\|} ) e_1 + \f {t_2(\sin \al) \eta}{\|E_\al(\eta)\|} \in \Arc(e_1, \eta).
\end{align*}

Finally, we prove \begin{equation}\label{1-4-0-appendix}
    T_\al(S)=\conv_{\mathbb{S}^{d-1}}
\{T_\al(\eta_1), T_\al(\eta_2),\ldots, T_\al(\eta_{d-1}), e_1\}.
 \end{equation}
Indeed,   it is easily seen that each  $x\in S$ can be written  in the form  $x=\sqrt{1-t^2}\eta +t e_1\in \Arc (e_1, \eta)$ for some $t\ge 0$ and  $\eta\in V=\conv_{\mathbb{S}^{d-1}}\{ \eta_1, \cdots, \eta_{d-1}\}$. It follows by \eqref{1-4-appendix} that
$ T_\al (x) \in \Arc(e_1, T_\al (\eta))$. However, by \eqref{1-appendix},
$$T_\al (\eta)\in T_\al(V)= \conv_{\sph}\{ T_\al(\eta_1),\cdots, T_\al(\eta_{d-1})\}.$$
Thus,
$$ T_\al (x) \in \Arc(e_1, T_\al (\eta))\subset
\conv_{\sph}\{ T_\al(\eta_1),\cdots, T_\al(\eta_{d-1}), e_1\}.$$
Conversely, if $y\in \conv_{\sph}\{ T_\al(\eta_1),\cdots, T_\al(\eta_{d-1}), e_1\}$, then
$y=s_0e_1+\sum_{j=1}^{d-1} s_j T_\al(\eta_j)$ for some $s_0, s_1,\cdots, s_{d-1}\ge 0$.  According to \eqref{1-appendix},
there exists $\eta\in V$ such that
$\sum_{j=1}^{d-1} s_j T_\al(\eta_j)=c_2 T_\al(\eta),$
with $c_2=\|\sum_{j=1}^{d-1} s_j T_\al(\eta_j)\|$. It then  follows by \eqref{1-4-appendix} that
$$ y \in \Arc (e_1, T_\al (\eta)) =T_\al \Bl( \Arc (e_1, \eta)\Br)\subset T_\al (S).$$

 (v)\   \
According to Fact 2,   we have
\begin{align*}
   &\min_{t_1,\cdots, t_{d-1}\in\RR}
    \Bl\|e_1 -\sum_{j=1}^{d-1} t_j T_\al (\eta_j) \Br\|=
    \min_{t'_1,\cdots, t'_{d-1}\in\RR} \Bl\|e_1 -E_\al \Bl(\sum_{j=1}^{d-1} t_j' \eta_j\Br)\Br \|\ge \min_{y\in\RR^{d-1}} \Bl\|e_1 -E_\al \Bl((0,y)\Br)\Br \|\\
    &= \min_{ y\in\RR^{d-1}}\|(1-y_1\cos \al, y\sin \al)\|\ge \min_{ y\in\RR^{d-1}}\max \Bl\{ 1-\|y\|\cos\al, \|y\|\sin\al\Br\}\ge \f { \sin \al}2.
\end{align*}
By symmetry, it remains to show that
\begin{align}
    \min_{t_2,\cdots, t_{d}\in \RR}\Bl \|T_\al (\eta_1)-\sum_{j=2}^{d-1} t_j T_\al (\eta_j)-t_d e_1\Br\|\ge c_{d,\al}\va.\label{3-12:2017}
\end{align}
Indeed, since $\sin \al \leq \|E_\al (\eta_1)\|\leq \sqrt{2}$, we obtain from  Fact 2 that
\begin{align*}
  &\text{LHS of \eqref{3-12:2017}} \ge \f 1{\sqrt{2}}  \min_{t_2,\cdots, t_{d}\in \RR}\Bl\|E_\al \bl(\eta_1 -\sum_{j=2}^{d-1} t_j \eta_j\br)-t_d e_1\Br\|\ge \f 1{\sqrt{2}}  \min_{\sub{y\in\RR^{d-1},\  \|y\|\ge \va\\
  t\in\RR}}\Bl\|E_\al \bl((0,y)\br)-t e_1\Br\|\\
  &= \f 1{\sqrt{2}}  \min_{\sub{ y\in\RR^{d-1},\ \|y\|\ge \va_d\\
  t\in\RR}}\Bl\|(y_1\cos\al -t, y\sin\al) \Br\|\ge \f {\va_d \sin\al} {\sqrt{2}}.
\end{align*}

\end{proof}

\subsection{Proof of Proposition \ref{prop-1-13-appendix}}  Without loss of generality, we may assume that $N\ge N_{d,w}$.
Given $0\leq \al<\be \leq  2\pi$, let  $S(\al,\be)$ denote the   subset of $\sph$ given  in Definition \ref{defn-3-4:2017}.
The following assertion plays an important role in the proof of Proposition \ref{prop-1-13-appendix}.\\

 {\bf Assertion A.}\  \  If  $N w(S(0,\f\pi2))\in\NN$, then  there exists  a convex partition $\{ S_{1}, \cdots, S_{n_0}\}$ of the set $S(0,\f\pi2)$ such that each $S_j$ is a geodesic simplex spanned by the vector $e_1$ and a set of $d-1$ strongly $\va_d$-separated points in  $\SS_0^{d-1}=\{x\in\sph:\  \ x_1=0,\  \ x_2\ge 0\}$, and satisfying the condition $N w(S_j)\in\NN$.\\

For the moment, we take Assertion A for granted and proceed with the proof of the proposition. Note that if  $0\leq \al <\be \leq 2\pi$ and  $\be-\al\ge \f \pi6$, then $S(\al,\be)$ contains a  spherical cap of radius $\ge \f 1{20}$. It follows by continuity  that $\sph$ has a partition   $\sph=\bigcup_{j=1}^{\ell_0} S(\al_{j-1},\al_j)$ with    $1\leq \ell_0\leq 12$ such that  $\al_0=0$,   $\f \pi 6\leq \al_i-\al_{i-1}< \f \pi2$  and $Nw(S(\al_{i-1},\al_i))\in\NN$ for $i=1,\cdots, \ell_0$.
Thus,  replacing  $w$ with $w\circ Q$ for some rotations $Q\in SO(d)$, we reduce  to  proving  the following assertion:\\

{\bf Assertion B:}\  \ If $\al\in [\f \pi6, \f \pi 2]$ and $N w(S(0,\al))\in\NN$, then  there exists  a convex partition $\{ S_{1}, \cdots, S_{n_0}\}$ of the set $S(0,\al)$ such that  $S_{j}\in \mathcal{S}_{\va_d}$ and $N w(S_j)\in\NN$ for $j=1,\cdots, n_0$.  \\

The proof of Assertion B relies on Assertion A and the nonlinear bijective mapping $T_\al : S(0, \f\pi2)\to S(0, \al)$ introduced  in Definition \ref{def-1-14:convex}.
 Let $w_1(x):=w(T_\al(x))\og(x)$ for $x\in S(0, \f \pi2)$, where $\og$ is the function given in \eqref{1-12:convex}.
  According to \eqref{1-3-appendix}, $w_1(E)=w(T_\al(E))$ for each $E\subset S(0, \f\pi2)$.  Moreover, by By Lemma \ref{lem-3-6:2017} (i), (ii) and (iii), we may apply  Assertion A  to the weight $w_1$ on $S(0, \f \pi2)$ and  obtain  a partition  $\{V_1,\cdots, V_{n_0}\}$ of $S(0, \f \pi2)$ with the stated properties of Assertion A (with  $w_1$ in place of  $w$). Now setting  $S_j=T_\al(V_j)$, we get a partition $\{S_1, \cdots, S_{n_0}\}$ of $S(0,\al)$.
   By Lemma \ref{lem-3-6:2017} (iv) and (v),  each $S_j$ is a geodesic simplex in the class $\mathcal{S}_{\va_d}$,  whereas by Lemma \ref{lem-3-6:2017} (iii),
    $N w(S_j)=Nw_1(T_j)\in\NN$ for $1\leq j\leq n_0$.  This  proves Assertion B.

It remains to show Assertion A.
 We start with the case of $d=3$. Note that
 $$S(0, \f \pi 2) =\Bl\{ (\cos \vi, \sin\vi \sin\t, \sin\vi \cos\t):\  \  0\leq \vi\leq \f \pi2,\  \  0\leq \t\leq \pi\Br\}.$$
  By continuity,  there exists a point $\xi=(0, \sin\t, \cos\t)\in \SS_0^{2}$ for some $\t\in (\f \pi6, \f \pi2)$  such that $S(0, \f \pi2)=S_1\cup S_2$ with $S_1: =\conv_{\SS^2} \{ e_1, -e_3, \xi\}$ and  $S_2: =\conv_{\SS^2} \{ e_1, e_3, \xi\}$, and such that
 $Nw(S_1), N w(S_2)\in\NN$. This shows Assertion A and hence Proposition \ref{prop-1-13-appendix} for $d=3$.

   Next, we show Assertion A for $d\ge 4$.
   We  use induction on the dimension $d$. Assume that the conclusion of Proposition \ref{prop-1-13-appendix}  holds on    the spheres $\SS^{\ell-1}$,  $\ell=3,\cdots, d-1$. We shall prove that Assertion A holds for $d\ge 4$, which in turn  implies Proposition \ref{prop-1-13-appendix} for $d\ge 4$.
   The proof relies on the following formula:  \begin{align}
    &\int_{S(0, \pi/2)} f(x)\, d\s(x)
     = \int_{\SS^{d-3}} \int_0^1 \int_{0}^{\pi/2} f(r\cos\t, r\sin \t, \sqrt{1-r^2}\xi) d\t (1-r^2)^{\f {d-4}2}\, dr \, d\s(\xi),\label{1-16:convex}
\end{align}
where $f$ is  a nonnegative function   on $S(0, \pi/2)$.

For $\eta\in\SS^{d-3}$,
define
$$ \wt{w}(\eta):=\f{1}{\og}\int_0^{\f \pi2} \int_0^1 w(r\cos\t, r\sin\t, \sqrt{1-r^2}\eta)(1-r^2)^{\f {d-4}2}\, dr \, d\t,$$
where $\og=w(S(0, \pi/2))=\f {k_0}N$ for some positive integer $k_0$.
Given a set $E\subset \SS^{d-3}$, set
 $$ \wh{E}:=\Bl\{ (x_1, x_2, \sqrt{1-x_1^2-x_2^2} \eta):\  \ x_1, x_2\ge 0,\  \ x_1^2+x_2^2\leq 1,\   \ \eta\in E\Br\}.$$
   It can be easily seen that if  $E$ is a geodesic simplex  in $\SS^{d-3}$ spanned by a set  of linearly independent points $\eta_{1},\cdots, \eta_{d-2}\in\SS^{d-3}$,   then $\wh{E}$  is a geodesic simplex in $\SS^{d-1}$ spanned by the set
  $\{e_1, e_2, \wh{\eta}_{1}, \cdots, \wh{\eta}_{d-2}\}$,
 with $\wh{\eta}_{j} =(0,0, \eta_{j})\in\SS_0^{d-1}$ for $j=1,\cdots, d-2$, and moreover,
by \eqref{1-16:convex},
$\wt{w}(E)=\og^{-1} w(\wh{E})$.
Now applying the induction hypothesis to the weight $\wt{w}$ on the sphere $\SS^{d-3}$ with $k_0$ in place of $N$, we obtain a  partition  $\{E_1, \cdots, E_{m_0}\}$ of the sphere   $\SS^{d-3}$, where each $E_j$ is a geodesic simplex in $\SS^{d-3}$ spanned by a  set of  strongly $\va_{d-2}$-separated points $ \eta_{j, 1},\cdots, \eta_{j, d-2}\in \SS^{d-3}$  and satisfying
 $k_0 \wt{w}(E_j)\in\NN$.   It then follows that  $\{ \wh{E}_1,\cdots, \wh{E}_{m_0}\}$ is  a  partition of $S(0, \f \pi2)$ where each $\wh{E}_j$ is a geodesic simplex spanned by the vector $e_1$ and the  set of strongly $\va_d$-separated points $\wh{\eta}_{j,1},\cdots, \wh{\eta}_{j,d-2}, e_2\in \SS_0^{d-1}$, and satisfies that
  $Nw(\wh{E}_j)=k_0  \wt{w}(E_j)\in \NN$ for  each $j$.
This proves Assertion A for $d\ge 4$.

\section{Preliminary lemmas}

For the proof of Theorem \ref{sphdesign201}, in addition to the convex partitions of the weighted sphere (i.e.,  \thmref{thm-partition}),  we shall also need  several preliminary lemmas,  which we  state or prove in this section.

Throughout this section, $w$ denotes a normalized doubling weight on $\sph$ with doubling constant $L_w$. All the general constants $c_w, C_w, \da_w$ depend only on $d$ and the doubling constant of $w$.  Given $1\leq p\leq \infty$, we denote by $\|\cdot\|_{p,w}$ the $L^p$-norm defined with respect to the measure $w(x) d\s(x)$ on $\sph$.
A finite subset $\Lambda$ of $\sph$ is called $\va$-separated for a given $\va>0$ if $\dist(\o,\o')\geq \va$ for every two distinct points $\o, \o'\in\Ld$.
A $\va$-separated subset $\Ld$ is called maximal if  $\bigcup_{\o\in\Lambda}B(\o,\va)=\sph$.

The following weighted polynomial inequalities were established in  \cite{F_W} (see also \cite[Theorem 5.3.6.]{Dai2} and \cite[Theorem 5.3.4.]{Dai2}):
\begin{lem}\cite{F_W}\label{sphdesign301} Let $n$ be a  positive integer and  $\d$  a parameter in $ (0,1)$. Assume that $1\leq p<\infty$ and $\Ld$ is a  $\f\d n$-separated subset
of $\sph$ .
\begin{enumerate}[\rm (i)]
\item   For every spherical polynomial $f\in\Pi_n^d$,
\begin{equation}\label{}
\(\sum_{\o\in \Lambda} \left|\osc(f)(\o)\right|^p w(B_\og)\)^{1/p}\leq C_w \da\|f\|_{p,w},
\end{equation}
where $B_\og=B(\og, n^{-1}\da)$
and $\osc(f)(\og)=\max_{y\in B_\og} |f(y)|$  for $\og\in \Ld$.

\item If, in addition,    $\Ld$  is   maximal $\f {\da} n$-separated  with $0<\da<c_w$ and $c_w$ being  a small constant, then for each given parameter $\ga>1$, and every  $f\in \Pi_n^d$,
   \begin{align*}
   \|f\|_{p,w} &\sim \Bl( \sum_{\og\in \Ld} (\min_{x\in \ga B_\og} |f(x)|^p) w(B_\og)\Br)^{\f1p}
    \sim \Bl( \sum_{\og\in \Ld} (\max_{x\in \ga B_\og} |f(x)|^p)w(B_\og)\Br)^{\f1p},
   \end{align*}
   where  $B_\og=B(\og, n^{-1}\da)$  for $\og\in \Ld$, and the constants of equivalence depend only on $d$, $\ga$ and the doubling constant of $w$.
   \end{enumerate}
   \end{lem}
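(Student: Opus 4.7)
My plan is to treat Lemma \ref{sphdesign301} as the standard weighted Marcinkiewicz--Zygmund inequality for doubling weights and reconstruct it in two main steps. The key analytic facts I would invoke are the Christoffel--function equivalence $\lambda_n(w)(x)\sim w(B(x,n^{-1}))$ valid for doubling weights on $\sph$, together with the classical Bernstein inequality for spherical polynomials. Both hold with constants depending only on $d$ and $L_w$, which is what the lemma requires.

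The first step is a local Nikolskii-type inequality: for every $f\in\Pi_n^d$, every $\omega\in\sph$, every $0<\delta<1$ and every $\gamma\ge 1$,
\begin{equation*}
\max_{x\in \gamma B_\omega}|f(x)|^p \;\leq\; \frac{C_{w,\gamma}}{w(B_\omega)}\int_{C\gamma B_\omega}|f(x)|^p\, w(x)\,d\sigma_d(x),
\end{equation*}
where $B_\omega=B(\omega,n^{-1}\delta)$. This follows from the Christoffel function estimate together with the doubling property applied at scale $n^{-1}$. The second step is an oscillation bound: by applying Bernstein's inequality along an arbitrary geodesic through $B_\omega$, one gets
\begin{equation*}
\max_{y,z\in B_\omega}|f(y)-f(z)| \;\leq\; C\delta \max_{x\in 2B_\omega}|f(x)|,
\end{equation*}
which is the source of the prefactor $\delta$ appearing on the right-hand side of part~(i).

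To prove part (i), I raise the oscillation bound to the $p$-th power, multiply by $w(B_\omega)$, sum over $\omega\in\Lambda$, and apply the first step with $\gamma=2$. The separation hypothesis $\dist(\omega,\omega')\ge \delta/n$ guarantees that the dilated caps $\{CB_\omega:\omega\in\Lambda\}$ have bounded overlap (depending only on $d$ via a standard volume-doubling packing argument), so the sum of the right-hand sides is controlled by $C_w\delta^p\|f\|_{p,w}^p$; taking $p$-th roots yields the claim. For part (ii), maximality of $\Lambda$ as a $\delta/n$-net implies $\sph=\bigcup_{\omega\in\Lambda}\gamma B_\omega$ once $\gamma$ is a fixed constant, so the upper estimate $\|f\|_{p,w}^p\le C\sum_\omega(\max_{\gamma B_\omega}|f|)^p\, w(B_\omega)$ is immediate from covering plus doubling. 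For the matching lower estimate, step~1 converts $\int_{\gamma B_\omega}|f|^p\,w\,d\sigma_d$ into $(\min_{\gamma B_\omega}|f|)^p\, w(B_\omega)$ up to oscillation error; the oscillation error is absorbed by choosing $\delta<c_w$ small enough, using part~(i) on an enlarged net, so min and max over $\gamma B_\omega$ become comparable to the local $L^p$ mass.

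The main obstacle I expect is the absorption argument at the end of part (ii): one needs to ensure that the oscillation contribution can genuinely be moved to the left-hand side, which relies on choosing the smallness threshold $c_w$ in a way that depends only on $L_w$ and $d$. This in turn rests on having the constants in the Christoffel--function estimate and in Bernstein's inequality depend only on the doubling constant, which is the nontrivial input from \cite{F_W}.
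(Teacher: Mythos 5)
The paper offers no proof of this lemma: it is imported as a known result from \cite{F_W} (see also Theorems 5.3.4 and 5.3.6 of the Dai--Xu book cited there), so there is no in-paper argument to compare against, and your proposal must be judged on its own. Your overall architecture --- a local Nikolskii inequality, an oscillation estimate, bounded overlap of the dilated caps for a separated family, and an absorption argument for the lower bound in (ii) --- is indeed the standard route, and your reading of $\osc(f)(\og)$ as the true oscillation $\max_{y,z\in B_\og}|f(y)-f(z)|$ (rather than the local maximum literally printed in the statement, which is a typo) is the intended one.

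However, your Step 2 is false as stated, and this is a genuine gap. The inequality $\max_{y,z\in B_\og}|f(y)-f(z)|\le C\d\max_{x\in 2B_\og}|f(x)|$ fails near the zeros of $f$: for the circle analogue $f(\t)=\sin(n\t)$ with $\og=0$, the oscillation over $B_\og=[-\d/n,\d/n]$ is $2\sin\d\approx 2\d$ while $\max_{2B_\og}|f|=\sin(2\d)\approx 2\d$, so your bound would force $1\le C\d$; the same phenomenon occurs on $\sph$ for, say, Gegenbauer polynomials near their zeros. Equivalently, any local Bernstein inequality of the form $\max_{B_\og}\|\nabla_0 f\|\le C n\max_{2B_\og}|f|$ necessarily carries a constant of order $\d^{-1}$, which exactly cancels the factor $\d$ that part (i) is about. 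The correct argument keeps the gradient: $\osc(f)(\og)\le \f{2\d}{n}\max_{B_\og}\|\nabla_0 f\|$, after which one applies the upper Marcinkiewicz--Zygmund/Nikolskii estimate of your Step 1 to the components of $\nabla_0 f$ (polynomials of degree at most $n+1$) and then, crucially, the weighted Bernstein inequality $\|\nabla_0 f\|_{p,w}\le C_w\, n\,\|f\|_{p,w}$ for doubling weights --- a substantive input from \cite{F_W} that your outline omits. (The source in fact bounds $\osc(f)(\og)$ by $\d$ times a localized maximal quantity $n^{d-1}\int_{\sph}|f(u)|(1+n\dist(\og,u))^{-\ell}\,d\s(u)$ built from a fast-decaying reproducing kernel, and sums that.) Part (ii) as you sketch it is sound once (i) is repaired; a minor further point is that for $p\neq 2$ your Step 1 needs the unweighted local Nikolskii inequality together with the equivalence $\|f\|_{p,w}\sim\|f\|_{p,w_n}$, not the Christoffel function alone.
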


The   weighted
Christoffel function on $\sph$  is defined by
$$\l_n (w, x):=\inf_{P_n(x)=1} \int_{\sph}
 |P_n(x)|^2 w(x)\, d\s(x),\   \ n=0,1,2,\cdots,$$
 where  the infimum is taken over all spherical polynomials of degree $n$ on $\sph$ that take the value $1$ at the point $x\in\sph$.  The following pointwise estimate of  $\l_n (w, x)$ is proved in \cite{DaiFen}. (See also \cite{MT2} for the case of $d=2$).
 \begin{lem}\label{lem-3-2-0}\cite{DaiFen} For $x\in\sph$ and $n\in\NN$,
\begin{equation}\label{4-2}\l_n(w,x)\sim \int_{B(x, n^{-1})} w(y)\, d\s(y),\end{equation}
where the constant of equivalence depends only on $d$ and $L_w$.
\end{lem}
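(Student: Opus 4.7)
The plan is to establish the two-sided bound $\lambda_n(w,x)\sim w(B(x,n^{-1}))$ by proving the upper and lower inequalities separately. The doubling property \eqref{1-5-2017} will be used in both directions to compare $w$-measures of spherical caps of comparable radii.

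For the upper bound $\lambda_n(w,x)\le C_w\, w(B(x,n^{-1}))$, I would exhibit a single explicit competitor. A Jackson-type (or a sufficiently high power of the de la Vall\'ee Poussin) kernel $J_n(x,\cdot)\in \Pi_n^d$, normalized by $J_n(x,x)=1$ and satisfying the standard fast-decay estimate
\[
|J_n(x,y)|\le C_k(1+n\,\dist(x,y))^{-k},\qquad y\in\sph,
\]
for $k$ arbitrarily large, serves this purpose. Splitting $\sph$ into dyadic annuli $A_j:=B(x,2^jn^{-1})\setminus B(x,2^{j-1}n^{-1})$ and applying \eqref{1-5-2017}, i.e.\ $w(B(x,2^jn^{-1}))\le 2\cdot 2^{js_w}w(B(x,n^{-1}))$, gives
\begin{align*}
\lambda_n(w,x)\ \le\ \int_\sph |J_n(x,y)|^2 w(y)\,d\s(y)
&\ \le\ w(B(x,n^{-1}))+C\sum_{j\ge1}2^{-2jk}\,w(B(x,2^jn^{-1}))\\
&\ \le\ C_w\, w(B(x,n^{-1})),
\end{align*}
upon choosing $k>s_w/2$.

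For the lower bound $\lambda_n(w,x)\ge c_w\, w(B(x,n^{-1}))$, it suffices to show that every $P\in\Pi_n^d$ with $P(x)=1$ satisfies $\int_\sph|P|^2w\,d\s\ge c_w\, w(B(x,n^{-1}))$. I would argue in two stages. First, the unweighted Nikolski-type inequality on a spherical cap is classical: combining the local Bernstein--Markov gradient estimate $|\nabla_{\rm tan}P(y)|\le Cn\max_{B(x,2/n)}|P|$ with a Nikolski passage from $L^\infty$ to $L^2$ on a cap of radius $\sim 1/n$ yields
\[
|P(x)|^2\,\s(B(x,c_dn^{-1}))\ \le\ C_d\int_{B(x,c_dn^{-1})}|P|^2\,d\s.
\]
Second, I would transfer this to $w\,d\s$ using that a polynomial of degree $n$ does not oscillate much on a cap of radius $1/n$, so the doubling weight $w$ satisfies the reverse-H\"older--type comparison
\[
\frac{1}{\s(B(x,c_dn^{-1}))}\int_{B(x,c_dn^{-1})}|P|^2\,d\s\ \le\ \frac{C_w}{w(B(x,c_dn^{-1}))}\int_{B(x,c_dn^{-1})}|P|^2w\,d\s,
\]
which is the polynomial analogue of the $A_\infty$ property of doubling measures. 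Combining these two inequalities and invoking \eqref{1-5-2017} once more to replace $w(B(x,c_d/n))$ by $w(B(x,n^{-1}))$ completes the lower bound.

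The main obstacle is the last step: transferring the unweighted local inequality to one against a possibly very degenerate doubling weight $w$. While the upper bound is essentially routine, the lower bound relies on the polynomial $A_\infty$--type property of doubling measures on $\sph$ — namely, that polynomials of degree $n$ are flat enough on caps of radius $1/n$ for the doubling weight to be averaged there. This is the key technical input drawn from \cite{DaiFen}, and is what makes the result work uniformly for all doubling weights, including those that vanish or blow up at $x$.
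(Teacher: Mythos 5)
The paper does not prove this lemma --- it is quoted verbatim from \cite{DaiFen} --- so there is no in-paper argument to compare against; I am judging your proposal on its own merits. Your upper bound is correct and standard: the normalized near-best-approximation kernel $J_n(x,\cdot)=L_n(x,\cdot)/L_n(x,x)\in\Pi_n^d$ has the stated decay, and the dyadic decomposition together with \eqref{1-5-2017} closes the estimate once $2k>s_w$.

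The lower bound, however, has a genuine gap: both of your intermediate inequalities place the integral over the small cap $B(x,c_dn^{-1})$ on their right-hand sides, and in that localized form they fail. Step 1, $|P(x)|^2\,\s(B(x,c_dn^{-1}))\le C_d\int_{B(x,c_dn^{-1})}|P|^2\,d\s$, is false for general $P\in\Pi_n^d$: the extremal polynomial for the Christoffel function of the small cap itself (the analogue of $T_n(t/h)$ on an interval $[-h,h]$ with $h\sim 1/n$, which is exponentially large off the cap) satisfies $\int_{B(x,c/n)}|P|^2\,d\s\sim n^{-1}\s(B(x,c/n))|P(x)|^2$ already on the circle. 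Relatedly, the local Bernstein bound you invoke should read $|\nabla_0P(y)|\le Cn^2\max_{B(x,2/n)}|P|$ at scale $1/n$, not $Cn\max_{B(x,2/n)}|P|$; the extra factor of $n$ is exactly what kills the localized Nikolskii. Step 2, the reverse comparison with the same cap on both sides, is likewise stronger than what the doubling machinery of \cite{MT2, F_W} provides: the known inequalities (e.g.\ Lemma \ref{sphdesign301}(ii) of this paper) carry the \emph{global} norm $\|P\|_{p,w}^p$ on the right, not $\int_B|P|^pw$. The repair is short and is presumably the actual argument of \cite{DaiFen}: a single term of Lemma \ref{sphdesign301}(ii) gives, for any $P\in\Pi_n^d$ with $P(x)=1$,
\begin{equation*}
w(B(x,n^{-1}))\ \le\ \Bl(\max_{y\in B(x,n^{-1})}|P(y)|^2\Br)\,w(B(x,n^{-1}))\ \le\ C_w\int_{\sph}|P|^2w\,d\s,
\end{equation*}
which yields $\l_n(w,x)\ge c_w\,w(B(x,n^{-1}))$ directly, with no need for the unweighted step at all. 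Keeping the global integral on the right is not optional here; it is what absorbs the competitors that are small near $x$ but huge elsewhere.
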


The tangential gradient of $f\in C^1(\RR^d)$ is defined as
$$\nabla_0 f(x) = \nabla f(x)-\f{x\cdot \nabla f(x)}{\|x\|^2} x=\nabla F(x),\   \  \   \  x\in \RR^d\setminus\{0\},$$
where $F(y)=F_x(y)= f\Bl( \f {\|x\| y}{\|y\|}\Br)$  for $y\in \RR^d\setminus\{0\}$.

We shall need the following lemma from \cite{Bond1}, whose  detailed proof can be found in the book \cite[p.145-147]{Dai2}.

\begin{lem}\label{cor-4-2} \cite{Bond1} For $\va>0$, $x\in\sph$ and
 each fixed spherical polynomial   $P\in\Pi_n^d$, the differential equation
\begin{equation}\label{4-7}
\begin{cases}
y'(s) &=\df{\nabla_0 P(y(s))}{\max\{\va,\|\nabla_0 P(y(s))\|\}},\   \   \  s\ge 0,\\
y(0)&=x\in\sph,\end{cases}\end{equation}
has a unique solution $y=y(P,s)\equiv y(P, x; s)\in \sph$ on $[0,\infty)$ with  the following  properties:
 \begin{enumerate}[\rm (i)]
 \item   $y(P,x; s) \in \sph$ for all  $s\ge 0$, $x\in\sph$ and $P\in\Pi_n^d$;
     \item for each fixed $s\ge 0$ and $x\in\sph$,
$P\mapsto y(P,x; s)$ is a continuous mapping from   $\Pi_n^d$ to $\sph$;
\item If $x, x'\in \sph$ and $x\neq x'$, then
    $$ y(P, x; s)\neq y(P, x'; s),\   \  \forall P\in \Pi_n^d, \   \  \forall s\ge 0. $$

\end{enumerate}
\end{lem}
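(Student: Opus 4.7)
My plan is to view the problem as a standard application of Picard--Lindel\"of together with continuous dependence on initial data and parameters, adapted to the constraint that the trajectories remain on $\sph$.

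First, I would fix $P\in \Pi_n^d$ and $\va>0$ and extend the right-hand side to all of $\RR^d\setminus\{0\}$ by the same formula, defining
\[ F_P(y):= \frac{\nabla_0 P(y)}{\max\{\va,\|\nabla_0 P(y)\|\}}. \]
Since $P$ is a polynomial, $\nabla_0 P$ is a smooth (in fact real-analytic) vector field on $\RR^d\setminus\{0\}$, hence locally Lipschitz. The scalar map $v\mapsto v/\max\{\va,\|v\|\}$ on $\RR^d$ is globally Lipschitz with constant at most $C/\va$ (a short case analysis splits it into the regions $\|v\|\le \va$ and $\|v\|\ge \va$), so $F_P$ is Lipschitz in a neighborhood of $\sph$ with $\|F_P(y)\|\le 1$ everywhere. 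Picard--Lindel\"of then gives a unique maximal $C^1$ solution $y(\cdot)$ with $y(0)=x$; because $\|y'(s)\|\le 1$ and $F_P$ is Lipschitz on a neighborhood of $\sph$, the solution cannot blow up and extends to all of $[0,\infty)$, provided we know it stays on $\sph$.

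For (i) I would check directly that $\sph$ is invariant. The key fact is that $\nabla_0 P(y)$ is tangential, i.e.\ $y\cdot \nabla_0 P(y)=0$ for every $y\in\sph$. Hence
\[ \frac{d}{ds}\|y(s)\|^2 = 2\,y(s)\cdot y'(s) = \frac{2\,y(s)\cdot \nabla_0 P(y(s))}{\max\{\va,\|\nabla_0 P(y(s))\|\}}=0, \]
so $\|y(s)\|^2\equiv \|x\|^2=1$ and the solution remains on $\sph$ for all $s\ge 0$.

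For (ii), I would invoke the standard continuous dependence theorem for ODEs on a parameter: if $F_P(y)$ is jointly continuous in $(P,y)$ and uniformly Lipschitz in $y$ on compact sets of $P$, then on every compact time interval $[0,s]$ the map $P\mapsto y(P,x;\cdot)$ is continuous in the sup norm on $C([0,s];\sph)$. The joint continuity follows because $\Pi_n^d$ is finite-dimensional, so convergence in any norm on $\Pi_n^d$ yields uniform convergence of the coefficient-polynomials $\nabla_0 P$ and hence of $F_P$ on compact subsets of $\RR^d\setminus\{0\}$; the local Lipschitz constant depends on $\|P\|$ and $\va$ only.

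For (iii), I would use backward uniqueness of the Picard--Lindel\"of flow. Given $P\in\Pi_n^d$ and $s\ge 0$, denote by $\Phi_s^P:\sph\to\sph$ the time-$s$ flow map $x\mapsto y(P,x;s)$. Because the ODE $z'(\tau)=-F_P(z(\tau))$ is also Lipschitz, its forward solution starting from $y(P,x;s)$ at time $\tau=0$ recovers $x$ at $\tau=s$, so $\Phi_s^P$ has a well-defined inverse on its image; in particular it is injective. Therefore $y(P,x;s)=y(P,x';s)$ forces $x=x'$, which is (iii).

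The only nontrivial step is verifying that the capped field $F_P$ is Lipschitz near $\sph$; once that is in place, the remaining three assertions are essentially free from the general ODE theory together with the tangentiality identity $y\cdot\nabla_0 P(y)=0$.
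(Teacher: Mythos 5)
Your proof is correct and follows essentially the same route as the argument the paper cites (the lemma is quoted from \cite{Bond1}, with the detailed proof in \cite[p.\ 145--147]{Dai2}): Picard--Lindel\"of for the capped, locally Lipschitz field on $\RR^d\setminus\{0\}$, the tangentiality identity $y\cdot\nabla_0 P(y)=0$ to keep the flow on $\sph$ and hence global, continuous dependence on the parameter $P$ in the finite-dimensional space $\Pi_n^d$, and injectivity of the time-$s$ flow via the time-reversed equation. No gaps.
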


As a consequence of Lemma \ref{cor-4-2}, we have the following useful corollary:

\begin{cor} \label{cor-4-3}Let $R\subset \sph$ be a closed geodesically convex subset of $\sph$ with $\max_{x,y\in R} \dist(x,y)<\f \pi2$. Assume that $P\in\Pi_n^d$ and $z_{\max}\in R$ is such that $P(z_{\max})=\max_{z\in R} P(z)$. Then for each $x\in R$,
$$ P(z_{\max})-P(x) \ge (\min_{z\in R} \|\nabla_0 P(z)\|)  \dist(x, \p (R)),$$
where $\dist(x, \p (R))=\inf_{y\in \p (R)} \dist(x,y)$.
\end{cor}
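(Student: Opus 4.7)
The plan is to apply the gradient-flow lemma \ref{cor-4-2} to trace a curve from $x$ along the direction of steepest ascent of $P$, and to show that $P$ increases by at least $M\cdot\dist(x,\p R)$ along this curve before it exits $R$, where $M:=\min_{z\in R}\|\nabla_0 P(z)\|$. The case $M=0$ is vacuous since $P(z_{\max})\ge P(x)$ automatically, so I may assume $M>0$ and choose the parameter in \eqref{4-7} to satisfy $\va<M$. Let $y(s)=y(P,x;s)\in\sph$ denote the resulting trajectory supplied by Lemma \ref{cor-4-2}.

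For the main step I would verify two elementary properties of $y$ while it remains in $R$. First, since $\|\nabla_0 P(y(s))\|\ge M>\va$, the denominator in the ODE \eqref{4-7} equals $\|\nabla_0 P(y(s))\|$, so the curve has unit speed $\|y'(s)\|=1$. Second, because $y(s)\in\sph$ the vector $y'(s)$ is tangent to $\sph$, which permits replacing $\nabla P$ by $\nabla_0 P$ in the chain rule and gives
\begin{equation*}
\tfrac{d}{ds}P(y(s))=\nabla P(y(s))\cdot y'(s)=\|\nabla_0 P(y(s))\|\ge M.
\end{equation*}
Integrating yields $P(y(s))\ge P(x)+Ms$ as long as $y(s)\in R$.

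The third step is to control the first exit time $s_0:=\inf\{s\ge 0:y(s)\notin R\}$. Since $P$ is bounded on $\sph$ while the linear lower bound $P(x)+Ms$ is not, $s_0$ must be finite; and since $R$ is closed and $y$ is continuous, the minimality of $s_0$ forces $y(s_0)\in\p R$ (otherwise $y(s)$ would remain in $R$ on a neighborhood of $s_0$, contradicting the definition of $s_0$). The arclength of the curve from $x$ to $y(s_0)$ equals $s_0$ by the unit-speed property, so $\dist(x,\p R)\le\dist(x,y(s_0))\le s_0$. Combining everything gives
\begin{equation*}
P(z_{\max})-P(x)\ge P(y(s_0))-P(x)\ge Ms_0\ge M\cdot\dist(x,\p R),
\end{equation*}
which is the desired bound.

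The only point I expect to require a little care is confirming that $s_0<\infty$ and that $y(s_0)\in\p R$; both follow from the boundedness of $P$ on $\sph$ together with $R$ being closed, as sketched above. (If $x\in\p R$ the statement is trivial, so I may assume $x\in\inte(R)$, which also forces $s_0>0$.) The hypothesis $\diam(R)<\pi/2$ is used only to keep $R$ inside an open hemisphere so that the geodesic distance behaves well and the arclength-versus-distance comparison is unambiguous.
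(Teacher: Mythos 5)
Your proposal is correct and follows essentially the same route as the paper: apply the gradient-flow Lemma~\ref{cor-4-2} starting at $x$ with $\va$ below $\min_{z\in R}\|\nabla_0 P(z)\|$, use boundedness of $P$ on $\sph$ to show the unit-speed trajectory must exit $R$, locate the exit point on $\p R$, and compare the arclength to $\dist(x,\p R)$. No gaps.
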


Corollary \ref{cor-4-3} was used without proof in \cite{Bond2}.   For completeness, we include a proof here.

\begin{proof}Without loss of generality, we may assume that $x$ is an interior point of $R$ and $\min_{z\in R} |\nabla_0 P(z)|=\va>0$.  By Lemma \ref{cor-4-2}, there exists a continuously differentiable function $y: [0,\infty)\to \sph$ satisfying the equation \eqref{4-7}.
We claim that there exists $s>0$ such that $y(s)\notin R$. Assuming otherwise, we then have that
$$ y'(s) =\df{\nabla_0 P(y(s))}{\|\nabla_0 P(y(s))\|},\   \   \  s\ge 0,$$
and hence,  for any $t>0$,
\begin{align*}
    P(y(t))-P(x)=\int_0^t \nabla_0 P(y(s))\cdot y'(s)\, ds =\int_0^t \|\nabla_0 P(y(s))\|\, ds \ge \va t.
\end{align*}
Letting $t\to \infty$, and taking into account the fact that $y(t)\in\sph$,  we conclude  that
the polynomial $P$ is not bounded on $\sph$, which contradicts the extreme value theorem. This proves the claim.

Now set
$$t_0=\sup\{ t\ge 0:\  \ y(s)\in R\   \ \forall s\in [0, t]\}.$$
 Since $y: [0,\infty)\to \sph$ is continuous and $x$ is an interior point of $R$,  it follows that  $0<t_0<\infty$,  $y(t_0)\in \p(R)\subset R$, and
 $$\dist(x, y(t_0))\leq \int_0^{t_0} \|y'(s)\|\, ds =t_0.$$
 This implies that
 \begin{align*}
    P(z_{\max})-P(x)&\ge P(y(t_0))-P(y(0)) =\int_0^{t_0} \|\nabla_0 P(y(s))\|\, ds \\
   & \ge (\min_{z\in R} |\nabla_0 P(z)|) t_0 \ge (\min_{z\in R} |\nabla_0 P(z)|)\dist(x, \p(R)).
 \end{align*}
\end{proof}

We will use the following lemma from algebraic topology.

\begin{lem}\cite[Ths. 1.2.6 and 1.2.9]{topology}\label{topology} Let $V$ be a finite dimensional real Hilbert space with inner product $\la\cdot,\cdot\ra$, and $\Omega$  a bounded open subset of $V$  which contains the zero vector in $V$.
 If $f: V\to V$  is a continuous mapping on the space $V$  with the property  $\la f(x),x\ra>0$ for all $x\in \p\Omega$, then there exists a vector  $x_0\in \Omega$ such that
$f(x_0)=0$.
\end{lem}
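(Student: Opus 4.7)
The plan is to deduce the lemma from standard Brouwer degree theory (which is exactly the setting of Theorems 1.2.6 and 1.2.9 in the cited reference). The argument proceeds by contradiction combined with a straight-line homotopy to the identity.

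First, I would assume for contradiction that $f(x)\neq 0$ for every $x\in \overline{\Og}$. In particular $f$ has no zero on $\partial \Og$, so the Brouwer degree $\deg(f,\Og,0)$ is well-defined. Note also that $0\in \Og$ and $\Og$ is open, so $0\notin \partial\Og$; hence $\|x\|>0$ for every $x\in \partial\Og$.

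Next, I would introduce the affine homotopy $H:\overline{\Og}\times [0,1]\to V$ by
\[
H(x,t)=t\,x+(1-t)\,f(x).
\]
For $x\in \partial\Og$ and $t\in [0,1]$, I compute
\[
\la H(x,t),x\ra = t\,\|x\|^2+(1-t)\,\la f(x),x\ra >0,
\]
since both summands are nonnegative and at least one is strictly positive (the first when $t>0$ because $x\neq 0$ on $\partial\Og$, the second when $t<1$ by hypothesis). In particular $H(x,t)\neq 0$ on $\partial\Og\times[0,1]$, so $H$ is an admissible homotopy between $f=H(\cdot,0)$ and $\mathrm{id}=H(\cdot,1)$ relative to the value $0$. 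By homotopy invariance of the Brouwer degree (Theorem 1.2.9 type statement),
\[
\deg(f,\Og,0)=\deg(\mathrm{id},\Og,0)=1,
\]
where the last equality uses $0\in \Og$. Since a continuous map of nonzero degree at $0$ must attain $0$ on $\Og$ (Theorem 1.2.6 type statement), this contradicts our standing assumption, yielding an $x_0\in \Og$ with $f(x_0)=0$.

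The argument is essentially routine once Brouwer degree is available; the only conceptual step is producing the homotopy to the identity, and the positivity condition $\la f(x),x\ra>0$ is tailored precisely so that the radial homotopy above avoids zero on $\partial\Og$. If one prefers a proof avoiding degree theory, the same conclusion can be reached by covering $\overline{\Og}$ with a closed ball $\overline{B}_R\supset\overline{\Og}$ and applying Brouwer's fixed-point theorem to a suitably constructed retraction-type map; the main technical obstacle in that route is extending $f$ past $\partial\Og$ in a way that preserves the boundary inequality, which is precisely what the homotopy $H$ accomplishes abstractly in the degree-theoretic approach above.
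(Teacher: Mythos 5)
Your argument is correct and is precisely the intended one: the paper does not prove this lemma but cites it to Theorems 1.2.6 and 1.2.9 of the degree-theory reference, which are exactly the existence (solution) property and the homotopy invariance of the Brouwer degree that you invoke, with the radial homotopy $H(x,t)=tx+(1-t)f(x)$ kept away from zero on $\partial\Omega$ by the boundary condition $\la f(x),x\ra>0$. No gaps.
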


For convenience, we introduce the following definition:
\begin{defn}\label{def4-5:2017}For  $x, y\in \sph$ with  $\t:=\dist(x,y)\in (0, \f \pi2)$,  let $\ga_{[x,y]}: [0,1]\to \Arc (x,y)$ denote the   parametric representation of $\Arc(x,y)$given by
  \begin{equation}\label{3-1-0}
    \ga_{[x,y]}(t) =x\cos (\t t) +\xi \sin (\t t)
\end{equation}
 where $\xi\in\sph$ is such that $\xi\sin\t$ is the orthogonal projection of $y$ on the space $\{ y\in\RR^d:\ \ y\cdot x=0\}$; that is,
$\xi=\xi_{x,y}=\f {y-x\cos \t}{\sin \t}  $ .
In the case when $x=y$, we also set $\ga_{[x,y]}(t)=x$ for $t\in [0,1]$.
  \end{defn}

  A convex set $G$ in $\RR^d$ is said to be strictly convex if for any two distinct points $p, q\in G$ and any $t\in (0,1)$, $tp+(1-t)q$ is an interior point of $G$.

\begin{lem}\label{prop-3-3} \cite{Bond2} Let $R$ be a closed geodesically convex subset of $\sph$ with $\max_{p,q\in R} \dist(p,q)<\pi/2$. Assume that  $x_0$ is  a given  interior point of $R$ and let  $T_0:=\{ y\in \RR^d:\  \ x_0\cdot y=0\}$. Then the following statements hold:
\begin{enumerate}[\rm (i)]
\item For each $y\in T_0\setminus \{0\}$, there exists a unique point $x_y\in R$ such that $ x_y\cdot y =\max_{z\in R} z\cdot y$. Furthermore,  $y\longmapsto x_y$ is a continuous mapping on the set  $T_0\setminus\{0\}$ with the property that the function  $t\mapsto y\cdot (\ga_{[x_y,w]}(t))$ is decreasing on $[0,1]$
for each given    $y\in T_0\setminus\{0\}$ and $w\in R\setminus\{ x_y\}$.
\item Let $z_0$ be an arbitrary interior point of $R$.  Given $\da, \va\in (0,1/2)$, define
                \begin{equation} \label{4-5:2017} A(y)=\begin{cases}
                z_0, &\  \ \text{if $y=0$;}\\
                \ga_{[z_0, x_y]}\Bl( (1-\da)\min\{ 1, \f {|y|}\va\}\Br),&\  \  \text{if $y\in T_0\setminus \{0\}$}.\end{cases}
                \end{equation}
                Then $y\mapsto A(y)$ is a continuous mapping from  $T_0$ to $R$.
    \end{enumerate}
\end{lem}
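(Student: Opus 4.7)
My plan for (i) is to start by parametrizing any geodesic arc $\gamma_{[p,q]}$ with $\dist(p,q)=\theta<\pi/2$ lying in $R$ and observing that the restriction of the linear functional $z\mapsto z\cdot y$ to this arc has the form $a\cos(\theta t)+b\sin(\theta t)$ with $a=p\cdot y$ and $b=\xi_{p,q}\cdot y$. The maximum of $z\cdot y$ over the compact set $R$ is attained (existence). For uniqueness, if $x_y$ and $x_y'$ were two distinct maximisers with common value $M=x_y\cdot y=x_y'\cdot y$, the geodesic arc between them lies in $R$ and a direct calculation gives
\[
y\cdot\gamma_{[x_y,x_y']}(t)=\frac{M\cos(\theta(t-1/2))}{\cos(\theta/2)}
\]
with $\theta<\pi/2$, so its midpoint value is $M/\cos(\theta/2)>M$ as soon as $M>0$. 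The point $x_0\in R$ lies in $T_0$ (so the max is $\ge 0$) and, being an interior point of $R$, any direction $y\in T_0\setminus\{0\}$ can be approached inside $R$ from $x_0$, giving a strictly positive value; hence $M>0$, and the contradiction forces uniqueness. Continuity of $y\mapsto x_y$ then follows from a standard argmax argument: if $y_n\to y$ then any subsequential limit of $x_{y_n}\in R$ must also maximise $z\cdot y$, hence equals $x_y$.

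For the monotonicity, write $g(t):=y\cdot\gamma_{[x_y,w]}(t)=a\cos(\theta t)+b\sin(\theta t)$ with $a=y\cdot x_y>0$, $\theta=\dist(x_y,w)<\pi/2$ and $b=y\cdot\xi_{x_y,w}$. Since $x_y$ is the unique maximiser and the arc lies in $R$, we have $g(t)\le g(0)=a$, and differentiating at $t=0$ yields $g'(0)=\theta b\le 0$, so $b\le 0$. Rewriting $g'(t)=\theta\sqrt{a^2+b^2}\cos(\theta t+\beta)$ with $\sin\beta=a/\sqrt{a^2+b^2}>0$ and $\cos\beta=b/\sqrt{a^2+b^2}\le 0$ forces $\beta\in[\pi/2,\pi)$. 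Then for $t\in[0,1]$ we have $\theta t+\beta\in[\beta,\theta+\beta]\subset[\pi/2,\,3\pi/2)$ because $\theta<\pi/2$, so $\cos(\theta t+\beta)\le 0$ throughout, giving $g'\le 0$ on $[0,1]$ with equality only at isolated points; hence $g$ is strictly decreasing.

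For (ii), continuity of $A$ on $T_0\setminus\{0\}$ follows by composing the three continuous ingredients already at hand: the argmax map $y\mapsto x_y$ (part (i)), the scalar $y\mapsto(1-\da)\min\{1,|y|/\va\}$, and the map $(x,s)\mapsto\gamma_{[z_0,x]}(s)$, which is continuous on $R\times[0,1]$ by the explicit formula in Definition~\ref{def4-5:2017} (with the convention $\gamma_{[z_0,z_0]}\equiv z_0$). The only delicate point is continuity at $y=0$. For $|y|\le\va$ we have $A(y)=\gamma_{[z_0,x_y]}(s_y)$ with $s_y=(1-\da)|y|/\va\to 0$ as $y\to 0$, and from the explicit form
\[
\gamma_{[z_0,x_y]}(s_y)=z_0\cos(\theta_y s_y)+\xi_y\sin(\theta_y s_y),\qquad \theta_y=\dist(z_0,x_y)<\tfrac{\pi}{2},
\]
together with $\|\xi_y\|=1$ and the uniform bound $\theta_y\le\diam(R)<\pi/2$, we conclude $\|A(y)-z_0\|\le\theta_y s_y+(\theta_y s_y)^2/2\to 0$ as $y\to 0$, uniformly in the direction of $y$.

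I expect the main obstacle to be the monotonicity statement in (i): uniqueness and continuity of $x_y$ and the continuity of $A$ are essentially formal once the diameter bound $<\pi/2$ is exploited, but the monotonicity requires the precise sinusoidal identity above and the observation that $\theta<\pi/2$ is exactly what keeps $\theta t+\beta$ inside $[\pi/2,3\pi/2)$ where $\cos$ is nonpositive. Without this bound the strict monotonicity could fail, so the hypothesis $\max_{p,q\in R}\dist(p,q)<\pi/2$ is used essentially here as well as in the uniqueness step.
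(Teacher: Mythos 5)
Your proof is correct, but it takes a genuinely different (and in places more elementary) route through part (i) than the paper does. For uniqueness the paper projects $R$ onto the tangent plane $T_0$ via $P_0(z)=z-(z\cdot x_0)x_0$ and proves that $D_0=P_0(R)$ is a \emph{strictly} convex planar set (using strict convexity of $\|x\|^2$ and concavity of $s\mapsto\sqrt{s}$), then reads off existence, uniqueness and continuity of $x_y$ from properties of linear functionals on strictly convex compact sets, the continuity step being a quantitative separating-hyperplane estimate. Your midpoint identity $y\cdot\ga_{[x_y,x_y']}(t)=M\cos(\t(t-\tfrac12))/\cos(\t/2)$ reaches uniqueness directly on the sphere using only $M>0$ (which you correctly extract from $x_0$ being interior and $x_0\cdot y=0$) and $\t<\pi/2$, and your compactness/subsequence argmax argument for continuity is shorter and entirely sufficient, since no modulus of continuity is needed downstream. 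For the monotonicity the paper projects $y$ onto the plane of the great circle through $x_y$ and $w$ and argues geometrically that the arc lies between $\pm\xi$; your computation $g(t)=a\cos(\t t)+b\sin(\t t)$ with $a>0$ and $b\le0$ (the latter from first-order optimality at $t=0$, which legitimately uses that the whole arc stays in $R$ by geodesic convexity) gives $g'(t)=\t\bl(-a\sin(\t t)+b\cos(\t t)\br)\le0$ at once, both summands being nonpositive for $\t t\in[0,\pi/2)$ — the phase-shift reformulation is not even needed. Part (ii) is handled essentially as in the paper. Two cosmetic points: $x_0$ does not ``lie in $T_0$'' (rather $x_0\cdot y=0$ for $y\in T_0$, which is what you actually use), and the claimed \emph{strict} decrease fails at $t=0$ when $b=0$; only $g'\le0$ is asserted in the lemma and used later.
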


Lemma \ref{prop-3-3}  was essentially  proved in \cite{Bond2}. However,
since the proof there  is rather sketchy, we  include a more  detailed proof of the lemma here.
\begin{proof} (i)
  Let $P_0(z)=z-(z\cdot x_0) x_0$ denote the orthogonal projection of  $z\in\sph$ onto the space $T_0$. Firstly, we  prove  that $D_0=P_0(R)$ is a strictly convex set in the space $T_0$.
To see this, let $S_{0}^{+}=\{ y\in\sph:\  \ y\cdot x_0>0\}$ and  $U_0:=\{y\in T_0:\  \ \|y\|<1\}$.   Clearly,  $R\subset S_{0}^{+}$, $D_0\subset U_0$, and  $P_0$ is a continuous mapping   from $S_0^{+}$ onto the set $U_0$ with  continuous inverse given by
 $P^{-1}(u)=u+\sqrt{1-\|u\|^2} x_0$ for $u\in U_0$.  Thus, to show $D_0$ is strictly convex, it suffices to prove that for any  two distinct points $u, v \in D_0$, and every $w=t u+(1-t)v$ with  $t\in (0,1)$, $P_0^{-1}(w)$ is an interior point of $R$.
Note first  that
\begin{equation}\label{4-4:2017}
    P_0^{-1} (w)=t P_0^{-1}(u)+(1-t) P_0^{-1}(v) +\a_0 x_0\in S_0^{+}
\end{equation}
with $\a_0=\sqrt{1-\|w\|^2} -t\sqrt{1-\|u\|^2}-(1-t)\sqrt{1-\|v\|^2}$.
We claim  that $\a_0>0$.  Indeed, since $g(x)=\|x\|^2$ is a strictly convex function on $\RR^d$, we have $\|w\|^2 < t\|u\|^2+(1-t)\|v\|^2$. Since   $\vi(s)=\sqrt{s}$ is a  concave function on $[0,\infty)$, this implies that
\begin{align*}
    \sqrt{1-\|w\|^2}&> \sqrt{1-t\|u\|^2-(1-t) \|v\|^2} =\vi\Bl(t(1-\|u\|^2)+(1-t)(1-\|v\|^2)\Br)\\
    &\ge t\vi(1-\|u\|^2)+(1-t)\vi(1-\|v\|^2),
\end{align*}
which in turn implies that $\al_0>0$.
Now setting
 $$p:=\f {t P_0^{-1}(u)+(1-t) P_0^{-1}(v) }{\|t P_0^{-1}(u)+(1-t) P_0^{-1}(v)\|},$$
  we have $p\in \Arc (P_0^{-1}(u), P_0^{-1}(v))\subset R$.   \eqref{4-4:2017} then implies that
 $P_0^{-1}(w)\in\Arc (p,x_0)\subset R$.
 To show that  $P_0^{-1}(w)$ is in fact  an interior point of $R$,  let $\da\in (0,1)$ be such that $\{ \f {x_0+\eta}{\|x_0+\eta\|}:\  \ \|\eta\|<\da\}\subset  R$.  Then for    $z=P_0^{-1}(w) +\al_0 \eta\in \sph$  with  $\eta\in\RR^d$ satisfying $\|\eta\|<\da$, we use \eqref{4-4:2017} to obtain
 \begin{align*}
    z=t P_0^{-1}(u)+(1-t) P_0^{-1}(v) +\al_0 (x_0+\eta)\in\Arc\Bl(p, \f {x_0+\eta}{\|x_0+\eta\|}\Br)\subset R.
 \end{align*}
 This shows that $P_0^{-1}(w)$ is an interior point of $R$, and hence proves that $D_0$ is a strictly convex subset of $T_0$.

Secondly, we show that for each $y\in T_0\setminus\{0\}$, there exists a unique $x_y \in R$ such that
$x_y \cdot y=\max_{z\in R} z\cdot y$. Indeed, this follows directly from the facts that $D_0$ is strictly convex and $\max_{z\in R} z\cdot y=\max_{z\in R} P_0(z)\cdot y$.

Thirdly, we show that $y\mapsto x_y$ is continuous on $T_0\setminus\{0\}$.
Let $y, z$ be two distinct nonzero vectors in $T_0$, and let $p=P_0(x_y)$ and $q=P_0 (x_z)$.
Then $p, q\in D_0$,
$p\cdot y =x_y\cdot y=\max_{u\in D_0} u\cdot y$,
 and $q\cdot z =x_z\cdot z=\max_{u\in D_0} u\cdot z$.
  Since $\max_{x, y\in R} \dist(x, y)<\f \pi 2$, we also have that $\|x_y-x_z\|\leq C_R \|p-q\|$. For convenience, we set  $H_{p, y}:=\{ u\in T_0:\  \ u\cdot y=p\cdot y\}$,  $H_{p, y}^-:=\{ u\in T_0:\  \ u\cdot y\leq p\cdot y\}$ and $H_{p, z}^+:=\{ u\in T_0:\  \ u\cdot z\ge p\cdot z\}$.
Clearly,  $q\in H_{p, z}^{+}\cap D_0$ and $D_0\subset H_{p, y}^{-}$. Note also that
$$\rho(\f {p+q}2, H_{p,y}):=\min_{u\in H_{p,y}} \|\f {p+q}2-u\|=\f 1{2\|y\|} (p-q)\cdot y.$$
Since $p\cdot z\leq q\cdot z$, it follows that
\begin{equation}\label{4-7:2017}
 \rho(\f {p+q}2, H_{p,y})\leq \f 1{2\|y\|} (p-q)\cdot (y-z)\leq \f 1{\|y\|} \|y-z\|.
\end{equation}
On the other hand, since $D_0$ is strictly convex,
$\rho(\f {p+u} 2, H_{p,y})>0$ for any $u\in D_0\setminus\{p\}$. Thus, given any $\va>0$,
$$\min\Bl\{ \rho(\f {p+u} 2, H_{p,y}):\  \  u\in D_0,\  \ \|u-p\|\ge \va\Br\}=\da>0.$$
In particular,   if $\|x_y-x_z\|\ge \va/C_R$, then  $\|p-q\|\ge \va$ and hence  by \eqref{4-7:2017},  $ 0<\da\|y\| \leq  \|y-z\|$. This shows the  continuity of the mapping $y\mapsto x_y$.

Finally, we show that  given  each   $y\in T_0\setminus\{0\}$ and $w\in R\setminus\{ x_y\}$,  the function  $t\mapsto y\cdot (\ga_{[x_y,w]}(t))$ is decreasing on $[0,1]$.  Let $G$ denote the great circle passing through $x_y$ and $w$, and  $\eta$  the orthogonal projection of $y$ onto the plane spanned by the vectors $x_y$ and $w$. Let $\xi=\f {\eta}{\|\eta\|}\in G$. Then
$$ y\cdot (\ga_{[x_y,w]}(t))=\|\eta\| \xi\cdot (\ga_{[x_y,w]}(t)),\   \  \forall t\in [0,1].$$
 Since $w\cdot \xi\leq x_y\cdot \xi$, it suffices to show that  the arc $\Arc(x_y,  w)$ lies between the  points $\xi$ and $-\xi$ on the great circle $G$.
 Indeed, since $\max_{u,v\in R}\dist(u,v)<\f \pi2$, we have $x_y\cdot \xi =\|\eta\|^{-1} x_y\cdot y>0$.  Hence, $\xi$ cannot be in the interior of the geodesic arc $\Arc(w, x_y)$ since otherwise $\xi\in R$ and $\|\eta\|=\xi\cdot \eta =\xi\cdot y >x_y\cdot y$. Similarly, one can also show that  that $-\xi$ can not lie in the interior of $\Arc(w, x_y)$. Indeed, assuming otherwise, we have that  $-\xi\in R$, which would  imply that $x_y \cdot y =\|\eta\|\xi\cdot x_y <0$, yielding a contradiction.

(ii)  Let  $\da_1 \in (0, \f12)$ be  such that  $B(z_0, \da_1)\subset R$. Since $x_y$ is on the boundary of $R$, it follows by (i) that $\ta(y)=\dist(x_y, z_0)\in [\da_1, \f \pi2)$ for each $y\in T_0\setminus\{0\}$, and is continuous in $y\in T_0\setminus\{0\}$. Furthermore, according to \eqref{3-1-0},
 for $y\in T_0\setminus\{0\}$,
 \begin{equation}\label{3-2}
    \ga_{[z_0, x_y]}(t) =z_0\cos \bl(t\t(y)\br) +\xi_y \sin \bl(t \t(y)\br),\   \  t\in [0,1],
 \end{equation}
 where
 $$ \xi_y := \f {x_y -z_0\cos \bl(\t(y)\br)}{\sin \bl(\t(y)\br)}.$$
 Since $0<\da_1\leq \t(y)<\f \pi2$, $y\mapsto \xi_y$ is a continuous mapping from $T_0\setminus\{0\}$ to $\sph$.
 Since the function  $h_\va(y):=\min\{1, \f{\|y\|}{\va}\}$ is continuous  on $T_0$,  it follows by   \eqref{3-2} that
 $ A(y)= \ga_{[z_0, x_y]}\Bl((1-\da)h_\va(y)\Br)$
 is continuous on $T_0\setminus\{0\}$.
 On the other hand, setting $t_y:=(1-\da) h_\va(y)\ta(y)$ for $y\in T_0\setminus \{0\}$, we have that
 $\lim_{\sub{y\to 0\\
 y\in T_0}}t_y= 0,$
 and hence
 \begin{align*}
    \lim_{\sub{y\to 0\\
    y\in T_0}} A(y) =\lim_{\sub{y\to 0\\
    y\in T_0}} z_0\cos (t_y) +\xi_y \sin (t_y)=z_0=A(0).
 \end{align*}
 This shows that $A(y)$ is continuous at $y=0$ as well.
\end{proof}

\section{Well-Separated Chebyshev-type cubature formulas}

This section is devote to the proofs of Theorem \ref{sphdesign201} and Corollary \ref{cor-1-2}.

Let $\Pi_{n,0,w}^d$ denote the set of all spherical polynomials $P$  of degree at most $n$ on $\sph$ with $\int_{\sph}P(x) w(x)\, d\s(x)=0$. Then   $\Pi_{n,0,w}^d$ is a finite dimensional real  Hilbert space equipped  with the inner product of the space $L^2(\sph; w(x)d\s(x))$.
 Let $G_{n,w}(\cdot, \cdot)$ denote the reproducing kernel of the Hilbert space $\Pi_{n,0,w}^d$.
 Clearly,  \eqref{1-5:2017} is equivalent to the following
\begin{equation}\label{5-2:2017}
     \sum_{j=1}^N G_{n,w}(z_j, z)=0,\   \  \forall z\in\sph.
\end{equation}
  Finally,
we recall that  $\|f\|_{1,w}=\int_{\sph} |f(x)|w(x)\, d\s(x)$.

The proof of Theorem \ref{sphdesign201} follows closely the methods used in  \cite{Bond2}. Indeed,  applying  \lemref{topology} to the  vector valued function   $F(P; z): =\sum_{j=1}^N G_{n,w}( \mathbf{x}_j (P), z)$,  $P\in \Pi_{n,0,w}^d$, $z\in\sph$ and  the open subset  $\Og:=\{ P\in \Pi_{n,0,w}^d:\   \ \|\nabla_0 P\|_{1,w}<1\}$ of the Hilbert space  $\Pi_{n,0,w}^d$,  and taking into account \eqref{5-2:2017}, we  reduce to showing

\begin{prop}\label{prop-5-2}   For each integer  $N\ge K_{w} M_{n,w}$, there exists a set $\{\mathbf{x}_\al\}_{\al\in \Ld}$ of $N$ continuous functions $P\mapsto \mathbf{x}_\al(P)$ from the space $\Pi_{n,0,w}^d$ to $\sph$ such that $\mathbf{x}_\al (P)\neq \mathbf{x}_{\al'}(P)$ for  $P\in \Pi_{n,0,w}^d$, $\al, \al'\in\Ld$ and $\al\neq \al'$, and such that   \begin{equation}\label{sphdesign402}
     \sum_{\al\in \Ld}  P(\mathbf{x}_{\al}(P))>0 \   \   \ \text{whenever $P\in \Pi_{n,0,w}^d$  and    $\|\nabla_0 P\|_{1,w}=1$},
  \end{equation}
  where $\Ld$ is an index set with cardinality $N$.
  If, in addition, $w\in L^\infty(\sph)$, then  the set of points $\{\mathbf{x}_\al(P)\}_{\al\in\Ld}$    is $c_w N^{-\f 1{d-1}}$-separated for every $P\in \Pi_{n,0,w}^d$; that is,
  \begin{equation}\label{sphdesign401}
\min_{\al, \al'\in \Ld, \al\neq \al'}\dist(x_{\al}(P), x_{\al'}(P))\ge c_w N^{-\f 1{d-1}},\   \  \forall P\in \Pi_{n,0,w}^d
\end{equation}
   for some positive constant $c_w$ depending only on $\|w\|_\infty$ and the doubling constant of $w$.

 \end{prop}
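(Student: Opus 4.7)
The plan is to adapt the Bondarenko--Radchenko--Viazovska construction, with the twist that several nodes may have to share one convex cell of the partition. First I would apply \thmref{thm-partition} with radius $r=c_w n^{-1}$; for $c_w$ small enough and $N\ge K_wM_{n,w}$ (with $M_{n,w}:=\max_{x\in\sph}w(B(x,n^{-1}))^{-1}$), the doubling condition \eqref{1-5-2017} yields $\min_{x\in\sph}w(B(x,r))\ge 1/N$, so the theorem produces a convex partition $\sph=\bigcup_{j=1}^M R_j$ with $B(x_j,c_1r)\subset R_j\subset B(x_j,c_2 r)$ and $k_j:=Nw(R_j)\in\NN$, $\sum_j k_j=N$. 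Following the derivation of \corref{cor-1-4}, I would subdivide each $R_j$ into measurable pieces $R_{j,1},\dots,R_{j,k_j}$ of weight $1/N$, pick a central point $z_{j,i}\in R_{j,i}$ interior to $R_j$, and set $\Lambda=\{(j,i)\}$, so $|\Lambda|=N$. In the $L^\infty$ case, since $w\le\|w\|_\infty$ forces $\sigma_d(R_{j,i})\ge c_w/N$, the $z_{j,i}$ can be placed so that $\dist(z_{j,i},z_{j',i'})\ge c_wN^{-1/(d-1)}$ for all $(j,i)\ne(j',i')$.

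Next I would use \lemref{prop-3-3} to define the continuous maps cell by cell. Fix $j$ and apply \lemref{prop-3-3} to the convex region $R=R_j$ with tangent-plane reference $x_0=x_j$ and starting point $z_0=z_{j,i}$. For $P\in\Pi_{n,0,w}^d$, let $y_{j,i}(P)$ be a suitable ``$P$-gradient'' on $R_j$ projected onto $T_0=\{y:y\cdot x_j=0\}$ (e.g.\ $\nabla_0 P(x_j)$, which already lies in $T_0$), and set
\[
\mathbf{x}_{j,i}(P):=A_{j,i}\bigl(y_{j,i}(P)\bigr),
\]
where $A_{j,i}$ is the mapping \eqref{4-5:2017} with a universal parameter $\varepsilon=\varepsilon_w$ and parameters $\delta_{j,i}\in(0,1/2)$ chosen to be mutually distinct across $i=1,\dots,k_j$. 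Continuity of $\mathbf{x}_{j,i}$ in $P$ follows from part~(ii) of \lemref{prop-3-3}. For distinctness, all $k_j$ trajectories $t\mapsto\gamma_{[z_{j,i},x_y]}(t)$ begin at the distinct centers $z_{j,i}$ and converge along geodesics toward the common boundary maximizer $x_y\in\p R_j$ of $z\mapsto z\cdot y$, terminating at the parameter $1-\delta_{j,i}$; staggering the $\delta_{j,i}$ puts the endpoints on distinct spheres around $x_y$, and a direct computation using the monotonicity built into \lemref{prop-3-3}(i) shows they remain mutually separated for every $y$.

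The positivity \eqref{sphdesign402} will come from the splitting
\[
\sum_{(j,i)\in\Lambda}P(\mathbf{x}_{j,i}(P))=\underbrace{\sum_{(j,i)}\bigl[P(\mathbf{x}_{j,i}(P))-P(z_{j,i})\bigr]}_{I(P)}+\underbrace{\sum_{(j,i)}P(z_{j,i})}_{II(P)}.
\]
The quadrature sum $II(P)$ approximates $N\int_{\sph}Pw\,d\sigma=0$ with cells of weight $1/N$ and diameter $\sim n^{-1}$, and a Marcinkiewicz--Zygmund estimate based on \lemref{sphdesign301} yields $|II(P)|\le C_wNn^{-1}\|\nabla_0 P\|_{1,w}$. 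For $I(P)$, the crucial input is \corref{cor-4-3} applied to the convex cell $R_j$: whenever $\|y_{j,i}(P)\|\ge\varepsilon_w$, the point $\mathbf{x}_{j,i}(P)$ sits at distance $\gtrsim n^{-1}$ from $\p R_j$ and hence
\[
P(\mathbf{x}_{j,i}(P))-P(z_{j,i})\ge c_wn^{-1}\min_{z\in R_j}\|\nabla_0 P(z)\|.
\]
Summing over the $k_j$ nodes in $R_j$ and then over $j$, and using $k_j=Nw(R_j)$, gives $I(P)\ge c_wNn^{-1}\sum_jw(R_j)\min_{z\in R_j}\|\nabla_0 P(z)\|$, which by \lemref{sphdesign301}(ii) applied to $\|\nabla_0 P\|^2\in\Pi_{2n}^d$ (combined with a Cauchy--Schwarz step to pass from $L^2$ to $L^1$) is $\ge c_wNn^{-1}\|\nabla_0 P\|_{1,w}$. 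Taking $K_w$ large enough forces $I(P)>|II(P)|$, giving \eqref{sphdesign402}.

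The main obstacle is the simultaneous requirement of \textbf{(a)} each node traversing a distance of order $n^{-1}$ inside its convex cell $R_j$, which is what provides the gain in $I(P)$, and \textbf{(b)} the $k_j$ nodes of the same cell remaining pairwise distinct---and in the $L^\infty$ case separated by $\ge c_wN^{-1/(d-1)}\ll n^{-1}$---for \emph{every} $P\in\Pi_{n,0,w}^d$. The staggered choice of $\delta_{j,i}$ in \eqref{4-5:2017} resolves this, but verifying the uniform separation estimate under $P$-perturbations, and tracking the constants $c_w,C_w,K_w$ so that $I(P)$ always dominates $|II(P)|$, will be the technically delicate step that is absent from the unweighted BRV argument.
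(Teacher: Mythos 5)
Your overall architecture (convex partition from \thmref{thm-partition}, $k_j$ nodes per cell, gradient-flow maps via \lemref{prop-3-3}, topological degree) matches the paper, but the positivity argument has a genuine gap. In your splitting $\sum P(\mathbf{x}_{j,i}(P))=I(P)+II(P)$, both terms are of the same order $N\f{\da}{n}\|\nabla_0P\|_{1,w}$ and nothing separates their constants: the Marcinkiewicz--Zygmund error in $II(P)$ per node is $|P(z_{j,i})-P(x)|\lesssim \diam(R_j)\max_{z\in R_j}\|\nabla_0P(z)\|$, which involves the \emph{full} gradient rather than its oscillation, so its constant $C_w$ cannot be made small, and $K_w$ enters neither constant (enlarging $K_w$ shrinks $\da$, but $I$ and $II$ both scale linearly in $\da$). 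Worse, the per-node lower bound $P(\mathbf{x}_{j,i}(P))-P(z_{j,i})\ge c_w n^{-1}\min_{R_j}\|\nabla_0P\|$ is false: if $z_{j,i}$ already lies at the point of $R_j$ maximizing $z\mapsto z\cdot\nabla_0P(x_j)$, the flow does not move it and the gain is zero; \corref{cor-4-3} bounds $P(z_{j,\max})-P(x)$ from below, not the increment of $P$ along your trajectory. The paper's proof is structured precisely to avoid both problems: the reference value in each cell is $k_jP(z_{j,\max})$, so that $\Sigma_1=\sum_j\int_{R_j}[P(z_{j,\max})-P(x)]\,w\,d\s$ is a sum of \emph{nonnegative} integrands bounded below by $c_w\f{\da}{n}\|\nabla_0P\|_{1,w}$ (integrate \corref{cor-4-3} over the inner ball $B(x_j,c_d\da/(2n))$ and use doubling), while the losses from $z_{j,\max}$ down to $\mathbf{x}_{j,i}(P)$ are routed through the maximizer $z_{j,P}$ of the linearization $z\mapsto z\cdot\nabla_0P(x_j)$, so that they are controlled by the oscillation $\max_{z\in R_j}\|\nabla_0P(z)-\nabla_0P(x_j)\|$ (\lemref{sphdesign301}(i)) and by path segments of length $O(\da^2/n)$; every error term then carries an extra factor $\da$ or $\va$ relative to the main term, and that extra small factor is exactly what your decomposition lacks.

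The separation argument is also not established. Staggering the parameters $\da_{j,i}$ puts the endpoints at geodesic distance $\da_{j,i}\dist(z_{j,i},x_y)$ from the common target $x_y$; since the second factor varies with $i$ and with $P$, the radii need not be distinct, and even if they were, distinctness of radii gives no quantitative bound $\ge c_wN^{-\f1{d-1}}$ uniform in $P$. The paper instead uses a single parameter $\da$ and a single target $z_{j,P}$ per cell, starts the flows at $2r_j$-separated points $x_{j,i}$ with $B(x_{j,i},r_j)\subset R_j$ and $r_j=\f{\da^2}{n}(Nw(R_j))^{-\f1{d-1}}\gtrsim N^{-\f1{d-1}}$ when $w\in L^\infty$, and proves a quantitative comparison (Lemma 5.3): two geodesics issuing from the common point $z_{j,P}$, evaluated at the same parameter $\al\ge\da$ measured from $z_{j,P}$, satisfy $\|\ga_1(\al)-\ga_2(\al)\|\gtrsim\al\,\|x_{j,i}-x_{j,i'}\|$. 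Nodes in different cells are then separated by showing $\mathbf{x}_{j,i}(P)$ stays at distance $\ge\da\sin(r_j)$ from $\p R_j$, via the strict convexity of the projected cell and concavity of the distance to the boundary. You would need analogues of both of these quantitative steps; the monotonicity in \lemref{prop-3-3}(i) alone does not supply them.
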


\subsection{Proof of Proposition \ref{prop-5-2}}
Throughout the proof, $K_w$ denotes a sufficiently large constant depending only on the doubling constant of $w$.  Set $\da=2^{\f1{s_w}}K_w^{-\f 1{s_w}}\in (0,1)$.
By \eqref{1-5-2017}, for any $x\in\sph$,
\begin{equation}\label{}
  w(B(x, \f \da n)) \ge \f 12 \da^{s_w} w(B(x, \f 1n))\ge \f{ \da^{s_w} }{2M_{n,w}}\ge   \f {\da^{s_w} K_{w}}{2N}\ge \f 1N.
\end{equation}

 Let $N, n\in \NN$ be such that   $N^{-\f 1{s_w}}=\f\da n$, where $\da\in (0,1)$ is a small constant depending only on the doubling weight of $w$. According to  Theorem \ref{thm-partition}, there exists  a   convex partition
 $\mathcal{R}=\{R_1, \cdots, R_M\}$ of $\sph$  with the properties that  for each $1\leq j\leq M$,  there exist a positive integer $k_j$ and a point $x_j\in R_j$ such that $w(R_j)=\f {k_j}N$ and $ B(x_j, \f{c_d \da}n )\subset R_j \subset B(x_j, \f {c_d'\da} n)$.

For each $1\leq j\leq M$, set
$r_j = \f {\da^2} n  \bl(N w(R_j))\br)^{-\f 1{d-1}}.$
 Since $k_j r_j^{d-1}=\da^{d-1} \Bl(\f \da n\Br)^{d-1}$,
there exists a   set of $k_j$  points
$x_{j, 1}, \cdots, x_{j, k_j}$ in the set $ R_j$ which  are $2r_j$-separated and satisfy  $x_{j,1}=x_j$,   and $B(x_{j,i}, r_j)\subset R_j$ for all $1\leq i \leq k_j$.
If,  in addition,  $w\in L^\infty$, then
$$ w(R_j) \leq w(B(x_j, \f {c_d'\da} n))\leq  C_d\|w\|_\infty |B(x_j, \f \da n)|\leq  C_d\|w\|_\infty \Bl( \f \da n\Br)^{d-1},$$
which implies
\begin{equation}\label{5-6-2017}
   r_j \ge C_d \|w\|_\infty^{-\f 1{d-1}}\f {\da^2} n  N^{-\f 1{d-1}}  \Bl( \f \da n\Br)^{-1}=C_d \|w\|_\infty^{-\f 1{d-1}}\da N^{-\f 1{d-1}}.
\end{equation}

Let $P\in\Pi_{n,w,0}^d$.  It is easily seen that  $x\cdot \nabla_0 P(x)=0$ for all $x\in\sph$. Thus,
   according to \lemref{prop-3-3}, if $\nabla_0P(x_j)\neq 0$, there exists a unique point $z_{j,P}\in R_j$  such that   $ z_{j,P}\cdot\Bl(\nabla_0 P(x_j)\Br) =\max_{z\in R_j}  z\cdot \Bl(\nabla_0 P(x_j)\Br)$.
  Now for each $P\in\Pi_{n,0,w}^d$, we define
\begin{align}\label{sphdesign400}
    \mathbf{x}_{j,i}(P)&=\begin{cases} x_{j,i}&\  \ \text{if $\nabla_0 P(x_j)=0$;}\\
    \ga_{[x_{j,i}, z_{j, P}]}\Bl( (1-\da)\min\bl\{1,  \f{|\nabla_0 P(x_j)|}\va\br\}\Br),&\  \ \text{if $\nabla_0 P(x_j)\neq 0$.}
    \end{cases}
\end{align}
We claim that for each $1\leq j\leq M$ and $1\leq i\leq k_j$, $P\mapsto \mathbf{x}_{j,i}(P)$ is a continuous function from $\Pi_{n,w,0}^d$ to $\sph$.  Indeed, by \lemref{prop-3-3}, we may write  $\mathbf{x}_{j,i} (P)= A(\nabla_0 P(x_j))$, where $A$ is defined in \eqref{4-5:2017} with $x_j, x_{j,i}, \nabla_0 P(x_j)$ in place of $x_0, z_0$ and $y$ respectively.  According to \lemref{prop-3-3} (ii),   $\mathbf{x}_{j,i}(P)$ is a continuous function of $\nabla_0 P(x_j)$. On the other hand, however, since $\Pi_{n,w,0}^d$ is a finite dimensional vector space, the mapping $P\mapsto \nabla_0 (P)(x_j)$ is continuous on $\Pi_{n,0,w}^d$.  This proves the claim.

Now we set  $\Ld:=\{ (j,i):\  \ 1\leq j\leq M,\   \  1\leq i\leq k_j\}$, and    turn to the proof of \eqref{sphdesign402}.
  Assume that   $P\in \Pi_{n,0,w}^d$ and   $\|\nabla_0 P\|_{1,w}=1$.  Let $z_{j,P}$, $x_{j,i}$  be  defined as above.
  For convenience, we also set
  $$ z_{j,i,P} :=\begin{cases} z_{j,P}, &\  \ \text{if $\nabla_0 P(x_j)\neq 0$}\\
  x_{j,i}, &\  \ \text{if $\nabla_0 P(x_j)=0$}\end{cases}\   \   \text{for  $1\leq i\leq k_j$ and $1\leq j\leq M$}.$$
 Let $\delta\in (0,1)$ be a parameter to be specified later. Define
 $$ y_{j,i} :=\begin{cases} \g_{[x_{j,i},z_{j,P}]}(1-\delta), &\  \ \text{if $\nabla_0 P(x_j)\neq 0$}\\
  x_{j,i}, &\  \ \text{if $\nabla_0 P(x_j)=0$},\end{cases}\   \ \text{for  $1\leq i\leq k_j$ and $1\leq j\leq M$}.$$
 Let $z_{j,\max}\in R_j$ be such that $P(z_{j, \max})=\max_{z\in R_j} P(z)$.
We then  split the sum $\f1N \sum_{j=1}^M \sum_{i=1}^{k_j} P\bl(\mathbf{x}_{j,i}(P)\br)$ on  the left hand side of \eqref{sphdesign402} into the following  four parts:
\begin{align}
    &\f1N \sum_{j=1}^M k_j P\bl(z_{j,\max}\br)-\f1N \sum_{j=1}^M\sum_{i=1}^{k_j}
    \Bl[P\bl(z_{j,\max}\br)-P\bl(z_{j,i,P}\br) \Br]\notag\\
    &-\f1N \sum_{j=1}^M\sum_{i=1}^{k_j}
    \Bl[P\bl(z_{j,i,P}\br)-P\bl(y_{j,i}\br) \Br]-\f1N \sum_{j=1}^M \sum_{i=1}^{k_j} \Bl[P\bl(y_{j,i}\br)-P\bl(\mathbf{x_{j,i}}(P)\br) \Br]\notag\\
    &=\Sigma_1-\Sigma_2-\Sigma_3-\Sigma_4.\label{5-8-2017}
\end{align}

 Firstly, we estimate the first sum $\Sigma_1$ from below.   Use   \lemref{cor-4-3}, we  obtain
\begin{align}
  \Sigma_1&= \sum_{j=1}^M \int_{R_j} \Bl[P(z_{j, \max})-P(x)\Br]\, w(x)\, d\s(x)\notag\\
 &\ge  \sum_{j=1}^M \Bl(\min_{z\in R_j}|\nabla_0 P(z)|\Br) \int_{B(x_j, \f{c_d  \da}{2 n})}  \dist(x, \p R_j) \, w(x)\, d\s(x)\notag\\
 &\ge  C_w\f {\da} {n}   \sum_{j=1}^M w(R_j) \Bl(\min_{z\in R_j}|\nabla_0 P(z)|\Br) \ge c_w \f \da n \|\nabla_0 P\|_{1,w},\label{Sum1}
\end{align}
where we used  the fact that $B(x_j,\f{c_d \d}n)\subset R_j\subset B(x_j, \f {c_d'\d} n)$ and the doubling property of $w$ in the third step, and \lemref{sphdesign301}(ii) in the last step.

Secondly, we prove the following upper estimate of the second sum $\Sigma_2$:
\begin{equation}\label{5-9:2017}
   \Sigma_2:= \f 1N  \sum_{j=1}^M \sum_{i=1}^{k_j}\Bl[P(z_{j, \max})-P(z_{j,i, P})\Br]\leq \f{C_w\d^2}n.
\end{equation}
For simplicity, we write   $\ga_{j,i}(t)=\ga_{[z_{j,i, P}, z_{j,\max}]}(t)$.
 If  $\nabla_0 P(x_j)\neq 0$, then $z_{j,i,P}=z_{j,P}$, and hence,  by Lemma \ref{prop-3-3} (i), the function $\nabla_0 P(x_j)\cdot \ga_{j,i}(t)$ is decreasing on $[0,1]$, namely,
   \begin{equation}\label{0-2}\nabla_0 P(x_j) \cdot \ga_{j,i}'(t)\leq 0,\   \ \forall t\in [0,1].\end{equation}
 On the other hand, note that  if $\nabla_0 P(x_j)= 0$, then $z_{j,i,P}=x_{j,i}$ and then the  inequality \eqref{0-2}  holds trivially. Thus,
\begin{align*}
    &P(z_{j,\max})-P(z_{j,i, P})=\int_0^1 \nabla_0 P(\ga_{j,i}(t))\cdot \ga_{j,i}'(t)\, dt\notag \\
    &\leq
    \int_0^1 \Bl(\nabla_0 P(\ga_{j,i}(t))-\nabla_0 P(x_j)\Br)\cdot \ga_{j,i}'(t)\, dt
    \leq \dist(z_{j,\max}, z_{j,i, P}) \max_{z\in R_j} \|\nabla_0 P(z)-\nabla_0 P(x_j)\|\notag\\
    &\leq \f {c_d\da} n \max_{z\in R_j} \|\nabla_0 P(z)-\nabla_0 P(x_j)\|.
\end{align*}
It follows that
\begin{align*}
  \Sigma_2&\leq \f{c_d\da} n  \sum_{j=1}^M w(R_j) \max_{z\in R_j} \|\nabla_0 P(z)-\nabla_0 P(x_j)\|
  \leq  \f{C_w\da^2} n\|\nabla_0 P\|_{1,w} = \f{C_w\da^2} n,
  \end{align*}
  where the second step uses  \lemref{sphdesign301} (i). This proves the estimate \eqref{5-9:2017}.

Thirdly, we show the following upper estimate of $\Sigma_3$:
\begin{equation}\label{5-7}
   \Sigma_3:= \f 1N \sum_{j=1}^M \sum_{i=1}^{k_j}  \Bl|P(z_{j,i, P})-P(y_{j,i})\Br|\leq   \f{C_w\d^2}n,
\end{equation}
For simplicity, we set $\al_{j,i}(t)=\ga_{[x_{j,i}, z_{j,i, P}]}(t)$.
Recall that $y_{j,i}=\al_{j,i}(1-\da)$.
 We  have
\begin{align*}
    |P(z_{j,i, P})-P(y_{j,i})|&=\Bl|\int_{1-\da}^1 \nabla_0 P(\al_{j,i}(t))\cdot \al_{j,i}'(t)\, dt\Br|\\
    &\leq \da \dist(z_{j,i, P}, x_{j,i}) \max_{z\in R_j}\|\nabla_0 P(z)\|\leq \f {C_d\da^2} n \max_{z\in R_j}|\nabla_0 P(z)|.
\end{align*}
It follows  by \lemref{sphdesign301} that \begin{align*}
   \Sigma_3&\leq \f {C_d\da^2} n  \sum_{j=1}^M w(R_j)  \max_{z\in R_j}\|\nabla_0 P(z)\|
   \leq C_w \f {\da^2} n\|\nabla_0 P\|_{1,w}=\f{C_w\d^2}n.
\end{align*}
This proves \eqref{5-7}.

Fourthly, we estimate above  the last sum $\Sigma_4$. Fix temporarily $1\leq j\leq M$ and a small parameter $\va\in (0,1)$. We consider the following three cases. If $\|\nabla_0 P(x_j)\|=0$, then $x_{j,i}=\mathbf{x}_{j,i}(P)=z_{j,i,P}=y_{j,i}$, and hence,
$P(\mathbf{x}_{j,i}( P))-P(y_{j,i})=0$.
If $\|\nabla_0 P(x_j)\|\ge \va>0$, then
$$\mathbf{x}_{j,i}(P)=\al_{j,i} \Bl( (1-\da) \min\Bl\{ \f{\|\nabla_0 P(x_j)\|}\va, 1\Br\}\Br)=\al_{j,i}(1-\da)=y_{j,i},$$
and hence in this case we also have  $P(\mathbf{x}_{j,i}( P))-P(y_{j,i})=0$.
Finally, if $0<\|\nabla_0P(x_j)\|<\va$, then
\begin{align*}
   \Bl|P(\mathbf{x}_{j,i}( P))-P(y_{j,i})\Br|&\leq \Bl|\int_{(1-\da)\f {\|\nabla_0 P(x_j)\|}\va}^{1-\da} \nabla_0 P(\al_{j,i}(t))\cdot \al_{j,i}'(t)\, dt\Br|\\
   &\leq \int_0^1 \bl\|\nabla_0 P(\al_{j,i}(t))-\nabla_0 P(x_j)\br\|\| \al_{j,i}'(t)\|\, dt+ \va \int_0^1 \|\al_{j, i}'(t)\|\, dt\\
   &\leq \f {C_d\da} n \max_{y, z\in R_j}\|\nabla_0 P(y)-\nabla_0 P(z)\|+\f {C_d\va \da}n.
\end{align*}
Putting the above together, we obtain
\begin{align}
    \Sigma_4&\leq \f 1N \sum_{j:\ \|\nabla_0P(x_j)\|<\va} \sum_{i=1}^{k_j} \Bl|P(x_{j,i}( P))-P(y_{j,i})\Br|\notag\\
   &\leq \f {C_d\da} n  \sum_{j=1}^M w(R_j)\max_{y, z\in R_j}\|\nabla_0 P(y)-\nabla_0 P(z)\|+ \f {C_d\va \da}n\notag\\
   &\leq C_w \f {\da^2} n\|\nabla_0 P\|_{1,w} +\f {C_d\va \da}n.\label{5-12:2017}
\end{align}

 Finally,  combining the estimates \eqref{Sum1}, \eqref{5-9:2017}, \eqref{5-7}, \eqref{5-12:2017} with \eqref{5-8-2017}, we obtain
\begin{align*}
    \f1N \sum_{j=1}^M \sum_{i=1}^{k_j} P\bl(\mathbf{x}_{j,i}(P)\br)&=\Sigma_1-\Sigma_2-\Sigma_3-\Sigma_4\\
    &\ge \f {c_w \da}n -\f{C_w \da^2}{n} -\f{C_w \da \va}n=\f \da n \Bl( c_w -C_w \da -C_w  \va\Br).
\end{align*}
To conclude the proof of \eqref{sphdesign402},  we just need to choose the parameters $\va,\da$ small enough so that $0<\va, \da<\f {c_w}{4C_w}$.

 It remains to show  the separation property \eqref{sphdesign401} under  the additional condition $w\in L^\infty(\sph)$. To this end,
 we need the following simple lemma.
\begin{lem}\label{lem-5-3:2017} Let  $z, \xi_1, \xi_2\in\sph$ be such that   $\xi_1\cdot z=\xi_2\cdot z=0$.
 Given $\t_1, \t_2\in (0,\f \pi 4]$, define
 $$\ga_i(t) =z\cos (\t_i t) +\xi_i \sin (\t_i t)=\ga_{[z, \eta_i]}(t),\   \  i=1,2,$$
 where $\eta_i=z\cos\t_i +\xi_i \sin\t_i$.
  Then for any $t\in (0,1)$,
$$ \|\ga_1(t)-\ga_2(t)\|\sim t \|\ga_1(1)-\ga_2(1)\|=t\|\eta_1-\eta_2\|.$$
\end{lem}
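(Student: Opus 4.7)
My plan is to reduce the claim to a direct computation using the author's earlier Lemma \ref{lem-6-2}, which expresses $\sin^2(\dist(x,y)/2)$ for two points on the sphere parametrized as in the statement. The key observation is that for $x,y\in\sph$, one has the standard identity $\|x-y\|=2\sin(\dist(x,y)/2)$, so it suffices to prove the analogous equivalence $\sin(\dist(\ga_1(t),\ga_2(t))/2)\sim t\sin(\dist(\eta_1,\eta_2)/2)$, and then square both sides.

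Applying Lemma \ref{lem-6-2} to each pair, first to $\ga_1(t)=z\cos(\t_1 t)+\xi_1\sin(\t_1 t)$ and $\ga_2(t)=z\cos(\t_2 t)+\xi_2\sin(\t_2 t)$, and then to $\eta_1,\eta_2$, I obtain
\begin{align*}
\sin^2\tfrac{\dist(\ga_1(t),\ga_2(t))}{2}&=\sin^2\tfrac{(\t_1-\t_2)t}{2}+\sin(\t_1 t)\sin(\t_2 t)\sin^2\tfrac{\dist(\xi_1,\xi_2)}{2},\\
\sin^2\tfrac{\dist(\eta_1,\eta_2)}{2}&=\sin^2\tfrac{\t_1-\t_2}{2}+\sin\t_1\sin\t_2\sin^2\tfrac{\dist(\xi_1,\xi_2)}{2}.
\end{align*}

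Since $\t_1,\t_2\in(0,\pi/4]$ and $t\in(0,1)$, every angle that appears inside a sine lies in $[0,\pi/4]$, so the elementary bound $\tfrac{2}{\pi}u\le\sin u\le u$ for $u\in[0,\pi/4]$ gives $\sin(\t_i t)\sim \t_i t$, $\sin\t_i\sim\t_i$, and $\sin\tfrac{(\t_1-\t_2)t}{2}\sim\tfrac{|\t_1-\t_2|t}{2}$. Term by term this shows that both summands on the right-hand side of the first displayed identity are comparable to $t^2$ times the corresponding summand in the second identity, i.e.\
\[
\sin^2\tfrac{\dist(\ga_1(t),\ga_2(t))}{2}\sim t^2\sin^2\tfrac{\dist(\eta_1,\eta_2)}{2}.
\]
Taking square roots and invoking the chord-angle identity $\|x-y\|=2\sin(\dist(x,y)/2)$ twice yields $\|\ga_1(t)-\ga_2(t)\|\sim t\|\eta_1-\eta_2\|$, as required.

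There is no real obstacle here beyond bookkeeping: the whole point is that because all angles involved are bounded away from $\pi$, the sine function behaves linearly up to absolute constants, so the quadratic form on the right of Lemma \ref{lem-6-2} scales homogeneously by $t^2$ under the substitution $\t_i\mapsto\t_i t$ (while $\xi_1,\xi_2$, and hence $\dist(\xi_1,\xi_2)$, are unchanged). The only minor point to verify is that the constants of equivalence depend only on the universal bound $\pi/4$ on the angles, not on $\t_1,\t_2,t$, which is immediate from the uniform two-sided estimate $\sin u\sim u$ on $[0,\pi/4]$.
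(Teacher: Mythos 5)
Your proof is correct and follows essentially the same route as the paper: the identity you obtain from Lemma~\ref{lem-6-2} together with the chord--angle relation $\|x-y\|=2\sin(\dist(x,y)/2)$ is exactly the quadratic form $4\sin^2\bigl(\tfrac{\t_1-\t_2}{2}t\bigr)+\sin(\t_1 t)\sin(\t_2 t)\|\xi_1-\xi_2\|^2$ that the paper derives by direct expansion, and both arguments then conclude by the uniform equivalence $\sin u\sim u$ on $[0,\pi/4]$ applied term by term.
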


For the moment, we take Lemma \ref{lem-5-3:2017}  for granted and  proceed with the proof of \eqref{sphdesign401}. Without loss of generality, we may assume that $\|w\|_\infty=1$.  Set $\Ld=\{ (j,i):\  \ 1\leq j\leq M,\  \ 1\leq i\leq k_j\}$.
It is enough to prove that for $(j, i)\neq  (j',i')\in\Ld$ and every $P\in\Pi_{n,0,w}^d$ with $\|\nabla_0 P\|_{1,2}\leq 1$,
\begin{equation}\label{separation}
  \dist(\mathbf{x}_{j,i}(P),\mathbf{x}_{j',i'}(P)) \ge c_w N^{-\f 1{d-1}}.
\end{equation}

We first prove \eqref{separation} for the case of $j=j'$ and $1\leq i\neq i'\leq k_j$.  In this case, if $\nabla_0 P(x_j)= 0$, then $\mathbf{x}_{j,i}(P)=x_{j,i}$, $\mathbf{x}_{j,i'}(P)=x_{j,i'}$, and hence by \eqref{5-6-2017},  $\dist(\mathbf{x}_{j,i}(P), \mathbf{x}_{j,i'}(P))\ge r_j \ge C N^{-\f 1{d-1}}$.  If $\nabla_0 P(x_j)\neq  0$, then
$$ \mathbf{x}_{j,i}(P) =\ga_{[x_{j,i}, z_{j,P}]} \bl( (1-\da)\min\bl\{  \va^{-1}|\nabla_0 P(x_j)|, 1\br\} \br)=\ga_{ [z_{j,P}, x_{j,i}]} (\al),$$
with
$$\al=1- (1-\da)\min\Bl\{ \f {|\nabla_0 P(x_j)|}\va, 1\Br\}  \ge \da,$$
which using Lemma \ref{lem-5-3:2017} implies
\begin{align*}
    \dist\Bl(\mathbf{x}_{j,i}(P), \mathbf{x}_{j,i'}(P)\Br)&\ge  \Bl\|\ga_{ [z_{j,P}, x_{j,i}]} (\al)-\ga_{ [z_{j,P}, x_{j,i'}]} (\al)\Br\|\\
    &\ge c\al |x_{j,i}-x_{j,i'}|
    \ge c \da r_j \ge c\da N^{-\f 1{d-1}}.
\end{align*}

Next, we  prove \eqref{separation} for the case of  $1\leq j\neq j'\leq M$. Assume that   $1\leq i\leq k_j$ and $1\leq i'\leq k_{j'}$. To prove \eqref{separation},  it suffices to show that $\dist(\mathbf{x}_{j,i}(P), \p R_j)\ge c_w r_j\ge  c N^{-\f 1{d-1}}$.
Without loss of generality, we may assume that $\|\nabla_0 P(x_j)\|>0$ since otherwise $\mathbf{x}_{j,i}(P)=x_{j,i}$ and the claim  is obvious.
We first recall that $B(x_{j,i}, r_j)\subset R_j$, and $$\mathbf{x}_{j,i}(P)=\ga_{[x_{j,i}, z_{j, P}]}(t_{j}),$$
with $t_{j}= (1-\da)\min \Bl\{ \f{\|\nabla_0 P(x_j)\|}\va, 1\Br\}.$
Hence, setting $\ta_{j,i} =\dist(x_{j,i}, z_{j,P})$, we have
$$\dist(\mathbf{x}_{j,i}(P), x_{j,i}) =t_j\dist(x_{j,i}, z_{j,P})\leq (1-\da)\t_{j,i}.$$

Let $h: R_j \to T_{x_{j,i}}$, $z\mapsto z-(z\cdot x_{j,i}) x_{j,i}$ denote the orthogonal projection onto the tangential  space $T_{x_{j,i}}:=\{ y\in\RR^d:\ \  y\cdot x_{j,i}=0\}$. According to the proof of Lemma \ref{prop-3-3},   the set  $S =h(R_j)$ is strictly convex in the space  $T_{x_{j,i}}$, $h(x_{j,i})=0$, and $\|h(z)\|=\sin (\dist(x_{j,i}, z))$ for $z\in R_j$. Furthermore,  $$ \dist(x_{j,i}, \p R_j)\ge d(0, \p S)\ge \sin (r_j),\  \ \ \dist(\mathbf{x}_{j,i}(P), \p R_j)\ge  d(h(\mathbf{x}_{j,i}(P)), \p S).$$
Write
$$\ga_{[x_{j,i}, z_{j,P}]}(t)=x_{j,i} \cos (\ta_{j,i} t) +\xi_{j,i} \sin (\ta_{j,i} t),\   \ t\in [0,1],$$
where
$$\xi_{j,i}=\f {h(z_{j,P})}{\|h(z_{j,P})\|}=\f {h(z_{j,P})}{\sin (\t_{j,i})}\in\sph\cap T_{x_{j,i}}.$$
Then,
\begin{align*}
    h\Bl(\ga_{[x_{j,i}, z_{j,P}]}(t)\Br)=\xi_{j,i} \sin (\ta_{j,i} t)=\f {h(z_{j,P})}{\sin (\t_{j,i})}\sin (\ta_{j,i} t) ,\   \ t\in [0,1].
\end{align*}
This implies that
\begin{align*}
  h(\mathbf{x}_{j,i}(P)) =  \f {h(z_{j,P})}{\sin (\t_{j,i})}\sin (\ta_{j,i} t_j)=(1-s) h(x_{j,i}) +s  h(z_{j,P}),
\end{align*}
where
$$0\leq  s=\f{\sin (\ta_{j,i} t_j)}{\sin \t_{j,i}}\leq t_j\leq 1-\da.$$
Here we used the fact that the function $\f{\sin t}t$ is decreasing on $[0,\f \pi2]$.
Since the function $d(u, \p S)$ is concave on $S$, it follows that
\begin{align*}
    \dist(\mathbf{x}_{j,i}(P), \p R_j)&\ge \dist (h(\mathbf{x}_{j,i}(P), \p S) \ge (1-s) d (h(x_{j,i}), \p S)+s\cdot d(h(z_{j,P}),\p S)\\
    &\ge (1-s) d(0, \p S) \ge \da \sin (r_j)\ge C r_j \ge C N^{-\f 1{d-1}}.
\end{align*}

\subsection{Proof of Lemma \ref{lem-5-3:2017}}

 For $t\in (0,1)$ and $\t_1, \t_2 \in (0, \f \pi 4)$,
\begin{align*}
    \|\ga_1(t)-\ga_2(t)\|^2 =&|\cos (\t_1 t)-\cos(\t_2 t)|^2 +|\xi_1 \sin (\t_1 t) -\xi_2 \sin (\t_2 t)|^2 \\
    =& 4 \sin^2 \Bl(\f {\t_1-\t_2}2 t \Br) \sin^2 \Bl(\f {\t_1+\t_2}2 t \Br)+ |\sin (\t_1 t)-\sin (\t_2 t)|^2 \\
    &+ 2\sin (\t_1 t) \sin (\t_2 t) (1-\xi_1\cdot \xi_2)\\
    =&4 \sin^2 \Bl(\f {\t_1-\t_2}2 t \Br) \sin^2 \Bl(\f {\t_1+\t_2}2 t \Br)+4 \sin^2 \Bl(\f {\t_1-\t_2}2 t \Br) \cos^2 \Bl(\f {\t_1+\t_2}2 t \Br)\\
   & + \sin (\t_1 t) \sin (\t_2 t) \|\xi_1-\xi_2\|^2\\
    =&4 \sin^2 \Bl(\f {\t_1-\t_2}2 t \Br) + \sin (\t_1 t) \sin (\t_2 t) \|\xi_1-\xi_2\|^2.
\end{align*}
Thus,
\begin{align*}
   \|\ga_1(t)-\ga_2(t)\|&\sim \Bl(|\t_1-\t_2|   +  \sqrt{\t_1 \t_2} \|\xi_1-\xi_2\|\Br)t\\
   &\sim t \|\ga_1(1)-\ga_2(1)\|.
\end{align*}

\subsection{Proof of  Corollary \ref{cor-1-2}} The upper estimate,
$$ \mathcal{N}_n (wd\s_d) \leq C \max_{x\in \sph}\f 1 {w(B(x, n^{-1}))}, $$
follows directly from Theorem \ref{sphdesign201}. It remains to show the matching lower estimate.

  Assume that   $\Ld=\{\og_j\}_{j=1}^N$ is a  set of $N$ distinct nodes on $\sph$ such that
 \begin{equation}\label{5-14:2017}\int_{\sph} f(x) w(x)\, d\s_d(x)=\f 1N \sum_{j=1}^N f(\og_j),\   \   \ \forall f\in\Pi_n^d.\end{equation}

 According to \cite[Lemma 4.6]{F_W}, given a positive integer $\ell\ge s_w +d+1$,  there exists a nonnegative algebraic  polynomial $P_n$ of degree at most $n/4$ on $[-1,1]$ such that
 \begin{equation}\label{5-15:2017}
 0\leq   P_n (\la x,  y\ra) \sim n^{d-1} (1+n \dist(x, y))^{-2\ell},\   \   \  \forall x, y\in \sph,
 \end{equation}
 where $\la x, y\ra$ denotes the dot product of $x, y\in\RR^d$.
Using \eqref{5-14:2017} and \cite[Theorem 4.2]{F_W}  we have that for $p=1, 2$,
 \begin{equation*}
   \|P_n(\la x, \cdot\ra )\|_{p, w} \sim \Bl( \f 1N \sum_{j=1}^N |P_n(x\cdot \og_j)|^p\Br)^{1/p},\   \  \  \forall x\in\sph,
 \end{equation*}
 where $\|\cdot\|_{p,w}$ denotes the Lebesgue $L^p$-norm defined with respect to the measure $w(x)\, d\s_d(x)$ on $\sph$.
   Since the norm  $\|\cdot\|_{\ell^p}$ is a decreasing function in $p>0$,   it follows that
 \begin{align}\label{5-16:2017}
   \|P_n(\la x, \cdot\ra) \|_{2,w}\leq C \sqrt{N} \|P_n(\la x, \cdot\ra)\|_{1,w},\   \  \forall x\in\sph.
 \end{align}

 Next,   by \cite[Corollary 3.4]{F_W},
 $ \|P_n(\la x, \cdot\ra)\|_{p,w} \sim \|P_n(\la x, \cdot\ra)\|_{p, w_n}$ for $1\leq p<\infty$, where $w_n(x) =n^{d-1}\int_{B(x, n^{-1})} w(y)\, d\s_d(y)$ for  $x\in\sph$ and $n=1,2,\cdots.$
 This combined with \eqref{5-15:2017}  implies that for $1\leq p<\infty$,
  \begin{equation}\label{5-17:2017}\|P_n(\la x, \cdot\ra) \|_{p, w}\sim \|P_n (\la x, \cdot\ra)\|_{p, w_n}\sim n^{d-1}(n^{-d+1}w_n(x))^{\f 1p}.\end{equation}
 Thus, using \eqref{5-17:2017} and \eqref{5-16:2017} , we deduce that for any $x\in\sph$,
 \begin{align*}
   (n^{-d+1}w_n(x))^{-\f12}\sim \f {\|P_n(\la x, \cdot\ra)\|_{2,w}}{\|P_n(\la x, \cdot\ra)\|_{1,w}} \leq C \sqrt{N}.
 \end{align*}
 It follows that
 $$ N\ge c ( n^{-d+1} w_n(x))^{-1}=\f c {w(B(x, n^{-1}))},\  \   \  \forall x\in\sph.$$
\section*{Acknowledgement}

 We would like to express  sincere gratitude to Professor Ron Peled from the Tel Aviv University for  kindly pointing out to us several   very helpful  references on Chebyshev-type cubature formulas.

\end{document}